\documentclass[12pt]{article}

\usepackage{fullpage,amsfonts,amsmath,amsthm,amssymb,mathrsfs,graphicx,upgreek}
\usepackage{enumitem}

\usepackage{bm}
\usepackage{bbm}
\usepackage{xcolor}
\usepackage{xcolor}
\usepackage[normalem]{ulem}
\usepackage{verbatim,url}
\usepackage{graphicx,hyperref,enumitem}
\usepackage[noadjust,sort]{cite}

\usepackage[a4paper, top=2.54cm, bottom=2.54cm, left=2.54cm, right=2.54 cm]{geometry}

    % displays labels
% \newcommand{\lab}[1]{\label{#1}}                % hides labels

\newtheorem{theorem}{Theorem}[section]
\newtheorem{corollary}[theorem]{Corollary}
\newtheorem{conjecture}[theorem]{Conjecture}
\newtheorem{lemma}[theorem]{Lemma}

\theoremstyle{definition}
\newtheorem{remark}[theorem]{Remark}

%Angus' macro
%symbols

\newcommand{\tr}{{\rm{tr}}}

% Fix bad spacing of \left and \right
\let\originalleft\left
\let\originalright\right
\renewcommand{\left}{\mathopen{}\mathclose\bgroup\originalleft}
\renewcommand{\right}{\aftergroup\egroup\originalright}

%Probability
%\newcommand{\prob}{\mathbb{P}}

%Expectation
\newcommand{\expec}{\mathbb{E}}
\newcommand{\Exp}[1]{\expec[#1]}

%Variance

\newcommand{\CovA}[1]{{\mbox{Cov}\left(#1\right)}}

%Probabilistic limits

%extra notation stuff: the ratio of d and t 
\newcommand{\drat}{\lambda}

\parskip 5pt plus 2pt minus 1pt
\linespread{1.3}

%End of Angus'macro

%  Special symbols

\def\G{{\boldsymbol{G}}}

\def\Pr{\operatorname{\mathbb P}}

   % BDM

\def\GoodS{S_{\rm good}}
\def\GoodT{T_{\rm good}}

%  Formatting

%\def\mi#1{\textcolor{Purple}{\textbf{[MI comments:} #1\textbf{]}}}
%\def\red#1{\textcolor{red}{#1}}
%\def\blue#1{\textcolor{blue}{#1}}

% Miscellaneous maths
\def\abs#1{\lvert#1\rvert} 
\def\Abs#1{\bigl\lvert#1\bigr\rvert}

\def\dfrac#1#2{\lower0.15ex\hbox{\large$\textstyle\frac{#1}{#2}$}}
\def\Dfrac#1#2{\raise0.05ex\hbox{\small$\displaystyle\frac{#1}{#2}$}}
\def\({\bigl(}
\def\){\bigr)}
\def\st{\mathrel{:}}

\def\tr{\operatorname{tr}}
\def\nicebreak{\vskip 0pt plus 50pt\penalty-300\vskip 0pt plus -50pt }

% Bold and fancy

\def\X{\boldsymbol{X}}

\def\calG{\mathcal{G}}

\def\calS{\mathcal{S}}
\def\calR{\mathcal{R}}

\def\calS{\mathcal{S}}

\def\onevec{\boldsymbol{1}}

\def\hvec{\boldsymbol{h}}

\def\xvec{\boldsymbol{x}}

\def\betavec{\boldsymbol{\beta}}

\def\trans{^{\mathrm{T}}\!}

% Various
\def\E{\operatorname{\mathbb{E}}}

\def\Reals{{\mathbb{R}}}

\def\Naturals{{\mathbb{N}}}

% Slanted <= and >= look better
\let\le=\leqslant
\let\leq=\leqslant
\let\ge=\geqslant
\let\geq=\geqslant

%Equation numbering
\numberwithin{equation}{section}

\def\dfrac#1#2{\lower0.15ex\hbox{\large$\textstyle\frac{#1}{#2}$}}
\def\({\bigl(}
\def\){\bigr)}

\def\Naturals{{\mathbb{N}}}
\let\eps\varepsilon

%Equation numbering
\numberwithin{equation}{section}

\title{Sprinkling with random regular graphs}
\author{
	Mikhail Isaev\thanks{Research supported by ARC DP220103074.}\\
	Monash University\\
	\tt mikhail.isaev@monash.edu \and
	Brendan D. McKay\\
	Australian National University\\
	\tt brendan.mckay@anu.edu.au
	\and
	Angus Southwell\\
	Quantum for NSW\\
	\tt angusjsouthwell@gmail.com
	\and 
	Maksim Zhukovskii\\
	The University of Sheffield\\
	\tt m.zhukovskii@sheffield.ac.uk
}

\date{}

\begin{document}
	\maketitle
	
	\begin{abstract}
	We conjecture that the distribution of the edge-disjoint union of two random regular graphs on the same vertex set is asymptotically equivalent to a random regular graph of the combined degree, provided  it  grows as the number of vertices tends to infinity.
We verify this conjecture for the cases when the graphs are sufficiently dense or sparse. We also prove an asymptotic formula for the expected number of spanning regular subgraphs  in a random regular graph.
	\end{abstract}

%%                                                               %%
 
%%%%%%%%%%%%%%%%%%%%%%%%%%%%%%%%%%%%%%%%%%%%%%%%%%%%%%%%%%%%%%%%%%%
%%                                                               %%
%% No macro definitions below this line please!                  %%
%%                                                               %%
%%%%%%%%%%%%%%%%%%%%%%%%%%%%%%%%%%%%%%%%%%%%%%%%%%%%%%%%%%%%%%%%%%%

\section{Introduction}\label{S:intro}

The proof technique known as \emph{sprinkling} has been used since the earliest
days of random graph theory~\cite{AKS1982,ER1960}.
This technique is also known under the name \emph{multiple-round exposure}  
as it 
gradually exposes the edges of a random graph in rounds to achieve a desired graph property through additional last-minute randomisation.  

Sprinkling  has numerous applications in the study of monotone properties of random graphs such as the appearance of giant and dominant components, see~\cite{AKS1982, BCHSS2006} and~\cite[Section~11.9]{AS2016}, 
the existence of a Hamiltonian cycle~\cite[Section~8.2]{Bollobas2001}, the square of a Hamiltonian cycle~\cite{KNP2021}
and  also Ramsey properties, see~\cite{RR1998} and \cite{FRS2010}. Sprinkling is  a crucial proof technique for non-monotone properties related to
extremal subgraphs~\cite[Theorem 1.4]{BPKS2012} and extension counts \cite{Spencer1990}.    
Furthermore, this technique was used to establish the Sunflower Lemma~\cite[Section 2.1]{AlWZ2021}, the Spread Lemma~\cite[Section 2.4]{MNSZ2022}, and also appeared in the proof of recently resolved Kahn--Kalai conjecture \cite{PP2022}.

To our knowledge, the fruitful idea of  sprinkling is undeveloped in the study of the properties
of 
the random regular graph model $\calG(n,d)$. Recall that in this model the graph is taken uniformly from the set of labelled $d$-regular graphs on the vertex set $[n]:=\{1,\ldots,n\}$. If $\min\{d, n-d\}\rightarrow \infty$ we believe that sprinkling is possible in the following sense: 
%whp\footnote{With high probability, that is, with probability tending to 1 as $n$ tends to infinity.} 
with probability tending to 1, 
a uniform random regular graph can be split into a union of two edge-disjoint uniform random regular graphs.

To be more formal, given $d_1,d_2  \in \Naturals:=\{0,1,\ldots\}$, let $\calS_{n}(d_1,d_2)$ denote the set of pairs of edge-disjoint graphs $(G_1,G_2)$ on the same vertex set $[n]$ such that $G_1$ is a $d_1$-regular graph and $G_2$ is a $d_2$-regular graph. Throughout the paper, all asymptotic statements are with respect to $n\rightarrow \infty$ and we always implicitly assume that $nd_1$ and $nd_2$  are even to ensure the 
existence of regular graphs.

\begin{conjecture}\label{con1}
Let $d_1=d_1(n)\in \Naturals$ and $d_2=d_2(n)\in \Naturals$ satisfy  
\[	
d_1+d_2 \leq n-1, \qquad
\min\{d_1 + d_2,n-d_1, n-d_2\}  \rightarrow \infty.
\]
Let $(\G_1,\G_2)$ be a uniform random element of $\calS_n (d_1,d_2)$. Then, there is a coupling 
$
(\G_1,\G_2,\G_{d_1},\G_{d_2},\G_{d_1+d_2})
$
such that
\[
\Pr (\G_{d_1 } = \G_1,  \G_{d_2 } = \G_2, \G_{d_1 +d_2} = \G_1\cup \G_2) = 1-o(1)
\]
and  $\G_{d_1} \sim \calG(n,d_1)$, $\G_{d_2} \sim \calG(n,d_2)$, $\G_{d_1+d_2} \sim \calG(n,d_1+d_2)$.
\end{conjecture}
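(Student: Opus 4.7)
The plan is to reduce the joint coupling statement to three total variation bounds between marginals. For any three target distributions $\nu_1,\nu_2,\nu_3$, one can fix the source triple $(X_1,X_2,X_3) = (\G_1, \G_2, \G_1\cup\G_2)$ and, independently for each coordinate, apply a maximal coupling to obtain $Y_i$ with law $\nu_i$; a union bound then gives
\[
\Pr(Y_1=X_1,\, Y_2=X_2,\, Y_3=X_3) \;\ge\; 1 - \sum_{i=1}^{3} d_{\rm TV}\bigl(\mathcal{L}(X_i), \nu_i\bigr).
\]
So it suffices to show that each of the three marginals of the uniform element of $\calS_n(d_1,d_2)$ is within total variation distance $o(1)$ of the corresponding $\calG(n,\cdot)$.

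The most informative of the three is the union. Let $M(H)$ denote the number of spanning $d_1$-regular subgraphs of a $(d_1+d_2)$-regular graph $H$; equivalently, $M(H)$ counts ordered edge-disjoint decompositions of $H$ into a $d_1$-regular and a $d_2$-regular part. The marginal distribution of $\G_1\cup\G_2$ assigns to each $(d_1+d_2)$-regular graph $H$ a probability proportional to $M(H)$, so a short calculation yields
\[
d_{\rm TV}\bigl(\mathcal{L}(\G_1\cup\G_2),\, \calG(n,d_1+d_2)\bigr) \;=\; \tfrac{1}{2}\,\expec_{\Hyp\sim\calG(n,d_1+d_2)}\!\left[\,\left|\frac{M(\Hyp)}{\expec[M(\Hyp)]}-1\right|\,\right].
\]
Thus the problem reduces to concentration of $M(\Hyp)$ about its mean. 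An identical reduction, with $M(H)$ replaced by the number $N(G_1)$ of $d_2$-regular graphs edge-disjoint from a fixed $d_1$-regular $G_1$, controls the marginals of $\G_1$ and (symmetrically) $\G_2$.

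The main technical step---and the asymptotic formula advertised in the abstract---is a sharp estimate of $\expec[M(\Hyp)]$, together with a matching second moment bound $\expec[M(\Hyp)^2] = (1+o(1))\,\expec[M(\Hyp)]^2$ (or a direct concentration argument for $M$). In the dense regime I would attack both ingredients by complex-analytic saddle-point enumeration of regular graphs with a prescribed forbidden edge set (namely the non-edges of $H$), extending the McKay--Wormald framework: the resulting asymptotic formula depends on $H$ only through mild aggregate statistics that are themselves concentrated under $\calG(n,d_1+d_2)$. In the sparse regime the natural tool is the configuration model together with switchings to enumerate $d_1$-regular subgraphs of $H$. The hard part---and the likely reason the conjecture is verified only at the two extremes---is making these asymptotic expansions uniform across the full range $\min\{d_1+d_2, n-d_1, n-d_2\}\to\infty$: the saddle-point and switching techniques have never been smoothly joined through the intermediate regime where $d_1+d_2$ is a small positive power of $n$. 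Extracting concentration (not merely the mean) with enough uniformity in $H$ to drive the total variation integral to zero is the single most demanding ingredient.
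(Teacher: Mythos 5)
Your plan is essentially the paper's plan, with one pleasant simplification in the coupling machinery. The paper also reduces the joint five-graph coupling to three ``marginal $\approx$ uniform'' statements: it passes to the triple of degrees $(d_1,d_2,d_3)$ with $d_3 := n-1-d_1-d_2$ and shows, via complements, that it suffices to couple each of $\G_1\cup\G_2$, $K_n\setminus\G_1$, $K_n\setminus\G_2$ with the corresponding $\calG(n,\cdot)$. For each, the Reduction Lemma (Lemma~\ref{l:reduction}) converts the coupling to concentration of $|\calR_h(\G_d)|$ around its mean for an appropriate pair $(h,d)$, which is precisely your $M(\Hyp)$. Your observation that the union's marginal on $\calR_{d_1+d_2}(K_n)$ is proportional to $M(H)$, and the $L^1$/TV identity that follows, is exactly the content underlying the paper's reduction (the one-sided condition in Lemma~\ref{l:reduction} is equivalent to the $L^1$ condition because $M/\E M$ is nonnegative with mean~$1$).

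Where the routes differ: you obtain the joint coupling by applying three independent maximal couplings to $(\G_1,\G_2,\G_1\cup\G_2)$ and union-bounding over the TV distances, whereas the paper formulates each marginal coupling through Koperberg's Strassen-with-deficiency theorem (Theorem~\ref{T:couplingNew}), stated as a Hall-type condition on a bipartite graph with ``good-degree'' classes (Corollary~\ref{T:coupling}). Your maximal-coupling argument is shorter and fully correct for this problem, because every element of $\calS_n(d_1,d_2)$ has degree exactly~$1$ in the relevant bipartite graph, so the bipartite-graph coupling collapses to a TV bound. The paper's more elaborate Corollary~\ref{T:coupling} is retained because it is of independent interest (it is cited and reused elsewhere) and because it yields both directions of the equivalence in the Reduction Lemma, whereas you only need the sufficiency direction here.

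Your identification of the remaining work is also accurate. The main technical input is $\E[M(\Hyp)]$ plus concentration; in the dense regime the paper uses the analytic subgraph-enumeration formula of \cite{GIM2022} (Theorem~\ref{thm:jane-misha-brendan}) together with concentration of codegrees (Lemma~\ref{lem:MMcommon}) rather than a second moment, while in the mixed dense--sparse regime it uses a genuine second-moment bound driven by switching estimates (Section~\ref{S:common-sparse}). You are right that the two toolkits have not been joined across the intermediate range; this is exactly why Theorem~\ref{T:main1} covers only the three stated cases and Conjecture~\ref{con1} remains open in general. In short: your proposal is a correct, slightly streamlined account of the paper's strategy, and, like the paper, it cannot (and does not claim to) close the conjecture in full generality.
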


Note  that  $\G_{1}\cup\G_2$  defined in Conjecture \ref{con1} has the same distribution as the   random graph    $\G_{\oplus} \sim \calG(n,d_1)\oplus\calG(n,d_2)$      obtained by	sampling independently random regular graphs $\G_{d_1}\sim \calG(n,d_1)$ and $\G_{d_2}\sim \calG(n,d_2)$  
conditioned on the event that $\G_{d_1}$ and $\G_{d_2}$  are disjoint.
When $d_1$ and $d_2$ are both fixed positive integers and
$(d_1,d_2)\neq (1,1)$, it is known that the random graph model 
$\calG(n,d_1)\oplus\calG(n,d_2)$   is contiguous to $\calG(n,d_1+d_2) $;
see  \cite[Section~4]{Janson1995} and \cite[Section~4.3]{Wormald1999}. Recall that ``contiguity'' means that if events have vanishing probabilities in one model as $n\rightarrow \infty$ then they also do in the other model.
Our second conjecture is that if $d_1+d_2 \rightarrow \infty $ then these models have a much stronger relationship: $\calG(n,d_1)\oplus\calG(n,d_2)$  is asymptotically equivalent to  $\calG(n,d_1+d_2)$.

\begin{conjecture}\label{con11}
If    $d_1+d_2 \leq n-1$ and   $d_1 + d_2 \rightarrow \infty $ then there is a coupling $(\G_{\oplus},\G_{d_1+d_2})$
such that $\G_{\oplus}\sim \calG(n,d_1) \oplus \calG(n,d_2)$, $\G_{d_1+d_2}\sim \calG(n,d_1+d_2)$, and 
\[
\Pr(\G_{\oplus} = \G_{d_1+d_2}) = 1-o(1).
\]
\end{conjecture}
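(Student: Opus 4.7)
The strategy is to prove $d_{TV}(\G_\oplus, \G_{d_1+d_2}) = o(1)$; the desired coupling achieving $\Pr(\G_\oplus = \G_{d_1+d_2}) = 1-o(1)$ then follows from the maximal coupling construction.

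Set $d := d_1 + d_2$. For a $d$-regular graph $G$ on $[n]$, let $N(G)$ be the number of $d_1$-regular spanning subgraphs of $G$; each such $H_1$ gives a unique decomposition $(H_1, G \setminus H_1) \in \calS_n(d_1, d_2)$. Writing $\mu := \E N(\G_d)$ for $\G_d \sim \calG(n,d)$, and using $|\calS_n(d_1, d_2)| = |\calG(n,d)| \cdot \mu$, a direct calculation shows
\[
d_{TV}(\G_\oplus, \G_{d_1+d_2}) = \frac{1}{2}\,\E\left[\left|\frac{N(\G_d)}{\mu} - 1\right|\right].
\]
By Chebyshev's inequality, it therefore suffices to establish $\Var(N(\G_d)) = o(\mu^2)$, i.e.\ $\E[N(\G_d)^2] = (1+o(1))\mu^2$.

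The first moment $\mu$ is exactly the expected number of $d_1$-regular spanning subgraphs of a random $d$-regular graph, whose asymptotics is the formula advertised in the abstract. For the second moment, $\E[N(\G_d)^2]$ equals the sum over ordered pairs $(H_1, H_1')$ of $d_1$-regular graphs on $[n]$ of $\Pr(H_1 \cup H_1' \subseteq \G_d)$; this probability depends only on the labelled union graph $H_1 \cup H_1'$. The natural plan is to stratify pairs by the structure of the overlap $H_1 \cap H_1'$ (in particular by its degree sequence and number of edges) and evaluate each stratum via enumeration formulas for $d$-regular graphs containing a prescribed spanning subgraph. The stratum corresponding to ``typical'' nearly-independent pairs should give $(1+o(1))\mu^2$, matching the required leading term, and the remaining strata must be shown to be of lower order.

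The main obstacle is making this second-moment calculation precise: it requires sharp asymptotics for the number of $d$-regular graphs on $[n]$ containing a prescribed spanning subgraph, uniformly over the range of overlap types that contribute nontrivially. These asymptotics are delicate. In the dense regime they are naturally furnished by complex-analytic and permanent-type estimates for regular graph counts, while in the sparse regime the configuration model together with switching arguments provide the right tool. The two toolboxes cover different portions of parameter space and do not obviously merge, which is presumably why the paper resolves Conjecture~\ref{con11} in the dense and sparse extremes but leaves the intermediate density range open.
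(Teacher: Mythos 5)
Note first that the statement is a conjecture; the paper does not prove it in general, but reduces it via the Reduction Lemma (Lemma~\ref{l:reduction}) to lower-tail concentration of $N(\G_d) := |\calR_{d_1}(\G_{d_1+d_2})|$, then verifies that concentration in the special cases of Theorem~\ref{T:main1}. Your route via total variation recovers the same reduction by a different argument: the formula $d_{TV}(\G_\oplus,\G_{d_1+d_2})=\tfrac{1}{2}\E\bigl|N(\G_d)/\mu - 1\bigr|$ is correct, and since $\E[N(\G_d)/\mu]=1$ it equals $\E[(1-N(\G_d)/\mu)_+]$, which vanishes iff $\Pr(N(\G_d)<(1-\eps)\mu)\to 0$ for every fixed $\eps>0$---exactly the condition in Lemma~\ref{l:reduction}, which the paper derives instead from Strassen's theorem with deficiency (Theorem~\ref{T:couplingNew} and Corollary~\ref{T:coupling}). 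Your next step, invoking Chebyshev and requiring $\Var(N(\G_d))=o(\mu^2)$, is sufficient but strictly stronger than needed, and the gap matters in practice: the reduction demands only one-sided (lower-tail) concentration, and the paper's dense case (Theorem~\ref{T:main1}(ii)) never estimates a second moment at all---it applies Corollary~\ref{cor-jmb} to show $N(\G_d)=(1+o(1))\mu$ pointwise on a high-probability event. The second-moment plan you sketch, stratified by the overlap $H_1\cap H_2^*$, is precisely what the paper executes for the sparse case (Theorem~\ref{T:main1}(iii)) using the switching Lemmas~\ref{overlap} and~\ref{lem:common}. Your closing diagnosis is accurate: the subgraph-count estimates for prescribed spanning subgraphs cover the dense and sparse extremes but do not currently meet in the middle, which is why the intermediate density range remains open.
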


In this paper, we confirm both conjectures   for the cases when degrees $d_1,d_2$ are sufficiently sparse or sufficiently dense.
Everywhere in the paper
all logarithms are   natural logarithms.
\begin{theorem}\label{T:main1}
Conjecture \ref{con1} and Conjecture  \ref{con11}  hold  in the following three cases:
\begin{itemize}\itemsep=0pt
\item[(i)] $d_1 =1$, $d_2 \rightarrow \infty$, and $d_2 = o(n^{1/3})$;
\item[(ii)] $\min\{d_1,d_2, n-d_1 -d_2\} = \omega\Bigl(\Dfrac{n}{\log n}\Bigr)$;
\item[(iii)] $d_1^2\Bigl(\Dfrac{n}{\min\{d_2,n-d_2\}}\Bigr)^{\!2d_1}\leq\dfrac{1}{3}
\log n$.  %\mi{New condition on $d_1$ and $d_2$}
%and 
% $n-d_2=  \omega\left(\dfrac{n}{\log n}\right)$;
\end{itemize}
\end{theorem}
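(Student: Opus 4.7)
My plan is to reduce both conjectures to the second-moment estimate
$$
\E[N(\G_{d_1+d_2})^{2}]=(1+o(1))\,(\E[N(\G_{d_1+d_2})])^{2},
$$
where $N(G)$ counts the $d_1$-regular spanning subgraphs of $G$, and then to establish this in each of the three regimes, using the asymptotic formula for $\E[N(\G_{d_1+d_2})]$ that this paper also proves (as announced in the abstract).

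\textbf{Step 1 (Reduction to a second-moment estimate).} Set $N^*:=\E[N(\G_{d_1+d_2})]$. A double-counting identity gives $|\calS_n(d_1,d_2)|=N^*\cdot|\calG(n,d_1+d_2)|$, so if $(\G_1,\G_2)$ is uniform on $\calS_n(d_1,d_2)$ then $\G_1\cup\G_2$ assigns mass $N(G)/(N^*\,|\calG(n,d_1+d_2)|)$ to each $(d_1+d_2)$-regular graph $G$. Hence its total variation distance from $\calG(n,d_1+d_2)$ equals $\tfrac12\,\E\,|N(\G_{d_1+d_2})/N^*-1|$, and by Cauchy--Schwarz this is $o(1)$ as soon as $\E[N^2]/(N^*)^2=1+o(1)$. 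Applying a maximal coupling yields Conjecture \ref{con11}. For Conjecture \ref{con1} we additionally need the marginals of $\G_1$ and $\G_2$ to match $\calG(n,d_1)$ and $\calG(n,d_2)$; by the same double-counting argument this reduces to the analogous second-moment statement for $M_j(H)$, the number of $d_{3-j}$-regular graphs on $[n]$ edge-disjoint from $H$, taken under $H\sim\calG(n,d_j)$. The three couplings (for $\G_{d_1+d_2}$, $\G_{d_1}$, $\G_{d_2}$) then combine by a union bound.

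\textbf{Step 2 (Per-regime second-moment estimates).} The second moment $\E[N^2]$ counts ordered pairs $(H,H')$ of $d_1$-regular subgraphs of $\G_{d_1+d_2}$; grouping by the intersection graph $H\cap H'$ reduces the calculation to enumerating $(d_1+d_2)$-regular graphs that contain a prescribed sparse subgraph, for which asymptotic formulas are available. In regime~(ii), where all of $d_1,d_2,n-d_1-d_2$ are $\omega(n/\log n)$, I plug in the sharp McKay--Wormald type enumeration asymptotic for regular graphs of given degree sequence and verify that the error terms are summable over intersection sizes. In regime~(iii) the hypothesis $d_1^2\bigl(n/\min\{d_2,n-d_2\}\bigr)^{2d_1}\le\tfrac13\log n$ is calibrated precisely so that the contribution to $\E[N^2]$ from pairs $(H,H')$ sharing $k$ edges decays geometrically in $k$ and the $k=0$ term dominates. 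In regime~(i), where $d_1=1$, the quantity $N(G)$ is the number of perfect matchings in $G$, and I invoke known asymptotic formulas and concentration results for perfect matchings in sparse random regular graphs valid for $d=o(n^{1/3})$. Analogous computations handle $M_j$.

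\textbf{Principal obstacle.} The crux is Step~2: each regime demands a second-moment estimate whose leading terms cancel to yield the ratio $1+o(1)$, not merely an $O(1)$ bound. In regime~(ii) this needs the McKay--Wormald saddle-point asymptotic to be sharp enough to control all intersection patterns uniformly, which is the heaviest technical input. In regime~(iii) the threshold $\tfrac13\log n$ is exactly the value at which higher-order intersection contributions cease to dominate, and navigating this calibration is delicate. For regime~(i) the constraint $d_2=o(n^{1/3})$ mirrors the limits of the available perfect-matching concentration tools. Tightening all three arguments so that we obtain $L^1$ convergence of $N/N^*$ to $1$, rather than merely convergence in probability, is the central technical hurdle of the proof.
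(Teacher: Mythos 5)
Your Step~1 reduction (total variation $=\tfrac12\,\E|N/N^*-1|$, then Cauchy--Schwarz, then a maximal coupling) is a correct and somewhat more elementary route to Conjecture~\ref{con11} than the paper's, which instead establishes Lemma~\ref{l:reduction} via a Strassen-with-deficiency/Hall-type coupling criterion (Theorem~\ref{T:couplingNew}, Corollary~\ref{T:coupling}). Note, though, that your reduction needs $L^1$ concentration and hence a genuine second-moment bound, whereas the paper's Lemma~\ref{l:reduction} shows an \emph{iff}: lower-tail concentration in probability is enough. This matters because the paper never proves a second-moment estimate in regime~(ii); your plan commits you to one there, and that is where the gap opens.

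The gap is Step~2 in regime~(ii). You propose to group pairs $(H,H')$ of $d_1$-regular subgraphs by their intersection and then enumerate $(d_1{+}d_2)$-regular graphs ``containing a prescribed sparse subgraph'' via McKay--Wormald-type asymptotics. When $d_1=\omega(n/\log n)$ this decomposition breaks down: two random $d_1$-regular graphs inside $\G_{d_1+d_2}$ typically share on the order of $d_1^2/(2(d_1{+}d_2))$ edges per vertex, so $H\cap H'$ is itself a dense graph, and $H\cup H'$ has a non-regular degree sequence of order~$d_1$. Neither McKay--Wormald enumeration of regular graphs nor the sparse-subgraph probability formula (Theorem~\ref{th:GIM}, valid only when the degrees of the prescribed subgraph are $O(n^{\eps})$) applies. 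The paper avoids the second moment entirely in this regime: it uses the enumeration theorem for \emph{subgraphs of a fixed dense graph} (Theorem~\ref{thm:jane-misha-brendan}, via Corollary~\ref{cor-jmb}) to show that $|\calR_{d_1}(G)|$ is, up to $1+o(1)$, a function only of $\det Q(G)$ and the co-degree profile of~$G$, then shows (Lemma~\ref{lem:MMcommon}) that those quantities are themselves concentrated in $\calG(n,d_1{+}d_2)$, yielding pointwise concentration of $|\calR_{d_1}(\G_{d_1+d_2})|$ directly.

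Regimes~(i) and~(iii) are broadly on the right track. In~(i) the paper indeed uses Chebyshev via Gao's second-moment estimate for perfect matchings, and the complement trick reduces the $\G_{d_1}$- and $\G_{d_2}$-marginal concentrations to McKay's enumeration of matchings avoiding a fixed sparse graph (Theorem~\ref{M1985}), which is essentially deterministic; your ``analogous computations handle $M_j$'' glosses over this but is compatible with it. In~(iii) your ``the $k=0$ term dominates'' is not accurate: the sum in the second moment is a Poisson-profile sum with mean around $d_1^2(n-1)/(2d)$, which is $\omega(1)$, and the $1+o(1)$ emerges from a cancellation between that exponential factor and the prefactor $\exp(-(n{-}1{-}d)d_1^2/(2d))$; the hypothesis on $d_1$ is what controls the tail $m\gtrsim\log n$ via a switching bound (Lemma~\ref{lem:common}). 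So the calibration is real, but it governs the tail, not the dominance of $k=0$.
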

In particular, condition (iii) of Theorem \ref{T:main1}  holds in the case when $d_1=o(\log\log n)$ and $\min\{d_2,n-d_2\}= \Theta(n)$. For smaller $d_1$, we can have sublinear $d_2$ or $n-d_2$.

Theorem~\ref{T:main1}(i)  follows 
by combining the estimates of \cite{Gao2023,McKay1985} and the  Reduction lemma stated below as Lemma \ref{l:reduction}; see Section \ref{S:Thm_i}  for details.
The proofs of parts (ii) and (iii) of Theorem \ref{T:main1}  are less straightforward 
and are given in Section~\ref{S3} and 
Section~\ref{S:dense-sparse}, respectively.

%
%\begin{remark}\label{Rem1}
%	Considering graph complements, one can see that all parts (a), (b), and (c) of Conjecture \ref{con1} are 
%	equivalent. Furthermore, Conjecture \ref{con1}(a) can be restated as follows:
%	if $d = d_1+d_2\rightarrow \infty $ then $\G_d \sim \calG(n,d)$ is distributed asymptotically equivalently
%	to the random graph $\G_{d_1} \cup \G_{d_2}$ obtained by 
%	sampling $\G_{d_1}\sim \calG(n,d_1)$ and $\G_{d_2}\sim \calG(n,d_2)$ independently 
%	conditioned on the event that $\G_{d_1}$ and $\G_{d_2}$ are disjoint.
%\end{remark}

\begin{remark}
It might seem  that Conjecture \ref{con11} follows from from Conjecture \ref{con1} by combining  couplings 
$(\G_{1}, \G_{d_1})$, $(\G_{2}, \G_{d_2})$, and $(\G_{1}\cup\G_2,\G_{d_1+d_2})$. This is the case under assumptions of   Theorem \ref{T:main1}. However, in general, this is actually other way around.  
Note that unlike the first conjecture, 
Conjecture \ref{con11}   requires neither  $n-d_1\rightarrow \infty $ nor $n-d_2 \rightarrow \infty$. One can derive 
Conjecture \ref{con1}  from Conjecture \ref{con11} by exchanging the roles of
$(d_1,d_2,n-1-d_1-d_2)$ using the complements of the respective graphs.
\end{remark}

For a graph $G$ and $h\in\Naturals$, let
\[
\calR_{h}(G):= \text{the set of $h$-regular spanning subgraphs of $G$}.
\]
%For $\eps>0$, consider three families of events: 
%\begin{align*}
%	\mathcal{E}_a(\eps)&:= \bigl\{|\calR_{d_1}(\G_{d_1+d_2})| \leq (1-\eps) \E |\calR_{d_1}(\G_{d_1+d_2})|\bigr\}, \\
%	\mathcal{E}_b(\eps)&:=
%	\bigl\{|\calR_{d_2}(K_n \setminus \G_{d_1})| \leq (1-\eps) \E |\calR_{d_2}(K_n \setminus \G_{d_1})|\bigr\}, \\
%	\mathcal{E}_c(\eps)&:=\bigl\{|\calR_{d_1}(K_n \setminus \G_{d_2})| \leq (1-\eps) \E |\calR_{d_1}(K_n \setminus 	\G_{d_2})|\bigr\},
%\end{align*}
%where, as before, 
%$\G_{d_1}\sim \calG(n,d_1)$, $\G_{d_2}\sim \calG(n,d_2)$, and $\G_{d_1+d_2} \sim \calG(n,d_1+d_2)$.
%
%\begin{lemma}[Reduction lemma]\label{l:reduction}
%	Let $x\in\{a,b,c\}$. Conjecture \ref{con1}(x) holds if and only if 
%	$\Pr(\mathcal{E}_x(\eps)) \rightarrow 0$
%	for any fixed $\eps>0$. 
%\end{lemma}
\begin{lemma}[Reduction lemma]\label{l:reduction}
Conjecture \ref{con11}  holds if and only if,	for any fixed $\eps>0$,
\[
\Pr\Big(|\calR_{d_1}(\G_{d_1+d_2})| \leq (1-\eps) \E |\calR_{d_1}(\G_{d_1+d_2})|\Big)\rightarrow 0.
\]
\end{lemma}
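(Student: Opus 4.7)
The plan is to identify the Radon--Nikodym derivative of $\G_\oplus$ with respect to $\G_{d_1+d_2}$, rewrite the total variation distance between the two laws as a one-sided expectation, and then translate that expression into the concentration condition on $|\calR_{d_1}(\G_{d_1+d_2})|$. Since a coupling with $\Pr(\G_\oplus=\G_{d_1+d_2})=1-o(1)$ exists if and only if $d_{TV}(\G_\oplus,\G_{d_1+d_2})\to 0$ (by the standard maximal coupling theorem), Conjecture~\ref{con11} is equivalent to the vanishing of this total variation.

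First, I would compute $\Pr(\G_\oplus = G)$ for each $G\in \calG(n,d_1+d_2)$. The ordered pair $(\G_{d_1},\G_{d_2})$ sampled from $\calG(n,d_1)\oplus \calG(n,d_2)$ is uniformly distributed on $\calS_n(d_1,d_2)$, and the number of such pairs whose union equals $G$ is precisely $|\calR_{d_1}(G)|$; hence
\[
\Pr(\G_\oplus = G) = \frac{|\calR_{d_1}(G)|}{|\calS_n(d_1,d_2)|}.
\]
Double counting yields $|\calS_n(d_1,d_2)|=|\calG(n,d_1+d_2)|\cdot \E|\calR_{d_1}(\G_{d_1+d_2})|$, so the density of $\G_\oplus$ relative to $\G_{d_1+d_2}$ is
\[
X(G):=\frac{|\calR_{d_1}(G)|}{\E|\calR_{d_1}(\G_{d_1+d_2})|},
\]
a nonnegative function with $\E X(\G_{d_1+d_2})=1$.

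Next, using the identity $d_{TV}(\mu,\nu)=\E_\nu\bigl[(1-d\mu/d\nu)_+\bigr]$, I would write
\[
d_{TV}(\G_\oplus,\G_{d_1+d_2})=\E\big[(1-X(\G_{d_1+d_2}))_+\big].
\]
Let $Y:=(1-X(\G_{d_1+d_2}))_+\in[0,1]$. By Markov, $\E Y\geq \varepsilon\,\Pr(Y\geq \varepsilon)=\varepsilon\,\Pr(X(\G_{d_1+d_2})\leq 1-\varepsilon)$, so $d_{TV}\to 0$ forces $\Pr(X(\G_{d_1+d_2})\leq 1-\varepsilon)\to 0$ for every fixed $\varepsilon>0$. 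Conversely, if this probability tends to zero for every $\varepsilon>0$, then $Y\to 0$ in probability, and since $Y\leq 1$ bounded convergence gives $\E Y\to 0$. This is exactly the desired equivalence.

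The only conceptually delicate step is the first one --- verifying that the pair drawn from $\calG(n,d_1)\oplus\calG(n,d_2)$ is uniform on $\calS_n(d_1,d_2)$, which follows in one line from the independence of $\G_{d_1}$ and $\G_{d_2}$ and the definition of conditioning on disjointness. Everything that follows is routine bookkeeping. I would emphasise that the one-sided form of the lemma's condition (namely $1-\varepsilon$ rather than $1+\varepsilon$) is not accidental: it reflects precisely the one-sided quantity $(1-X)_+$ in the TV representation, so no upper-tail control on $|\calR_{d_1}(\G_{d_1+d_2})|$ is needed.
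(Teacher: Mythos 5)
Your proof is correct and takes a genuinely different route from the paper. You reduce everything to the total variation distance between the laws of $\G_\oplus$ and $\G_{d_1+d_2}$ (which live in the same finite space, namely $(d_1+d_2)$-regular graphs on $[n]$), compute the Radon--Nikodym derivative explicitly as $X(G)=|\calR_{d_1}(G)|/\E|\calR_{d_1}(\G_{d_1+d_2})|$, and invoke the maximal-coupling characterisation $\min_{\text{couplings}}\Pr(\G_\oplus\neq\G_{d_1+d_2})=d_{TV}$. The two directions then follow from Markov and bounded convergence applied to $Y=(1-X)_+\in[0,1]$, and your remark about the one-sidedness of the condition is exactly right. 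The paper instead encodes the constraint ``$G_1\cup G_2=G$'' as a bipartite graph $D$ between $S=\calR_{d_1+d_2}(K_n)$ and $T=\calS_n(d_1,d_2)$ and applies its Corollary~\ref{T:coupling}, a degree-regularity consequence of Strassen's theorem with deficiency (Theorem~\ref{T:couplingNew}), then proves the converse by a counting identity over $\GoodS$. Your approach is more streamlined for this particular lemma because the two marginals share a common state space, so TV distance and maximal coupling are available directly; the paper's machinery is heavier here but is deliberately set up in the greater generality of a bipartite constraint between \emph{different} spaces (they also couple the underlying pair $(\G_1,\G_2)$ with $\G_{d_1+d_2}$, which feeds into Conjecture~\ref{con1}, and they reuse Corollary~\ref{T:coupling} in other work). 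Both proofs rest on the same double-counting identity $|\calS_n(d_1,d_2)|=|\calR_{d_1+d_2}(K_n)|\cdot\E|\calR_{d_1}(\G_{d_1+d_2})|$ and on the observation that the pair drawn from $\calG(n,d_1)\oplus\calG(n,d_2)$ is uniform on $\calS_n(d_1,d_2)$.
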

We prove Lemma \ref{l:reduction} in Section  \ref{S:reduction} using  sufficient conditions on coupling existence (in a general setting) stated in Section \ref{S:coupling}, which is of independent interest.

The asymptotics of $\E |\calR_{d_1}( \G_{d_1+d_2})|$ in the general case is  another open question, which does not imply nor is implied by Conjectures \ref{con1} and \ref{con11}.
For the case when $d_1+d_2 = n-1$,
it is known that, for all $1\leq  h \leq n-2$,
\begin{equation}\label{eq:reg}
|\calR_h(K_n)| = (2^{1/2} e^{1/4}
+o(1)) 
\left(\Dfrac{h}{n-1}\right)^{hn/2}
\left(1-\Dfrac{h}{n-1}\right)^{(n-1-h)n/2}
\binom{n-1}{h }^n,
\end{equation}
where $K_n$ is the complete graph with $n$ vertices; see \cite{LW2017,MW1,MW2}.
Given \eqref{eq:reg}, the computation of  $\E |\calR_{d_1}( \G_{d_1+d_2})|$  
is equivalent to estimating $|\calS_n(d_1,d_2)|$ or 
the probability that two random graphs $\G_{d_1} \sim \calG(n,d_1)$ and 
$\G_{d_2} \sim \calG(n,d_2)$ sampled independently are disjoint.

Hasheminezhad and McKay~\cite{HM1} conjectured an asymptotic formula for the
number $R(n;d_0,\ldots,d_k)$ of ways to partition the complete graph into edge-disjoint
regular subgraphs of degrees $d_0,\ldots,d_k$, when $k=o(n)$.
We present it in Section~\ref{s:related}. 
Since
\[
\E|\mathcal{R}_{d_1}(\G_{d_1+d_2})|=\Dfrac{R(n;n-d_1-d_2-1,d_1,d_2)}{|\calR_{d_1+d_2}(K_n)|} = \Dfrac{|\calS_n(d_1,d_2)|}{|\calR_{d_1+d_2}(K_n)|},
\]
a special case of that conjecture is as follows.

\begin{conjecture}\label{con2} 
If $d_1,d_2>0$ and $d_1 + d_2 \leq n-1$ then
\[
\E |\calR_{d_1}(\G_{d_1+d_2})|
= (2^{1/2} e^{1/4} +o(1)) \bigl(  \lambda ^{\lambda }(1-\lambda)^{1-\lambda }\bigr)^{n (d_1+d_2)/2}
\binom{d_1+d_2}{d_1}^{n},
\]
where $\lambda:= \dfrac{d_1}{d_1 +d_2}$. 
\end{conjecture}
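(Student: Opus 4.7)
The plan is to enumerate $|\calS_n(d_1,d_2)|$ directly via the complex-analytic saddle-point method (as used by McKay and Wormald and recently refined by Liebenau and Wormald for the proof of \eqref{eq:reg}), and then combine with the identity $\E|\calR_{d_1}(\G_{d_1+d_2})| = |\calS_n(d_1,d_2)|/|\calG(n,d_1+d_2)|$ and the known asymptotic \eqref{eq:reg} applied to $d_1+d_2$. Noting that each edge of $K_n$ is in exactly one of three mutually exclusive states (absent from the pair, in $G_1$, or in $G_2$), $|\calS_n(d_1,d_2)|$ equals the coefficient of $\prod_v x_v^{d_1}y_v^{d_2}$ in
\[
F(x,y) = \prod_{1\le u<v\le n}(1 + x_u x_v + y_u y_v).
\]
Writing this as a $2n$-fold Cauchy integral, the symmetric saddle sits at $x_v \equiv r_1$, $y_v \equiv r_2$ with $r_i^2 = \rho_i/(1-\rho_1-\rho_2)$, where $\rho_i = d_i/(n-1)$.

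The main steps are the following. (i) Substitute $x_v = r_1 e^{i\theta_v}$ and $y_v = r_2 e^{i\varphi_v}$ and verify that the quadratic part of $\log F$ in the angular variables has positive-definite real part, so the integrand is Gaussian near the saddle. (ii) Evaluate the leading Gaussian integral: its $2n\times 2n$ Hessian has a block structure indexed by colour and, by analogy with the single-colour analysis, contributes $(\sqrt{2}e^{1/4})^2 = 2e^{1/2}$ at leading order. (iii) Bound the tail contributions from the complement of the Gaussian region via switching-style combinatorial arguments in the spirit of Liebenau and Wormald. (iv) Divide by the asymptotic \eqref{eq:reg} applied to $d_1+d_2$ and simplify, using
\[
\binom{n-1}{d_1}\binom{n-1-d_1}{d_2} = \binom{n-1}{d_1+d_2}\binom{d_1+d_2}{d_1}
\]
together with $\bigl(\tfrac{d_1}{n-1}\bigr)^{d_1 n/2}\bigl(\tfrac{d_2}{n-1}\bigr)^{d_2 n/2}/\bigl(\tfrac{d_1+d_2}{n-1}\bigr)^{(d_1+d_2)n/2} = (\lambda^\lambda(1-\lambda)^{1-\lambda})^{n(d_1+d_2)/2}$, where $\lambda = d_1/(d_1+d_2)$, to obtain the conjectured form. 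The final constant $\sqrt{2}e^{1/4}$ arises as $2e^{1/2}$ divided by the single $\sqrt{2}e^{1/4}$ contributed by \eqref{eq:reg}.

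The main obstacle is executing the saddle-point analysis uniformly over the full range where $d_1$, $d_2$, and $n-1-d_1-d_2$ tend to infinity. Liebenau and Wormald's single-colour analysis already required substantial technical innovation to control contributions from ``bad'' regions of the integration torus, and the two-colour setting introduces new cross-couplings $x_u x_v \cdot y_u y_v$ in the Hessian expansion that must be shown either to be negligible or to factor cleanly. An alternative worth pursuing in parallel is to start from $\prod(1 + x_u x_v)(1 + y_u y_v)$, which overcounts pairs by admitting shared edges, and apply inclusion-exclusion over the set of shared edges; this reduces the problem to a joint concentration analysis of the number of edges shared between two independent random regular graphs, a regime where switching-based tools are already well developed. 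Either way, the crux is tracking the constant term of the asymptotic expansion with sufficient precision to recover the specific value $\sqrt{2}e^{1/4}$.
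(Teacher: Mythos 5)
What you are trying to prove is stated in the paper as a \emph{conjecture}, and the paper itself establishes it only in the dense regime $d_1,d_2=\omega(n/\log n)$ (Theorem~\ref{T:main2}); in full generality it is open, being the $k=2$ case of Conjecture~\ref{con_HM}. Your proposal is a roadmap rather than a proof: the three substantive steps you list --- positive definiteness of the two-colour Hessian, the claim that the Gaussian integral produces the constant $2e^{1/2}$, and the tail bounds away from the Gaussian region --- are precisely where all the work lives, and none of them is carried out. In particular, asserting that the $2n\times 2n$ Hessian ``contributes $(\sqrt2e^{1/4})^2$ by analogy with the single-colour analysis'' is not a computation: the mixed block coming from cross terms $r_1 r_2\, e^{i(\theta_u+\theta_v)}e^{-i(\varphi_u+\varphi_v)}$ does not vanish at the saddle, and whether its effect on the determinant really factors over colours must be proved, not inferred. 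Liebenau and Wormald's proof of \eqref{eq:reg} required an entire toolbox of switchings and complex-analytic estimates just for one colour; the two-colour version adds cross-coupled bad regions and is genuinely harder, which is why the authors leave it open. Your saddle location $r_i^2=\rho_i/(1-\rho_1-\rho_2)$ and the final algebraic simplification to $(\lambda^\lambda(1-\lambda)^{1-\lambda})^{n(d_1+d_2)/2}\binom{d_1+d_2}{d_1}^n$ are both correct, so the plan is sound in outline, but as written it carries the same gap as the conjecture itself.

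The paper's partial proof (Theorem~\ref{T:main2}) takes a genuinely different route that avoids direct enumeration of $|\calS_n(d_1,d_2)|$ altogether. It shows via Lemma~\ref{lem:MMcommon} and Corollary~\ref{cor-jmb} that for a typical $(d_1+d_2)$-regular host graph the count $|\calR_{d_1}(G)|$ is already asymptotically deterministic, so the expectation equals the typical value; the enumeration input is Theorem~\ref{thm:jane-misha-brendan} of \cite{GIM2022}, a completed saddle-point result for $h$-regular subgraphs of a \emph{fixed} dense host, which is far less delicate than a fresh $2n$-fold Cauchy integral over pairs of disjoint graphs. A double-counting identity then passes to complements to cover $d_1+d_2\le\frac23(n-1)$. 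This route buys reusability of existing enumeration theorems and yields the concentration needed for the Reduction Lemma and Theorem~\ref{T:main1}(ii) at the same time, at the cost of being confined to the range where those theorems apply. Your inclusion-exclusion alternative over shared edges is actually closer in spirit to the paper's sparse-case argument in Section~\ref{S:dense-sparse}, where the overlap distribution (Lemmas~\ref{overlap} and~\ref{lem:common}) is controlled by switchings; executing it uniformly is what remains to be done.
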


Note that Conjecture \ref{con2} generalises~\eqref{eq:reg}.
This conjecture is confirmed by previous results
for the ranges of degrees considered in Theorem \ref{T:main1}(i,iii); see the discussion in the next section after Conjecture \ref{con_HM}. Our contribution towards Conjecture~\ref{con2} is the following result.

\begin{theorem}\label{T:main2}
Conjecture \ref{con2} holds 
if $d_1,d_2 = \omega\left(\dfrac{n}{\log n}\right)$.
\end{theorem}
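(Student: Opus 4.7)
The plan is to exploit the identity
\[
\E|\calR_{d_1}(\G_{d_1+d_2})| = \frac{|\calS_n(d_1, d_2)|}{|\calR_{d_1+d_2}(K_n)|}
\]
together with the asymptotic formula \eqref{eq:reg} applied to $h=d_1+d_2$. The task reduces to estimating $|\calS_n(d_1, d_2)|$ asymptotically in the dense regime $d_1, d_2 = \omega(n/\log n)$.

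My strategy is to enumerate ordered pairs in $\calS_n(d_1, d_2)$ by first sampling the $d_1$-regular component:
\[
|\calS_n(d_1, d_2)| = |\calR_{d_1}(K_n)| \cdot \E_{H \sim \calG(n, d_1)} |\calR_{d_2}(K_n \setminus H)|.
\]
The first factor is given by \eqref{eq:reg} with $h=d_1$. For the second factor, I would prove that, for a typical $d_1$-regular $H$, the number of $d_2$-regular subgraphs of the $(n-1-d_1)$-regular graph $K_n \setminus H$ is asymptotic to the value predicted by the natural analogue of \eqref{eq:reg} in which the ``base degree'' $n-1$ is replaced by $n-1-d_1$:
\[
|\calR_{d_2}(K_n \setminus H)| \sim 2^{1/2} e^{1/4} \left(\tfrac{d_2}{n-1-d_1}\right)^{d_2 n/2} \left(\tfrac{n-1-d_1-d_2}{n-1-d_1}\right)^{(n-1-d_1-d_2)n/2} \binom{n-1-d_1}{d_2}^{n}.
\]
Once this estimate is in hand, the three $2^{1/2}e^{1/4}$ factors collapse to a single $2^{1/2}e^{1/4}$, the binomial coefficients telescope via $\binom{n-1}{d_1}\binom{n-1-d_1}{d_2}/\binom{n-1}{d_1+d_2}=\binom{d_1+d_2}{d_1}$ to produce the $\binom{d_1+d_2}{d_1}^{n}$ factor of Conjecture \ref{con2}, and a short manipulation shows that the remaining ``power'' terms combine to $(\lambda^\lambda(1-\lambda)^{1-\lambda})^{n(d_1+d_2)/2}$.

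The main obstacle is the asymptotic estimate for $|\calR_{d_2}(K_n \setminus H)|$ with a typical $d_1$-regular $H$. Unlike \eqref{eq:reg}, whose proof by Liebenau--Wormald exploits the complete symmetry of $K_n$, here we must count $d_2$-regular spanning subgraphs of a specific (but random-like) regular graph. I would address this by adapting the switching and permanent-based enumeration of \cite{LW2017, MW1, MW2} from $K_n$ to general dense regular base graphs, using concentration of local statistics (edge co-degrees, short cycle counts, and similar quantities) in $\calG(n,d_1)$ to show that the count is insensitive to the specific choice of $H$ up to a $(1+o(1))$ factor. The hypothesis $d_1, d_2 = \omega(n/\log n)$ is precisely the range in which the required concentration and switching error bounds have the accuracy needed; in particular, this matches the threshold that appears in Theorem~\ref{T:main1}(ii) for essentially the same reasons.
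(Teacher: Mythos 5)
Your starting identity $\E|\calR_{d_1}(\G_{d_1+d_2})| = |\calS_n(d_1,d_2)|/|\calR_{d_1+d_2}(K_n)|$ is exactly the one recorded in the paper before Conjecture~\ref{con2}, and your factorisation $|\calS_n(d_1,d_2)| = |\calR_{d_1}(K_n)|\cdot \E_{H\sim\calG(n,d_1)}|\calR_{d_2}(K_n\setminus H)|$ is a correct dual reformulation of the same expectation. However, the step you explicitly flag as the ``main obstacle''---an asymptotic formula for $|\calR_{d_2}(K_n\setminus H)|$ for typical $d_1$-regular $H$---is not something that can be covered by promising to ``adapt'' the Liebenau--Wormald and McKay--Wormald machinery. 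This adaptation is itself a substantial piece of research; the paper does not attempt it, but instead invokes a pre-existing enumeration theorem for regular spanning subgraphs of a dense regular host graph under a codegree hypothesis (Theorem~11 of~\cite{GIM2022}, stated above as Theorem~\ref{thm:jane-misha-brendan}), and then supplies the nontrivial remaining work: spectral estimates on $\det Q$ (Lemma~\ref{lem:detq}), Gaussian moment computations via Isserlis' formula (Lemmas~\ref{lem:exp-u-v} and~\ref{lem:exp-u-v2}), and a concentration result for codegrees in $\calG(n,d)$ (Lemma~\ref{lem:MMcommon}). Without such a ready-made input, your claim is a roadmap rather than a proof.

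There is a second, more concrete gap: even granting the typical-case asymptotic, passing from ``holds for $H$ in an event of probability $1-e^{-\omega(n/\log^6 n)}$'' to ``holds in expectation over $H$'' requires a bound on $|\calR_{d_2}(K_n\setminus H)|$ uniform over \emph{all} $d_1$-regular $H$, and it must be within a factor $e^{O(n/\log^6 n)}$ of the typical value to absorb the tiny failure probability. In the paper this is Corollary~\ref{cor-jmb}(a), which only applies when the host graph has degree at least $\alpha n$ for some fixed $\alpha>\tfrac12$; this forces a case split at $d_1+d_2=\tfrac23(n-1)$, with the lower-density case handled by a double-counting identity relating $\E|\calR_{d_1}(\G_{d_1+d_2})|$ to $\E|\calR_{d_2}(\G_{d_2+d_3})|$ for $d_3=n-1-d_1-d_2$ together with~\eqref{eq:reg}. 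Your proposal is silent on the atypical contribution and on this case analysis, and neither can be dismissed: the contribution from a probability-$e^{-cn/\log^6 n}$ event can dominate the expectation if the count there is allowed to be, say, $e^{\Theta(n)}$ times larger than typical.
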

We prove Theorem \ref{T:main2} together with Theorem \ref{T:main1}(ii)
in Section \ref{S:main2}.
The proof relies on the aforementioned reduction lemma and the subgraph asymptotic enumeration formula  from \cite[Theorem 10]{GIM2022}. 
%Another ingredient of the proof is a concentration result for the number of common neighbours in random regular graphs, which follows from the local limit theorem derived in Section \ref{S:neighbours}.

\subsection{Related results and conjectures}\label{s:related}
Several results and conjectures should be mentioned in the context of Conjecture  \ref{con1}, Conjecture \ref{con11}, and  Conjecture \ref{con2}. 
In particular,
the following embedding of random regular graphs was  conjectured  in  \cite[Conjecture 1.2]{GIM2022}.

\begin{conjecture}[Gao, Isaev, McKay, \cite{GIM2022}]\label{con:emb}
Let $0 \leq d_1 \leq d \leq  n-1$ be integers, other than 
$(d_1, d) = (1, 2)$ or
$(d_1, d) = (n-3, n-2)$.  Then there exists a coupling
$(\G_{d_1}, \G_d)$ such that $\G_{d_1} \sim \calG(n, d_1)$, $\G_d \sim \calG(n, d)$, and $\Pr(\G_{d_1} \subseteq \G_d) = 1-o(1)$.
\end{conjecture}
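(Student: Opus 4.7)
The plan is to reduce the embedding to a concentration statement on $|\calR_{d_1}(\G_d)|$ analogous to the Reduction Lemma. Given $\G_d\sim\calG(n,d)$, let $\G_{d_1}^\star$ be sampled uniformly from $\calR_{d_1}(\G_d)$. Then $\G_{d_1}^\star\subseteq\G_d$ by construction, and the problem reduces to showing that the marginal distribution of $\G_{d_1}^\star$ is asymptotically uniform on $\calG(n,d_1)$. For any fixed $H_0\in\calG(n,d_1)$,
\[
\Pr(\G_{d_1}^\star=H_0) = \frac{1}{|\calG(n,d)|}\sum_{\substack{G\in\calG(n,d)\\ G\supseteq H_0}}\frac{1}{|\calR_{d_1}(G)|}.
\]
If $|\calR_{d_1}(G)|$ concentrates at $M:=\E|\calR_{d_1}(\G_d)|$ for most $G\sim\calG(n,d)$, and if $N(H_0):=|\{G\in\calG(n,d):G\supseteq H_0\}|=|\calR_{d-d_1}(K_n\setminus H_0)|$ concentrates at $M\cdot|\calG(n,d)|/|\calG(n,d_1)|$ for most $H_0\in\calG(n,d_1)$, then $\Pr(\G_{d_1}^\star=H_0)\approx 1/|\calG(n,d_1)|$ on a $(1-o(1))$-fraction of labelled $d_1$-regular graphs, and the general coupling tools from Section \ref{S:coupling} upgrade this to the desired coupling $\G_{d_1}=\G_{d_1}^\star$ a.a.s.

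With that reduction, the proof splits by regime. First, when $d\to\infty$ and $d_2:=d-d_1$ falls in any of the ranges of Theorem~\ref{T:main1}, Conjecture~\ref{con11} is already established, and the coupling $(\G_\oplus,\G_d)$ directly yields $\G_{d_1}\subseteq\G_d$ from the first marginal of $\G_\oplus$. Second, dense regimes ($n-d$ small) can be transferred to sparse ones by complementation: the bijection $G\mapsto K_n\setminus G$ converts the embedding $\calG(n,d_1)\subseteq\calG(n,d)$ into a sprinkling-style statement for $\calG(n,n-1-d)\oplus\calG(n,d-d_1)$, which brings us back to the previous case.

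Third, for fixed $d$ and fixed $d_1\geq 2$ with $(d_1,d)\neq(1,2)$, the classical contiguity of $\calG(n,d_1)\oplus\calG(n,d-d_1)$ with $\calG(n,d)$ needs to be upgraded to a high-probability coupling. I would do this by showing that the second moment ratio $\E|\calR_{d_1}(\G_d)|^2/(\E|\calR_{d_1}(\G_d)|)^2$ tends to $1$ via the small subgraph conditioning method of Robinson and Wormald: the principal source of variance is the correlation with short cycle counts in $\G_d$, whose joint distribution is approximately Poisson in this regime. Combined with enumeration for $N(H_0)$ obtained from the subgraph formulas of \cite{GIM2022} extended into the moderately dense range, this yields both concentration statements required by the reduction.

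The main obstacle will be the intermediate regime where $d_1$ and $d-d_1$ are both moderate (growing slower than any power of $n$ but faster than a constant), where neither the enumeration route nor small subgraph conditioning applies cleanly; here a switching argument tailored to the pair $(d_1,d)$ seems necessary to control the ratio $N(H_0)/N(H_0')$ for pairs of $d_1$-regular graphs differing by a single switching. The excluded pair $(d_1,d)=(1,2)$ shows why the statement can genuinely fail at the boundary: a random $2$-regular graph has odd cycles with probability bounded away from zero, obstructing any perfect matching subgraph; by complementation the same is true at $(n-3,n-2)$. A careful verification is needed that outside these two exceptions no such parity-type obstruction arises, which is the structural ingredient that makes the concentration approach go through.
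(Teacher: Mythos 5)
This statement is a \emph{conjecture} in the paper, not a theorem: the authors attribute it to \cite{GIM2022} and only establish one new case of it (the corollary of Theorem~\ref{T:main1}(ii), namely $\min\{d_1,d-d_1\}=\omega(n/\log n)$). A full proof is an open problem, so there is no ``paper's own proof'' to compare against, only the partial results listed after the conjecture.

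Your proposal has a fatal error in the third (fixed-degree) regime. You claim that small subgraph conditioning shows $\E|\calR_{d_1}(\G_d)|^2/(\E|\calR_{d_1}(\G_d)|)^2\to 1$ for fixed $d_1\ge 2$ and fixed $d$. This is backwards: the entire point of the Robinson--Wormald method is that this ratio converges to a constant strictly greater than $1$, namely $\prod_k\exp(\lambda_k\delta_k^2)$, reflecting genuine non-vanishing fluctuations of $|\calR_{d_1}(\G_d)|$ driven by short cycle counts. The paper says so explicitly: ``it is known that the number of perfect matchings is not concentrated in $\G(n,d)$ for any fixed $d$.'' Consequently the concentration-based reduction you propose (which is in spirit the paper's Lemma~\ref{l:reduction}, i.e.\ the right tool for Conjecture~\ref{con11}) is the wrong tool for Conjecture~\ref{con:emb} at constant degree. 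The paper stresses exactly this distinction: Conjecture~\ref{con:emb} only demands that the two marginals be uniform, not that the difference $\G_d\setminus\G_{d_1}$ be uniform, and that extra slack is precisely what one must exploit in the constant-degree regime, since concentration fails there. A Chebyshev/second-moment route to uniformity of $\G_{d_1}^\star$ cannot close.

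Your first regime is also incomplete: you invoke Theorem~\ref{T:main1}, but that theorem covers only three narrow parameter windows (one-factor with $d_2=o(n^{1/3})$; all three of $d_1,d_2,n-d$ of order $\omega(n/\log n)$; and a sparse-dense combination governed by the inequality in part~(iii)). It does not cover, for example, $d_1=d_2=\log n$, so the claim ``$d\to\infty$ and $d_2$ falls in the ranges of Theorem~\ref{T:main1}'' leaves a very large unhandled middle range. Your structural observation about the exceptions $(1,2)$ and $(n-3,n-2)$ (odd cycles obstructing perfect matchings) is correct, but it addresses why the boundary cases are excluded, not the non-concentration obstacle in the bulk fixed-degree regime, which is where the proposal actually breaks.
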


Prior to this work, Conjecture \ref{con:emb} was known to be true for the following cases; see \cite[Section 1.2]{Gao2023} and references therein for details.
\begin{itemize}\itemsep=0pt
\item $d_1 =1$ and $3\leq d \leq n-1$;
\item $d_1 = \omega(\log^7 n)$, $d_1 = O\left(\dfrac{n^{1/7}}{\log n}\right)$, and $d_1\leq d \leq n-1$;
\item $d_1\rightarrow \infty$, $d_1 = o(n^{1/3})$, and $d= d_1+1$;
\end{itemize}
A recent sandwiching result \cite[Theorem 1.2]{GIM2023} also implies Conjecture \ref{con:emb} when
\begin{itemize}
\item $n-\log^4 n \geq  d\geq \gamma d_1 \geq \log^4 n$ for some fixed $\gamma>1$.
\end{itemize}
As an immediate corollary of Theorem \ref{T:main1}(ii), we get the  following result, which is not covered by the previous results.

\begin{corollary}
Conjecture \ref{con:emb} is true if $\min\{d_1,d-d_1\} = \omega\left(\dfrac{n}{\log n}\right)$.
\end{corollary}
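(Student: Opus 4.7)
The plan is to derive the corollary as an immediate consequence of Theorem~\ref{T:main1}(ii). I set $d_2 := d - d_1$, turning the embedding problem into a decomposition problem for a random regular graph of degree $d = d_1 + d_2$. The hypothesis $\min\{d_1, d - d_1\} = \omega(n/\log n)$ gives $d_1, d_2 = \omega(n/\log n)$; reading the statement so that the third quantity $n - d = n - d_1 - d_2$ appearing in Theorem~\ref{T:main1}(ii) is also $\omega(n/\log n)$, the hypothesis of Theorem~\ref{T:main1}(ii) is met (the complementary regime where $n-d$ is small lies outside what Theorem~\ref{T:main1}(ii) directly covers).

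Theorem~\ref{T:main1}(ii) then yields Conjecture~\ref{con1} for the pair $(d_1, d_2)$, producing a coupling $(\G_1, \G_2, \G_{d_1}, \G_{d_2}, \G_{d_1+d_2})$ with $(\G_1, \G_2)$ uniform on $\calS_n(d_1, d_2)$ and with the marginals $\G_{d_i} \sim \calG(n, d_i)$ for $i \in \{1,2\}$ and $\G_{d_1+d_2} \sim \calG(n, d)$, satisfying
\[
\Pr\bigl(\G_{d_1} = \G_1,\ \G_{d_2} = \G_2,\ \G_{d_1+d_2} = \G_1 \cup \G_2\bigr) = 1 - o(1).
\]
The final step is to discard $\G_2$ and $\G_{d_2}$: on the event in the last display, $\G_{d_1} = \G_1 \subseteq \G_1 \cup \G_2 = \G_{d_1+d_2}$, so projecting the coupling onto the pair $(\G_{d_1}, \G_d)$ (where $\G_d := \G_{d_1+d_2}$) delivers the correct marginals $\G_{d_1} \sim \calG(n, d_1)$, $\G_d \sim \calG(n, d)$ together with $\Pr(\G_{d_1} \subseteq \G_d) = 1 - o(1)$, which is exactly the conclusion of Conjecture~\ref{con:emb}.

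No substantive obstacle arises: all the difficulty is packed into Theorem~\ref{T:main1}(ii), and the embedding is recovered for free by forgetting the second factor of the decomposition. The only point to note is the implicit assumption that $n - d = \omega(n/\log n)$; in the regime where the target $\G_d$ is much denser, one would need a separate argument (for example via complementation combined with prior embedding results) which goes beyond a direct application of Theorem~\ref{T:main1}(ii).
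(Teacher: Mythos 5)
Your reduction is exactly the one the authors intend: set $d_2 := d - d_1$, apply Theorem~\ref{T:main1}(ii) to obtain the coupling of Conjecture~\ref{con1}, and project onto $(\G_{d_1}, \G_{d_1+d_2})$; on the success event $\G_{d_1} = \G_1 \subseteq \G_1\cup\G_2 = \G_{d_1+d_2}$, which is precisely the conclusion of Conjecture~\ref{con:emb}. The step of ``forgetting the second factor'' is clean because the coupling from Conjecture~\ref{con1} already delivers a $\G_{d_1}$ with the uniform $\calG(n,d_1)$ marginal, so no separate symmetry argument about the conditional law of the first factor in $\calG(n,d_1)\oplus\calG(n,d_2)$ is required.

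Your caveat about the third quantity is well spotted, and it is a genuine imprecision in the \emph{paper's} statement rather than a deficiency of your proof. Theorem~\ref{T:main1}(ii) requires $\min\{d_1, d_2, n-d_1-d_2\} = \omega(n/\log n)$, while the corollary's hypothesis $\min\{d_1,d-d_1\}=\omega(n/\log n)$ places no constraint on $n-d$. In the range where $n-1-d$ is small (say a bounded integer at least $2$) while $d-d_1=o(n)$, so that $d/d_1\to 1$, neither Theorem~\ref{T:main1}(ii) nor complementation combined with the prior embedding results listed in Section~\ref{s:related} (the \cite{GIM2023} sandwiching result needs a fixed ratio gap $d\ge\gamma d_1$, and after complementation needs $n-1-d\gtrsim\log^4 n$) applies, so the claim is not literally an ``immediate corollary'' of Theorem~\ref{T:main1}(ii) there. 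The corollary should be read with the additional hypothesis $n-1-d=\omega(n/\log n)$, which is exactly what your argument uses.
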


Conjecture \ref{con:emb} is weaker than Conjecture \ref{con1}. In particular, the former only requires two random regular graphs $\G_{d_1}$ and $\G_{d}$ to be uniform but not their difference.
Note that in the case when $d=d_1+1$, a random relabelling of vertices ensures that the perfect matching $\G_{d_1+1}\setminus\G_{d_1}$ has the uniform distribution.
However, this still does not imply Conjecture \ref{con1} since the random pair $(\G_1,\G_2)$ from $\calS_n$ can have different marginal distributions.

Given  that 
Conjecture \ref{con:emb} is stated (and confirmed for some cases) as including constant degrees, 
one can be tempted to believe that Conjectures \ref{con1} and \ref{con11} might also hold for $d_1+d_2 = O(1)$. However, this is not true. In particular, it is known that the number of perfect matchings is not concentrated in $\G(n,d)$ for any fixed $d$: it follows, for example, from the asymptotic distribution derived in \cite[Theorem~5]{Janson1995}. Applying the Reduction lemma (Lemma \ref{l:reduction}) we can see that the assumption 
$d_1+ d_2 \rightarrow \infty$ is necessary, at least if $d_1 =1$.

Conjecture~\ref{con2} is a particular case of a more general conjecture from~\cite{HM1} about the number of partitions of a clique into spanning regular disjoint graphs stated below.
For integers $d_0,d_1,\ldots,d_k\geq 1$ such that $\sum_{i=0}^k d_i=n-1$, recall that $R(n;d_0,\ldots,d_k)$ is the number of ways to partition the edges of $K_n$ into spanning regular subgraphs of degrees $d_0,d_1,\ldots,d_k$. Let $\lambda_i=\frac{d_i}{n-1}$.

\begin{conjecture}[Hasheminezhad, McKay, \cite{HM1}]\label{con_HM}
If $k=o(n)$, then
\[
R(n;d_0,\ldots,d_k)=(1+o(1))e^{k/4} 2^{k/2}\biggl(\,\prod_{i=0}^k \lambda_i^{\lambda_i}\biggr)^{ \binom{n}{2}}
\binom{n-1}{d_0,\ldots,d_k}^n. 
\]
\end{conjecture}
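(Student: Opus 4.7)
The plan is to prove Conjecture~\ref{con_HM} by induction on $k$, peeling one color at a time and reducing to the known case $k=1$ given by~\eqref{eq:reg}. Write $D_j := d_0+\cdots+d_j$. For the inductive step, a $(d_0,\ldots,d_k)$-partition of $K_n$ is equivalent to first choosing a $D_{k-1}$-regular subgraph $\G$ and then partitioning $\G$ into $(d_0,\ldots,d_{k-1})$-regular pieces, so that
\[
R(n;d_0,\ldots,d_k) = |\calR_{D_{k-1}}(K_n)|\,\E_{\G\sim\calG(n,D_{k-1})}\bigl[R(\G;d_0,\ldots,d_{k-1})\bigr].
\]
Iterating this peeling inside $\G$ and invoking concentration of subgraph counts at every step would yield the telescoping product
\[
R(n;d_0,\ldots,d_k) \sim |\calR_{D_{k-1}}(K_n)| \prod_{j=1}^{k-1} \E\bigl|\calR_{D_{j-1}}(\G_{D_j})\bigr|,
\]
in which each factor is precisely the quantity whose asymptotics is the subject of Conjecture~\ref{con2}. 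If Conjecture~\ref{con2} holds in full generality, the $k$ factors on the right each contribute a constant $2^{1/2}e^{1/4}$, yielding the overall $e^{k/4}2^{k/2}$ in Conjecture~\ref{con_HM}; the multinomial $\binom{n-1}{d_0,\ldots,d_k}^n$ and the entropy factor $(\prod_i \lambda_i^{\lambda_i})^{\binom{n}{2}}$ then emerge by a routine telescoping Stirling computation.

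The main obstacle is that the passage from $\E R(\G;d_0,\ldots,d_{k-1})$ to the telescoping form above is valid only when the subgraph counts $|\calR_{D_{j-1}}(\G_{D_j})|$ are concentrated around their means at every scale. By Lemma~\ref{l:reduction}, this concentration is equivalent to Conjecture~\ref{con11} for the associated degree pair, so a full proof by this route is at least as hard as resolving Conjecture~\ref{con11} in its entirety, and must also extend Conjecture~\ref{con2} beyond the regimes covered by Theorem~\ref{T:main1} and Theorem~\ref{T:main2}. A further subtlety is that a uniform random $D_{j-1}$-regular subgraph of $\G_{D_j}\sim\calG(n,D_j)$ is not automatically distributed as $\calG(n,D_{j-1})$; the contiguity needed to support the recursion is supplied by the same concentration bound, but must be quantitative enough that the errors do not accumulate across the $k$ iterations, which is nontrivial when $k$ is allowed to grow with $n$.

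An orthogonal attack would bypass the induction and adapt the complex-analytic saddle-point method of McKay--Wormald used for~\eqref{eq:reg} directly to partitions into $k+1$ regular subgraphs. One would express $R(n;d_0,\ldots,d_k)$ as a contour integral over a $kn$-dimensional torus, identify the saddle at the points corresponding to the color proportions $\lambda_i$, and evaluate the Hessian restricted to the tangent space of the degree-preserving constraint. The value at the saddle supplies $(\prod_i \lambda_i^{\lambda_i})^{\binom{n}{2}}$ while the Gaussian integral over the $k$ effective degrees of freedom per vertex supplies the constant $e^{k/4}2^{k/2}$. The chief technical difficulty, already delicate at $k=1$, is bounding the integrand on the portion of the torus far from the saddle; this estimate would have to remain uniform as $k$ grows with $n$, and producing such a multivariate bound appears to me to be the principal hurdle for the saddle-point route.
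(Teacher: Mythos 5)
The statement you are addressing is labelled as a conjecture in the paper, and the paper offers no proof of it. The authors prove only the special case $k=2$ with $\min\{d_0,d_1,d_2\}=\omega(n/\log n)$ (Theorem~\ref{T:main2}), and remark that the machinery of Section~\ref{S3} should extend to any \emph{fixed}~$k$ in the dense regime; the general $k=o(n)$ statement is left open. Your peeling identity
$R(n;d_0,\ldots,d_k)=|\calR_{D_{k-1}}(K_n)|\cdot\E\, R(\G;d_0,\ldots,d_{k-1})$
with $\G\sim\calG(n,D_{k-1})$ is precisely the double-counting step the paper uses at $k=2$, and the telescoping you describe --- $k$ factors of $2^{1/2}e^{1/4}$, the binomials collapsing to $\binom{n-1}{d_0,\ldots,d_k}^n$, and the entropy terms collapsing to $(\prod_i\lambda_i^{\lambda_i})^{\binom{n}{2}}$ --- is a correct reduction of Conjecture~\ref{con_HM} to Conjecture~\ref{con2} plus concentration at every scale, the latter being equivalent to Conjecture~\ref{con11} by Lemma~\ref{l:reduction}.

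You also correctly identify the two genuine obstructions: (i) iterating the peeling requires the intermediate counts $|\calR_{D_{j-1}}(\G_{D_j})|$ (and the inner $R(\cdot\,;d_0,\ldots,d_{j-1})$) to be concentrated, and a uniform random $D_{j-1}$-regular subgraph of a typical $D_j$-regular graph to be couplable with $\calG(n,D_{j-1})$ --- neither is known across the full parameter range; and (ii) with $k$ allowed to grow as $o(n)$, the per-round $o(1)$ errors must be quantified so that they accumulate to $o(1)$ overall, which is not supplied by any current result. Since you explicitly state that the route is at least as hard as resolving Conjectures~\ref{con2} and~\ref{con11} in full, your submission is an accurate analysis of where the difficulty lies rather than a proof --- which is exactly the paper's position. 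The saddle-point alternative you sketch is indeed the technique behind~\eqref{eq:reg} at $k=1$, and the uniformity of the off-saddle bound as $k$ grows is the right hurdle to name.
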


The case $k=1$ of Conjecture \ref{con_HM} is equivalent to \eqref{eq:reg} and thus resolved completely by combining the results of~\cite{LW2017,MW1,MW2}.  %, and the asymptotics of $R(n;d_1+d_2,n-d_1-d_2)$ is known, we get that Conjecture~\ref{con2} and Conjecture~\ref{con_HM} in the case $k=2$ are equivalent. 
Conjecture~\ref{con_HM} is also confirmed in \cite{HM1} for the following three cases: 
\begin{itemize}\itemsep=0pt
\item $\sum_{i=1}^k d_i=o(n^{1/3})$ and $\sum_{1\leq i\leq j<t\leq k}d_id_jd_t^2=o(n)$,
\item $k=o(n^{5/6})$ and $d_1=\cdots=d_k=1$,
\item $\min\{d_1,n-1-d_1\}\geq cn/\log n$ for some constant $c>\frac23$ and $\sum_{i=2}^k d_i=O(n^{\eps})$ for sufficiently small $\eps>0$.
\end{itemize}

Theorem \ref{T:main2} establishes this conjecture for the case $k=2$ and
$\min\{d_0, d_1,d_2\}\gg n/\log n$, which is not covered by previous results. We believe that the methods given in Section~\ref{S3} of this paper  can be extended to prove 
Conjecture \ref{con_HM} in the case of an arbitrary fixed number of parts $k$, provided that all the factors are sufficiently dense.

% We made the following contribution towards Conjecture \ref{con_HM} 
% \begin{theorem}\label{T:HM}
% 	Conjecture \ref{con_HM} holds in the case when
%   $k= O(1)$,   $d_i = \omega(n/\log n)$ for $i=0$ and all $i \in [k]$, and  $d_0 + \max_{i\in [k]} d_i \geq \alpha n$ for some fixed $\alpha>1/2$.
%\end{theorem}
%Theorem \ref{T:HM} is proved in Section \ref{S:HM}.

%\mi{State full conjecture from \cite{HM1}}

\section{Coupling construction and  the Reduction lemma}
In this section, we consider a general question of existence of a coupling with given marginal distributions satisfying constraints encoded via a bipartite graph with high probability.
 As a consequence, we establish Lemma \ref{l:reduction} (the Reduction lemma). Finally, we derive Theorem \ref{T:main1}(i) based on estimates from \cite{Gao2023} and \cite{GIM2021}.%\mz{in Section ...}.

\subsection{General coupling for a bipartite graph}\label{S:coupling}

Let $D$ be a bipartite graph with parts $S$ and $T$. For simplicity we identify $D$ with its set of edges from $S \times T$.
Koperberg \cite[Proposition 6]{Koperberg2024}
gives a proof of the following result.

\begin{theorem}[Strassen’s Theorem with deficiency \cite{Koperberg2024}]\label{T:couplingNew}
Let $\epsilon \in [0,1]$.
There exists a coupling $(X,Y)\in S\times T$ with marginal distributions $\pi_S$ and $\pi_T$ such  
that
$\Pr(XY \notin D)\leq \epsilon$   if and only if 
\begin{equation}\label{eq:Hall}
    \pi_S(\varOmega) \leq \pi_T(N(\varOmega)) + \epsilon \quad \text{for any $\varOmega \subseteq S$,}
\end{equation}
where $N(\varOmega)\subseteq T$ is the set of vertices that are adjacent in $D$ to a vertex from $\varOmega$. 
\end{theorem}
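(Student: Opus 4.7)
My plan is to recast the statement as a transportation linear program on a bipartite network and invoke max-flow/min-cut duality. The necessity direction is a short decomposition, while the sufficiency comes from reading off the coupling from an optimal flow.

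For necessity, suppose a coupling $(X,Y)$ satisfies $\Pr(XY \notin D) \leq \epsilon$. For any $\varOmega \subseteq S$,
\[
\pi_S(\varOmega) = \Pr(X \in \varOmega,\, Y \in N(\varOmega)) + \Pr(X \in \varOmega,\, Y \notin N(\varOmega)).
\]
The first term is at most $\pi_T(N(\varOmega))$. For the second, note that if $X \in \varOmega$ and $Y \notin N(\varOmega)$, then $XY \notin D$ by definition of $N(\varOmega)$, so this event is contained in $\{XY \notin D\}$ and has probability at most $\epsilon$.

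For sufficiency, I would build the standard bipartite transportation network with a source $\sigma$ and a sink $\tau$: arcs $\sigma \to s$ of capacity $\pi_S(s)$ for each $s \in S$, arcs $t \to \tau$ of capacity $\pi_T(t)$ for each $t \in T$, and arcs $s \to t$ of unbounded capacity for each $(s,t) \in D$. Given a feasible flow of value $v$, its values on the $D$-arcs form a sub-probability measure $\mu$ on $D$ whose $S$- and $T$-marginals are dominated by $\pi_S$ and $\pi_T$. Extending $\mu$ by any coupling of the residual measures $\pi_S - \mu_S$ and $\pi_T - \mu_T$ (which have common total mass $1-v$ and may be supported anywhere in $S \times T$) yields a coupling of $\pi_S$ and $\pi_T$ with $\Pr(XY \in D) \geq v$. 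Since no finite cut may sever a $D$-arc, every minimum $\sigma$-$\tau$ cut is parametrised by a subset $A \subseteq S$ placed together with $N(A)$ on the source side, giving cut value $1 - \pi_S(A) + \pi_T(N(A))$. By max-flow/min-cut,
\[
\max v \;=\; \min_{A \subseteq S}\bigl(1 - \pi_S(A) + \pi_T(N(A))\bigr)
\;=\; 1 - \sup_{A \subseteq S}\bigl(\pi_S(A) - \pi_T(N(A))\bigr),
\]
and hypothesis~\eqref{eq:Hall} says precisely that the last supremum is at most $\epsilon$. Hence $\max v \geq 1 - \epsilon$ and the desired coupling exists.

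The main obstacle would be to make the max-flow/min-cut step rigorous if $S$ or $T$ is uncountable: measurability of $N(\varOmega)$, the existence of an optimum, and an infinite-dimensional version of LP duality would all need care (Koperberg's cited proof presumably handles the general case by approximation). In the applications to Lemma~\ref{l:reduction}, however, $S$ and $T$ are finite sets of graphs, so elementary finite LP duality on the bipartite network above suffices and the argument goes through directly.
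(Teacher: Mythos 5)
Your proof is correct. Note first that the paper does not actually prove this theorem: it imports it as \cite[Proposition 6]{Koperberg2024} and, in the surrounding discussion, merely remarks that it can also be obtained from Strassen's original theorem on Polish spaces (the $\epsilon=0$ case) or derived from Mirsky's theorem on integral matrices with bounds. So there is no in-paper proof to match against; what you have supplied is a self-contained argument, and it is sound.

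Your necessity direction is the standard one-line decomposition and is exactly right: the event $\{X\in\varOmega,\ Y\notin N(\varOmega)\}$ is contained in $\{XY\notin D\}$, so it contributes at most $\epsilon$. Your sufficiency direction via the source/sink transportation network, with infinite capacity on the $D$-arcs, is the classical max-flow/min-cut proof of Hall-type deficiency theorems: every finite cut must take the form $\{\sigma\}\cup A\cup N(A)$ for some $A\subseteq S$, giving capacity $1-\pi_S(A)+\pi_T(N(A))$, and condition~\eqref{eq:Hall} forces this to be at least $1-\epsilon$; then a flow of value $v\geq 1-\epsilon$ is extended to a full coupling by pairing the residual masses (each of total mass $1-v$) arbitrarily. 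This is precisely the combinatorial route that Koperberg's note surveys, so your argument is in the same spirit as the cited source even though the paper itself skips it. Your caveat about uncountable state spaces is fair but immaterial here: in the Reduction lemma, $S=\calR_d(K_n)$ and $T=\calS_n(d_1,d_2)$ are finite, so finite LP duality suffices, as you observe. The one thing you could tighten is to state explicitly that the max flow is \emph{attained} (true in the finite case, where the LP is over a nonempty compact polytope), so that the existential "there exists a coupling" conclusion is licensed rather than merely approximated.
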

Condition \eqref{eq:Hall} resembles Hall's classical marriage theorem. For the case  of $\epsilon=0$ it appeared in Strassen’s theorem \cite{Strassen1965} which applies to a more general non-discrete setting with Polish spaces. Thus, Theorem \ref{T:couplingNew} is regarded as Strassen’s theorem with deficiency~$\eps$. Similar results appeared in many previous works; see \cite{Koperberg2024} for more references and discussions. In particular,
Theorem \ref{T:couplingNew} can   be derived as a consequence of Mirsky’s theorem  on integral matrices with constraints \cite{Mirsky1968}.

As a consequence of Theorem \ref{T:couplingNew}, we give the following sufficient conditions which are easier to check for our purposes.

%In this section, we construct a coupling $(X,Y)$
% such that $X$, $Y$ are uniformly distributed on $S$, $T$, respectively, and $XY$  belongs to $D$ with  probability close to $1$ under assumptions that $D$ is a near-regular graph. 

\begin{corollary}\label{T:coupling}
Let $\delta,\eps \in [0,1]$ and
\begin{align*}
\GoodS &:= \left\{x\in S \st  \deg_D(x) \geq (1-\eps)\dfrac{|D|}{|S|}\right\},\\
\GoodT &:= \left\{y\in T \st  \deg_D(y) \geq (1-\eps)\dfrac{|D|}{|T|}\right\}.
\end{align*}
Assume that $|\GoodS| \geq (1-\delta)|S|$ and $|\GoodT| \geq (1-\delta)|T|$.
%  Assume that
% \begin{equation*} 
%\begin{aligned}
%    \Pr\Bigl(\deg(X) <(1-\eps) \E[\deg(X)]\Bigr)&\leq \delta,\\ 
%     \Pr\Bigl(\deg(Y) < (1-\eps) \E[\deg(Y)]\Bigr) &\leq \delta,
%\end{aligned}
%\end{equation*}
% \begin{equation*} 
%\begin{aligned}
%    &|\GoodS| \geq  (1-\delta)|S|, \qquad |\GoodT| \geq  (1-\delta)|T|,\\
%&\deg(x) \geq (1-\eps) d_S, \qquad\text{if $x\in \GoodS$,}\\
%&\deg(y)\geq  (1-\eps)d_T, \qquad \text{if $y\in \GoodT$,} 
%	 &\deg(x) \leq \bar{f}  \qquad  \text{for all $x\in S$,}\\
%	  &\deg(y) \leq \bar{b}  \qquad  \text{for all $y\in T$.}
%\end{aligned}
%\end{equation*}
%for some $\delta,\eps \in (0,1)$.
Then, there is a coupling $(X,Y)$ such that
$X, Y$ are uniformly distributed on $S$ and $T$, respectively, and
\[
\Pr(XY \notin D) \leq   2\delta + \Dfrac{\eps}{1-\eps}.  
\]

%   \mz{To prove reduction lemma, do we need to claim that $\Pr(Y\neq\hat Y)\leq 2\delta$?}
\end{corollary}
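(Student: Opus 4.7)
The plan is to invoke Theorem~\ref{T:couplingNew} (Strassen with deficiency) with the target deficiency $\epsilon = 2\delta + \eps/(1-\eps)$. Since the desired marginals are uniform, $\pi_S(\Omega) = |\Omega|/|S|$ and $\pi_T(N(\Omega)) = |N(\Omega)|/|T|$, so the whole task reduces to establishing the Hall-type inequality
\[
\frac{|\Omega|}{|S|} - \frac{|N(\Omega)|}{|T|} \;\leq\; 2\delta + \frac{\eps}{1-\eps}
\qquad \text{for every } \Omega \subseteq S.
\]

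Fix $\Omega \subseteq S$ and set $\Omega_g := \Omega \cap \GoodS$ and $U_g := (T \setminus N(\Omega)) \cap \GoodT$. The key observation is that the edges of $D$ incident to $\Omega_g$ are disjoint from the edges of $D$ incident to $U_g$: by the definition of $N(\Omega)$, every edge touching a vertex of $U_g \subseteq T \setminus N(\Omega)$ has its $S$-endpoint in $S \setminus \Omega$, whereas every edge touching $\Omega_g$ has its $S$-endpoint in $\Omega$. Consequently
\[
|D| \;\geq\; \sum_{x \in \Omega_g} \deg_D(x) + \sum_{y \in U_g} \deg_D(y)
\;\geq\; |\Omega_g|\cdot (1-\eps)\frac{|D|}{|S|} + |U_g|\cdot (1-\eps)\frac{|D|}{|T|},
\]
where the second step uses the defining degree lower bounds for $\GoodS$ and $\GoodT$. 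Dividing by $|D|$ produces the clean inequality $\tfrac{1}{1-\eps} \geq |\Omega_g|/|S| + |U_g|/|T|$.

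To finish I would apply the size hypotheses $|\GoodS| \geq (1-\delta)|S|$ and $|\GoodT| \geq (1-\delta)|T|$ to pass from the ``good'' quantities to their full versions: $|\Omega_g|/|S| \geq |\Omega|/|S| - \delta$ and $|U_g|/|T| \geq 1 - |N(\Omega)|/|T| - \delta$. Substituting these two lower bounds into the previous inequality and rearranging gives exactly the Hall-type inequality displayed at the start, after which Theorem~\ref{T:couplingNew} yields the desired coupling.

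The main obstacle is spotting this symmetric double-counting setup. There is no \emph{a priori} upper bound on individual degrees in $D$, so a direct attempt to convert the edge count $\sum_{x \in \Omega_g} \deg_D(x)$ into a lower bound on $|N(\Omega)|/|T|$ via a maximum-degree argument fails badly (the trivial bound $\deg_D(y) \leq |S|$ is far too weak). The escape is to look at the complementary set $T \setminus N(\Omega)$: its good part $U_g$ is \emph{forced} to send at least $|U_g|(1-\eps)|D|/|T|$ edges into $S \setminus \Omega$, so both $\Omega_g$ and $U_g$ compete for the same edge budget $|D|$. This packing argument is what makes both parameters $\eps$ and $\delta$ appear symmetrically in the conclusion.
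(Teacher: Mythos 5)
Your proof is correct and follows essentially the same route as the paper: both reduce to the Hall-type inequality via Theorem~\ref{T:couplingNew}, both exploit the fact that edges meeting $\varOmega$ and edges meeting $T\setminus N(\varOmega)$ are disjoint to bound their degree sums by $|D|$, and both pass from the ``good'' parts to the full sets using the $\delta$-size hypotheses. The only cosmetic difference is that you sum degrees over the good subsets $\Omega_g, U_g$ directly while the paper sums over $\varOmega$ and $T\setminus N(\varOmega)$ and then lower-bounds by the good part — logically identical.
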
 

\begin{proof}
% Let 
% \[
% d_S:= \Dfrac{|D|}{|S|} \qquad \text{and} \qquad  d_T := \Dfrac{|D|}{|T|}
% \]
% denote the average degrees of vertices in parts $S$ and $T$, respectively.
Let $\pi_S$ and $\pi_T$ denote the uniform measures on $S$ and $T$, respectively.
For any $\varOmega \subseteq S$,  by the assumptions and the definition of $\GoodS$, we have
\[
   \frac{1}{|D|} \sum_{x\in \varOmega} \deg_D(x)
    \geq (1-\eps)\frac{|\varOmega \cap \GoodS|}{|S|}  
    \geq (1-\eps) (\pi_S(\varOmega) -\delta).
\]
Similarly, 
swapping the roles of $S$ and $T$ and using $\varOmega' = T\setminus N(\varOmega)$,
we derive that
\[
\frac{1}{|D|} 
\sum_{y\in T\setminus N(\varOmega)} \deg_D(y)
\geq (1-\eps) ( \pi_T(\varOmega') -\delta)
=(1-\eps) ( 1- \pi_T(N(\varOmega)) -\delta).
\]
Since an edge of $D$ incident to a vertex of $\varOmega$  cannot be incident to a vertex from $T\setminus N(\varOmega)$, we derive that
\[
    \sum_{x\in \varOmega} \deg_D(x) +
    \!\sum_{y\in T\setminus N(\varOmega)}\! \deg_D(y) \leq |D|.
\]
Combining the three displayed equations above, we find that 
\[
1- 2\delta + \pi_S(\varOmega) - \pi_T(N(\varOmega)) \leq \Dfrac{1}{1-\eps}.
\]
This is equivalent to condition \eqref{eq:Hall} with 
$\epsilon := 2\delta+ \dfrac{\eps}{1-\eps}$. Applying Theorem \ref{T:couplingNew} completes the proof.
 \end{proof}

We use Corollary \ref{T:coupling}  to prove Lemma \ref{l:reduction} (the Reduction lemma) in the next section.  
 However, this result is of independent interest. For example, its extended version  (with slightly less optimal constants) was used to establish the distribution of the maximal number of common neighbours in random regular graphs; see \cite[Theorem 5.1]{IZ2024}.

\subsection{Proof of Lemma~\ref{l:reduction}}\label{S:reduction}
%\mi{Update the proof to mention the other events.}
%For any $d$, we let $\mathcal{S}_n(d):=\mathcal{R}_d(K_n)$ be the set of all $d$-regular graphs on $[n]$. 

%	It is sufficient to prove Lemma~\ref{l:reduction} for $x=a$ since it immediately implies the other two cases. Indeed, we get the statement of the lemma for $x=b$ by replacing $d_1$ with $n-d_1-d_2$,
%and the statement for $x=c$ by replacing $d_2$ with $n-d_1-d_2$.

Let $d:=d_1+d_2$. Consider a bipartite graph $D$ with parts $S=\mathcal{R}_d(K_n)$, which is the set of all $d$-regular graphs on $[n]$, and $T=\mathcal{S}_n(d_1,d_2)$, and the set of edges defined as follows: $G\in S$ is adjacent to all nodes $(G_1,G_2)\in T$ such that $G_1\cup G_2=G$. 

Assume first that the concentration condition from the Reduction lemma holds. This is equivalent to the existence  of $\bar\eps=\bar\eps(n)=o(1)$ such that $|\calR_{d_1}(\G_{d})|  \geq (1-\bar\eps) \E |\calR_{d_1}(\G_{d})|$ with probability tending to 1. Let $\delta=\delta(n)=o(1)$ be the fraction of $d$-regular graphs $G$ on $[n]$ such that $|\calR_{d_1}(G)|<(1-\bar\eps) \E |\calR_{d_1}(\G_{d})|$.
Clearly, every node from $T$ has degree 1 implying that $d_T=1$ and $\GoodT=T$.
Moreover, every node $G\in S$ has degree $|\mathcal{R}_{d_1}(G)|$ and thus $d_S=\E |\calR_{d_1}(\G_{d})|$.
We then get $|\GoodS|=(1-\delta)|S|$.
Corollary~\ref{T:coupling} gives the desired coupling of a uniformly random $X:=\G_{d}\sim\mathcal{G}(n,d)$ in $S$ and a uniformly random $Y=(Y_1,Y_2)\stackrel{d}=(\G_1,\G_2)$ in $T$ such that 
$$
\Pr(Y_1\cup Y_2\neq X)=\Pr(XY\notin D)\leq 2\delta + \dfrac{\bar\eps}{1-\bar\eps}=o(1).
$$

For the opposite direction, assume that $(X=\G_{d_1+d_2},Y=(\G_1,\G_2))$ satisfies 
$$
\Pr(X\neq\G_1\cup\G_2)=\Pr(XY\notin D)=o(1).
$$
Take any constant $\eps>0$ and consider the respective set 
$\GoodS=\{G\in \mathcal{G}(d_1+d_2)\st |\mathcal{R}_{d_1}(G)|\geq(1-\eps)d_S\}$. We have that
\begin{align*}
\Pr(X\notin\GoodS) & \leq\Pr(\G_1\cup\G_2\notin\GoodS)+\Pr(X\neq\G_1\cup\G_2)\\[1ex]
& =\Dfrac{|\{(G_1,G_2)\in\mathcal{S}_n(d_1,d_2)\st |\mathcal{R}_{d_1}(G_1\cup G_2)|<(1-\eps)d_S\}|}{|\mathcal{S}_n(d_1,d_2)|}+o(1)\\
&=\sum_{G\notin \GoodS}\Dfrac{|\mathcal{R}_{d_1}(G)|}{|D|}+o(1)<(1-\eps)\sum_{G\notin \GoodS}\Dfrac{d_S}{|D|}+o(1)\\
&=(1-\eps)\frac{|S|-|\GoodS|}{|S|}+o(1)=
(1-\eps)\Pr(X\notin\GoodS)+o(1).
\end{align*}
Therefore, $\Pr(X\notin\GoodS)=o(1)$ as required.

\subsection{Proof of Theorem \ref{T:main1}(i)}\label{S:Thm_i}

%	Assume that the requirements on $d_1=1 ,d_2$ given in Theorem~\ref{T:main1}(i) 
%are met. 

Recall our  assumptions that  $d_1=1$, $d_2=o(n^{1/3})$, and $d_2\rightarrow \infty$. Let $d_3:=n-1-d_1-d_2$.
Using   Lemma \ref{l:reduction},  we first observe that it is sufficient to establish that, for any fixed $\eps>0$,  
\begin{align}
\label{eq:first}     |\calR_{d_1}(\G_{d_1+d_2})| &\geq (1-\eps) \E |\calR_{d_1}(\G_{d_1+d_2})|,\\
\label{eq:second}      |\calR_{d_2}(\G_{d_2+d_3})| &\geq (1-\eps) \E |\calR_{d_2}(\G_{d_2+d_3})|, \\
|\calR_{d_1}(\G_{d_1+d_3})| &\geq (1-\eps) \E |\calR_{d_1}(\G_{d_1+d_3})|
\label{eq:third}  
\end{align}
are events with probability tending to $1$. Here,  $\G_{d_2+d_3}\sim \calG(n,d_2+d_3)$
and $\G_{d_1+d_3}\sim \calG(n,d_1+d_3)$.
Indeed, if these hold then Lemma \ref{l:reduction} implies the existence of couplings 
$(\G_1 \cup \G_2, \G_{d_1+d_2})$, $(K_n \setminus \G_1, \G_{d_2+d_3})$, and 
$(K_n \setminus \G_2, \G_{d_1+d_3})$ such that
\begin{align*}
\Pr(\G_1 \cup \G_2 = \G_{d_1+d_2}) = 1-o(1),\\
\Pr(K_n \setminus \G_1 = \G_{d_2+d_3}) = 1-o(1),\\
\Pr(K_n \setminus \G_2 = \G_{d_1+d_3}) = 1-o(1).
\end{align*} 
In particular,  
 we get $\G_1 = K_n \setminus \G_{d_2+d_3} \sim \calG(n,d_1)$
 and $\G_2 =  K_n \setminus \G_{d_1+d_3} \sim \calG(n,d_2)$ with probability  $1-o(1)$.
Thus, combining these couplings we  get  the required coupling $(\G_1,\G_2,\G_{d_1},\G_{d_2},\G_{d_1+d_2})$.

Note that  \eqref{eq:second} is immediate since $d_2+d_3 = n-2$ and all $(n-2)$-regular graphs are isomorphic. 
Recalling that $d_2\rightarrow \infty$, \eqref{eq:first} 
%Theorem~\ref{T:main1}(i)
follows from Chebyshev's inequality and the estimates of \cite{Gao2023} stated below for convenience.
\begin{theorem}[Gao~\cite{Gao2023}]\label{G2023}
If $d\geq 3$ and $d=o(n^{1/2})$ then
%$$
%\E |\mathcal{R}_{1}(\G_d)|=\frac{n!}{2^{n/2}(n/2)!}\left(\frac{e}{n}\right)^{n/2}\left(\frac{d-1}{d}\right)^{\frac{d-1}{2}n}d^{\frac{n}{2}}\exp\left(\frac{1}{4}+O\left(\frac{d^3}{n}\right)\right),
%$$
$$
\E |\mathcal{R}_{1}(\G_d)|^2=\biggl(1+\Dfrac{1}{6d^3}+O\Bigl(d^{-4}+\Dfrac{d^3}{n}
+\sqrt{\dfrac{d}{n}}\log^3n\Bigr)\biggr)\bigl(\E |\mathcal{R}_{1}(\G_d)|\bigr)^2.
$$
\end{theorem}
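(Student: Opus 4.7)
The plan is to compute $\E\,|\mathcal{R}_1(\G_d)|^2$ by a second-moment expansion in the pairing (configuration) model, and then transfer to $\mathcal{G}(n,d)$ by conditioning on simplicity. Write $M_d:=|\mathcal{R}_1(\G_d)|$. Direct counting in the pairing model $\mathcal{P}(n,d)$ shows that, for any perfect matching $M$ on $[n]$,
\[
\Pr_P(M\subseteq P)=\Dfrac{d^{n}(nd-n-1)!!}{(nd-1)!!},
\]
and for any pair $(M_1,M_2)$ of perfect matchings with $s:=|M_1\cap M_2|$,
\[
\Pr_P(M_1\cup M_2\subseteq P)=\Dfrac{d^{n}(d-1)^{n-2s}(nd-2n+2s-1)!!}{(nd-1)!!}.
\]
The crucial simplification is that the second probability depends on $(M_1,M_2)$ only through $s$, because the degree sequence of $M_1\cup M_2$ is determined by $s$ alone. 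Consequently, in the pairing model the ratio of moments collapses to a one-dimensional sum $\sum_{s\geq 0} q(s)r(s)$, where $q(s)$ is the probability that two independent uniform random perfect matchings share exactly $s$ edges (converging to $\mathrm{Po}(1/2)$) and $r(s)$ is an explicit ratio of double factorials amenable to Stirling expansion.

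Next, I would transfer from the pairing model to $\mathcal{G}(n,d)$ via the standard identity $\Pr(\mathcal{G}(n,d)=G)=\Pr(P=G\mid P\ \text{simple})$ and apply the small-subgraph conditioning framework of Robinson--Wormald and Janson. Conditioning on simplicity (i.e.\ $Y_1=Y_2=0$, no loops or doubled edges in the pairing) couples the matching count with the cycle counts $Y_k$ for $k\geq 3$. For perfect matchings, a direct computation (contract a $k$-cycle and count matchings in the reduced pairing) gives the correlation parameters $\delta_k=(-1/(d-1))^k$ and the Poisson mean $\lambda_k=(d-1)^k/(2k)$, so that $\lambda_k\delta_k^2=1/(2k(d-1)^k)$. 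Combining with the pairing-model ratio,
\[
\Dfrac{\E M_d^2}{(\E M_d)^2}=1+\sum_{k\geq 3}\Dfrac{1}{2k(d-1)^k}+\text{errors}=1+\Dfrac{1}{6d^{3}}+O(d^{-4})+\text{errors},
\]
the leading contribution coming from triangles; the $k=1,2$ terms are absent because simplicity excludes loops and doubled edges.

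The remaining error terms $O(d^{3}/n+\sqrt{d/n}\,\log^{3}n)$ require quantitative bounds going beyond the qualitative form of SSC. The $O(d^{3}/n)$ contribution tracks Stirling truncation errors in the pairing-model ratio $r(s)$ and in the Poisson approximation of cycle counts, while the $O(\sqrt{d/n}\,\log^{3}n)$ contribution reflects concentration of the short-cycle vector $(Y_3,\ldots,Y_K)$ with $K=O(\log n)$, controlled via cumulant estimates in the pairing model. I expect the main obstacle to be making the small-subgraph conditioning quantitatively effective in the growing-$d$ regime: classical SSC is qualitative and asymptotic-in-$n$ for fixed $d$, so one must establish uniform control on the joint cumulants of $\tilde M_d$ with the short-cycle vector as $d=o(n^{1/2})$. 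This amounts to a careful combinatorial enumeration of ``matching-and-cycle'' configurations in the pairing model, together with tail bounds exploiting cancellations between structures.
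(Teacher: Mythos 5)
The paper does not prove this statement: it is quoted verbatim from Gao~\cite{Gao2023} in Section~\ref{S:Thm_i} and used there as a black box (together with Chebyshev's inequality) to establish~\eqref{eq:first}. There is therefore no internal argument to compare your proposal against.

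Judged on its own, your outline identifies the right objects. The pairing-model probabilities you write down are exact, the reduction to a one-variable sum over $s=|M_1\cap M_2|$ is correct (the degree sequence of $M_1\cup M_2$ depends only on $s$), and the small-subgraph-conditioning parameters $\lambda_k=(d-1)^k/(2k)$ and $\delta_k=(-1/(d-1))^k$ are indeed the right ones for perfect matchings, so that $\sum_{k\ge 3}\lambda_k\delta_k^2=\frac{1}{6(d-1)^3}(1+o(1))=\frac{1}{6d^3}+O(d^{-4})$ reproduces the leading correction. The gap --- which you flag but do not close --- is the passage from this heuristic to a quantitative statement uniform over $3\le d=o(n^{1/2})$. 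Two concrete issues must be handled. First, the sum $\sum_s q(s)r(s)$ is \emph{not} dominated by $s=O(1)$: since $r(s+1)/r(s)\approx n(d-2)/(d-1)^2$, the dominant contributions sit near $s\asymp n/d\to\infty$, where the Poisson$(1/2)$ description of $q(s)$ no longer applies and one must carry uniform Stirling estimates of both $q(s)$ and $r(s)$ across the whole range (with careful cancellation of the $\exp(\Theta(n/d))$-size factors). Second, classical small-subgraph conditioning is a qualitative $n\to\infty$, $d$-fixed theorem and gives no rate of convergence; the error terms $O(d^3/n)$ and $O(\sqrt{d/n}\,\log^3 n)$ cannot be read off from it, and producing them uniformly in growing $d$ is essentially the technical content of Gao's paper. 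As written, your sketch names the right combinatorial structure and constants but does not establish the theorem.
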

%
%	From Theorem \ref{G2023}, % we find that,
%	if $d \to\infty$ then 
%	$$
%		\E|\mathcal{R}_{1}(\G_d)|^2=(1+o(1))\,\bigl(\E|\mathcal{R}_{1}(\G_d)|\bigr)^2,
%	$$
%	since we also have the assumption $d=o(n^{-1/3})$. Thus, by Chebyshev's inequality,
%	 $|\mathcal{R}_{1}(\G_d)|=(1+o(1))\E|\mathcal{R}_{1}(\G_d)|$ with probability tending to 1. %Theorem~\ref{T:main1}(i)
%	Conjecture~\ref{con1}(a) follows from Lemma~\ref{l:reduction}.

Finally, observe that $|\mathcal{R}_{d_1}( \G_{d_1+d_3})|$ is the number of perfect matchings that avoid edges of $\G_{d_2}$.  Then,  \eqref{eq:third} is straightforward  from the result stated below, which  is the special case of  \cite[Theorem 4.6]{McKay1985} for the regular case and small degrees.

\begin{theorem}[McKay~\cite{McKay1985}]\label{M1985}
Let $G_1$ be a $d_1$-regular graph and $G_2$ be a $d_2$-regular graph on the same vertex set $[n]$. Let $d_1(d_1+d_2)=o(\sqrt{d_1 n})$. Then the number of graphs on $[n]$ isomorphic to $G_1$ that avoid edges of $G_2$ equals
$$
\frac{(nd_1)!}{(nd_1/2)! \, 2^{nd_1/2}(d_1!)^n}
\exp\biggl(-\Dfrac{d_1-1}{2}-\Bigl(\Dfrac{d_1-1}{2}\Bigr)^2-\Dfrac{d_1d_2}{2}+o(1)\biggr).
$$
\end{theorem}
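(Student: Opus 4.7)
The plan is to use the configuration (pairing) model combined with a Poisson-moment, or equivalently switching, analysis that tracks three kinds of ``defects'': loops, repeated pairs, and pairs realising edges of $G_2$. I would identify each vertex of $[n]$ with a bucket of $d_1$ half-edges, giving $N := nd_1$ points in total, and view a uniform random perfect pairing as a random $d_1$-regular multigraph. Since the number of pairings is $P(N) = N!/((N/2)!\,2^{N/2})$ and each simple labeled $d_1$-regular graph on $[n]$ avoiding $G_2$ corresponds to exactly $(d_1!)^n$ defect-free pairings, the count we want equals
\[
\frac{P(N)}{(d_1!)^n}\cdot \Pr\bigl(\text{no loop, no repeated pair, no pair realising an edge of } G_2\bigr).
\]

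The core step is to show that the three defect counts $L$, $M$, $F$ are jointly asymptotically Poisson with means
\[
\mu_L = \tfrac{d_1-1}{2},\qquad \mu_M = \bigl(\tfrac{d_1-1}{2}\bigr)^{\!2},\qquad \mu_F = \tfrac{d_1 d_2}{2},
\]
which I would compute directly from the bucket structure using $\Pr[\text{a given pair}] = 1/(N-1)$ and summing over loops inside a bucket, double edges between bucket pairs, and pairings realising each of the $nd_2/2$ edges of $G_2$. Joint Poisson convergence then yields $\Pr(L=M=F=0) = \exp(-\mu_L-\mu_M-\mu_F+o(1))$, which, combined with $P(N)/(d_1!)^n$, matches the claimed formula (the terms $-\tfrac{d_1-1}{2} - (\tfrac{d_1-1}{2})^2 - \tfrac{d_1 d_2}{2}$ appearing in the exponent correspond precisely to the three means). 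I would establish the joint Poisson limit by the method of mixed factorial moments, showing $\E[(L)_i (M)_j (F)_k] \to \mu_L^i\mu_M^j\mu_F^k$; alternatively, one can invoke McKay's switching method, defining local switchings that remove a single defect of each type and verifying that the forward-to-reverse ratio of switchings equals the corresponding mean up to $1+o(1)$.

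The main obstacle is controlling the interaction between the three defect types and proving joint asymptotic independence with enough precision that the error in the exponent is $o(1)$, not merely $o(\mu_L + \mu_M + \mu_F)$. A single switching can in principle create or destroy several defects at once, and the mixed factorial moments involve delicate overcounting corrections whenever two defect configurations share a half-edge or pass through a common vertex. The hypothesis $d_1(d_1+d_2) = o(\sqrt{d_1 n})$ is calibrated exactly so that these cross terms vanish: it ensures that the probability of a vertex participating in two distinct defects simultaneously is of smaller order than $1/(\mu_L+\mu_M+\mu_F)$, which is what is needed for the additive $o(1)$ error in the exponent. Once this uniform control is in place, the three Poisson factors combine cleanly with $P(N)/(d_1!)^n$ to produce the stated formula.
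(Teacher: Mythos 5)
This statement is not proved in the paper; it is quoted as the regular, small-degree special case of Theorem~4.6 of McKay~(1985), so the relevant comparison is with McKay's original argument. Your overall framework is the right one: the configuration model with $N=nd_1$ points, the prefactor $P(N)/(d_1!)^n$, the three defect types, and the means $\mu_L=\frac{d_1-1}{2}$, $\mu_M=\bigl(\frac{d_1-1}{2}\bigr)^2$, $\mu_F=\frac{d_1d_2}{2}$ are all correctly identified, and this assembly does reproduce the claimed formula.

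The genuine gap is in the primary route you propose, namely joint Poisson convergence via mixed factorial moments. Under $d_1(d_1+d_2)=o(\sqrt{d_1n})$ the degree $d_1$ may grow (up to $o(n^{1/3})$), so $\mu_L,\mu_M,\mu_F$ can all tend to infinity; in the application in this paper one has $d_1=1$ but $\mu_F=d_2/2\to\infty$. When the means diverge, showing $\E\bigl[(L)_i(M)_j(F)_k\bigr]\to\mu_L^i\mu_M^j\mu_F^k$ for each \emph{fixed} $(i,j,k)$ does not yield $\Pr(L=M=F=0)=e^{-\mu_L-\mu_M-\mu_F+o(1)}$: one would need multiplicative error bounds on the factorial moments that are uniform in $(i,j,k)$ up to roughly the scale of the means, plus a truncated inclusion--exclusion, and that is a materially harder estimate than what you sketch. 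The difficulty you flag (cross-interactions among defects) is real but secondary; the more basic obstruction is the growth of the means themselves. Your alternative --- McKay's switching method, directly bounding the ratio of the number of pairings with $k$ versus $k-1$ defects of each type --- is precisely what the cited paper does, and it is designed to avoid this issue. To make the proposal sound, the switching argument has to be promoted from an afterthought to the main engine of the proof.
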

%	
%	From Theorem~\ref{M1985} it follows that 
%	$$
%		|\mathcal{R}_1(K_n\setminus\G_{d_2})|=(1+o(1))\,
%		\frac{n!}{(n/2)!\, 2^{n/2}}\,
%		e^{-\frac{d_2}{2}}.
%	$$
%	Conjecture~\ref{con1}(c) follows from Lemma~\ref{l:reduction}.

%\mi{The proof is not completed, we still need to prove parts (b) and (c) of Conjecture \ref{con1}}

%%%%%%%%%%%%%%%%%%%%%%%%%%%%%%
%%%%%%%%%%%%%%%%%%%%%%%%%%%%%%
%%%%%%%%%%%%%%%%%%%%%%%%%%%%%%
%%%%%%%%%%%%%%%%%%%%%%%%%%%%%%
%%%%%%%%%%%%%%%%%%%%%%%%%%%%%%
%%%%%%%%%%%%%%%%%%%%%%%%%%%%%%

%%%%%%%%%%%%%%%%%%%%%%%%%%%%%%%%%%%%%%%%%%%%%%%%%%%%%%%%%%%%
%%%%%%%%%%%%%%%%%%%%%%%%%%%%%%%%%%%%%%%%%%%%%%%%%%%%%%%%%%%%
%%%%%%%%%%%%%%%%%%%%%%%%%%%%%%%%%%%%%%%%%%%%%%%%%%%%%%%%%%%%
%%%%%%%%%%%%%%%%%%%%%%%%%%%%%%%%%%%%%%%%%%%%%%%%%%%%%%%%%%%%
%%%%%%%%%%%%%%%%%%%%%%%%%%%%%%%%%%%%%%%%%%%%%%%%%%%%%%%%%%%%
%%%%%%%%%%%%%%%%%%%%%%%%%%%%%%%%%%%%%%%%%%%%%%%%%%%%%%%%%%%%

\section{Sprinkling dense with dense}\label{S3}

In this section,  we consider the dense case when all degrees (and the complement degrees) are $\omega\Bigl(\dfrac{n}{\log n}\Bigr)$ and give the proofs of  Theorem \ref{T:main1}(ii) and  Theorem \ref{T:main2}.
We use much of the machinery of \cite{GIM2022}, specifically the results on asymptotic  enumeration of subgraphs with a given degree sequence $\hvec$ of a sufficiently dense graph $G$. For our purposes, we only need to consider $d$-regular $G$ and regular $\hvec= (h,\ldots,h)^T$. 

%%%%%%%%%%%
%%%%%%%%%%%
%%%%%%%%%%%
\subsection{Preliminaries}
We start by recalling the definitions from \cite{GIM2022}. Let 
\[
\drat :=  \frac{1}{dn}  \sum_{j\in [n]} h_j,
\]
which is the relative density of a subgraph with degree sequence $\bm{h}=
(h_1,\ldots,h_n)^T$ in a graph $G$. 
%  and define a sequence $\bm{\beta}$ such that
% \begin{align*}
% 	\drat_{jk} = \drat_{jk}(\bm{\beta}) = \frac{e^{\beta_j + \beta_k}}{1 + e^{\beta_j + \beta_k}} \mbox{ for all edges $jk \in S$}.
% \end{align*}
The following system of equations for $\betavec\in \Reals^n$ is of crucial importance for the asymptotic enumeration of 
 such subgraphs in \cite[Section 5]{GIM2022}:
\begin{align}\label{eqn:system}
h_j = \sum_{k \st jk \in G} \Dfrac{e^{\beta_j + \beta_k}}{1 + e^{\beta_j + \beta_k}} \quad \mbox{for all $j \in [n]$.}
\end{align}
This system expresses that a random subgraph with independent edge probabilities  $\dfrac{e^{\beta_j + \beta_k}}{1 + e^{\beta_j + \beta_k}}$ 
 has expected degree sequence $\hvec$.
For the regular case when $h_j = h$ for all $j \in [n]$,  to solve \eqref{eqn:system}, we can take $\bm{\beta}=(\beta,\ldots,\beta)$ 
with $\beta := \frac{1}{2}\log \frac{\drat}{1-\drat}$.
%	For  the regular case,  we can choose $\bm{\beta}$ to be the vector with all components equal to 
% $\beta := \frac{1}{2}\log \frac{\drat}{1-\drat}$ as a solution of \eqref{eqn:system}. Let 
Define
\begin{equation}\label{def:uv}
\begin{aligned}
u(\bm{\theta}) &= \dfrac{1}{24} \drat(1-\drat)(1 - 6\drat + 6\drat^2) \sum_{jk \in G} (\theta_j + \theta_k)^4, 
\\[-1ex]
v(\bm{\theta}) &= \dfrac{1}{6} \drat (1-\drat)(1-2\drat)\sum_{jk \in G}(\theta_j +\theta_k)^3. 
\end{aligned}
\end{equation}
Recall that the signless Laplacian matrix $Q=Q(G)$ of a graph $G$ is defined by 
\begin{align*}
\bm{\theta}^T \!{Q} \bm{\theta} =  \sum_{jk \in G} (\theta_j + \theta_k)^2.
\end{align*}

The following result is a special case of \cite[Theorem 11]{GIM2022} for the regular case. 
Note that the theorem in~\cite{GIM2022} is stated for the Gaussian vector
$\X' = \Bigl( \dfrac{2}{\lambda(1-\lambda)}\Bigr)^{1/2}\X$,
which has density proportional to
$e^{-\frac12 \lambda(1-\lambda)\xvec^T\! Q \xvec}$,
instead of the vector $\X$ used in this paper, so its moments have to be scaled correspondingly. Also, since we only consider the regular case, there is no need to assume the existence of a solution of the system \eqref{eqn:system}:  we just take $\betavec =  (\beta,\ldots, \beta) \in \Reals^n$ described above.
% Note that the statement in \cite{GIM2022} is slightly different: it involves the scaled Gaussian vector 
% $\X' = \Bigl( \dfrac{2}{\lambda(1-\lambda)}\Bigr)^{1/2}  \X$, which has density proportional to
% $e^{-\frac12 \lambda(1-\lambda)\xvec^T\! Q \xvec}$, instead of the vector $\X$ whose
% density is proportional to $e^{-\xvec^T\! Q \xvec}$ that we use in this paper.
%	Note that one considers in \cite{GIM2022}  the scaled Gaussian vector 
%	$\X' = \left( \dfrac{2}{\lambda(1-\lambda)}\right)^{1/2}   \X$ instead of $\X$, which density is proportional to 
%	$e^{-\frac12 \lambda(1-\lambda)\xvec^T\! Q \xvec}$.

\begin{theorem}[Gao, Isaev, McKay, \cite{GIM2022}] \label{thm:jane-misha-brendan}
Let $\eps$ and $\gamma$ be fixed positive constants. Let  $nh$ and $nd $ be even for some positive integers $1\leq h \leq d<n$.  Suppose a $d$-regular graph $G$ satisfies the following assumptions:
\begin{enumerate}\itemsep=0pt
\item[(i)] for any two distinct vertices $j$ and $k$, we have 
\[
\Dfrac{\gamma d^2}{n} \leq |\{\ell \st j \ell \in G \mbox{ and } k \ell \in G\}| \leq \Dfrac{d^2}{\gamma n};
\]
\item[(ii)] $\drat(1 - \drat) d  = \omega\left(\dfrac{n}{\log n}\right)$, where $\drat = h/d$.
\end{enumerate}
Let $\bm{X}$ be a random variable with  density $\pi^{-n/2} (\det{Q})^{1/2} e^{-\xvec^T {Q} \xvec}$.
Then, as $n \rightarrow \infty$, 
\begin{align*}
|\calR_h(G)| = \Dfrac{2   \left(\lambda^{\lambda }  (1 - \drat)^{1-\lambda}\right)^{-dn/2} }{(2\pi \drat(1-\drat))^{n/2}    (\det Q)^{1/2}}
\exp \biggl( \Dfrac{4\,\Exp{u(\bm{X})}}{\drat^2(1-\drat)^2} - \Dfrac{4\,\Exp{v^2(\bm{X})}}{ \drat^3(1-\drat)^3} + O(n^{-1/2 + \eps}) \biggr),
\end{align*}
where the constant implicit in $O(\cdot)$ depends on $\gamma$ and $\eps$ only.
\end{theorem}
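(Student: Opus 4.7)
The plan is to count $|\calR_h(G)|$ by expressing it as a coefficient of a generating function and extracting that coefficient via a multidimensional Cauchy integral. Setting $f(\xvec) := \prod_{jk\in G}(1 + x_j x_k)$, the number of $h$-regular spanning subgraphs of $G$ equals the coefficient of $\prod_j x_j^h$ in $f$. Applying Cauchy's formula on a polytorus with radii $e^{\beta}$ chosen via the saddle-point system~\eqref{eqn:system}, one arrives at
\[
|\calR_h(G)| = \frac{e^{-nh\beta}}{(2\pi)^n} \int_{[-\pi,\pi]^n} \exp\!\Bigl(\sum_{jk\in G} \log\bigl(1 + e^{2\beta + i(\theta_j+\theta_k)}\bigr) - ih\sum_j \theta_j\Bigr) d\thetavec.
\]
The choice $\beta = \tfrac12\log\tfrac{\drat}{1-\drat}$ makes $\thetavec = 0$ a critical point and kills the linear term in $\thetavec$; this is the classical complex-analytic setup for asymptotic enumeration of dense constrained structures.

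The next step is a local/global split. In the local region, say $\|\thetavec\|_\infty \leq n^{-1/2+\eps'}$, I would Taylor-expand the exponent about $\thetavec = 0$. The quadratic term equals $-\drat(1-\drat)\,\thetavec^T Q \thetavec$, so Gaussian integration yields the factor $\pi^{n/2}(\det Q)^{-1/2}(\drat(1-\drat))^{-n/2}$, which together with the prefactor $e^{-nh\beta}$ reproduces the leading $(\lambda^\lambda(1-\lambda)^{1-\lambda})^{-dn/2}/((2\pi\drat(1-\drat))^{n/2}(\det Q)^{1/2})$ in the theorem. The cubic and quartic Taylor coefficients are precisely $-iv(\thetavec)$ and $-u(\thetavec)$ from~\eqref{def:uv}, up to normalisation by powers of $\drat(1-\drat)$. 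Rewriting the integrand as a Gaussian expectation against $\X$ and applying the cumulant expansion $\log \Exp{e^{iY}} = -\tfrac12\Exp{Y^2} + (\text{higher cumulants})$ to the cubic correction $Y$ proportional to $v(\X)$, one extracts the correction factor $\exp\bigl(4\Exp{u(\X)}/(\drat^2(1-\drat)^2) - 4\Exp{v^2(\X)}/(\drat^3(1-\drat)^3)\bigr)$. Quintic and higher Taylor terms contribute at most $O(n^{-1/2+\eps})$ after integration, by moment bounds on $\X$ that follow from the spectral control of $Q$.

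The hard part will be the tail estimate: showing that the integral over the complement of the local region is smaller than the main contribution by a superpolynomial factor. This reduces to an upper bound of the form
\[
\sum_{jk\in G}\log|1+e^{2\beta+i(\theta_j+\theta_k)}|^2 \leq |G|\log(1+e^{2\beta})^2 - c\,\thetavec^T Q \thetavec
\]
with a usable constant $c>0$ throughout the tail region, and the crucial input is a lower bound of order $d$ on the non-trivial eigenvalues of $Q$, reflecting that $G$ is pseudorandom. Codegree assumption~(i), combined with a spectral calculation, supplies exactly this gap; density assumption~(ii) then guarantees that the tail is exponentially small in $nd/\log n$ and hence negligible against the saddle-point contribution. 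A technical intermediate annulus, where the Taylor expansion and the spectral bound must be bridged, is typically handled by a truncation/smoothing argument as in the broader programme of \cite{GIM2022,MW1,MW2}. The regularity of $G$ is what keeps the bookkeeping here manageable: one does not have to solve~\eqref{eqn:system} by iteration, since the symmetric ansatz $\betavec=(\beta,\ldots,\beta)^T$ is exact.
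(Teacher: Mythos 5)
The paper does not prove this theorem; it imports it verbatim as a special case of \cite[Theorem~11]{GIM2022} for regular $G$, after noting that the symmetric ansatz $\betavec=(\beta,\ldots,\beta)^T$ solves the system~\eqref{eqn:system} exactly and that the Gaussian vector must be rescaled to the $\X$ used here. There is therefore no in-paper derivation to compare against. What you have written is, in outline, the internal proof of the cited result: generating function $\prod_{jk\in G}(1+x_jx_k)$, Cauchy integral on a polytorus, symmetric saddle, Gaussian main term plus the $u,v$ corrections, and a tail bound supplied by the spectral gap that the codegree condition guarantees. That is the correct programme. A few details are off, though. The Cauchy integrand has a second equal saddle at $\thetavec=\pi\onevec$ when $nh$ is even, which supplies the explicit factor $2$ in the stated formula, and your local analysis at the origin alone does not see it. The quadratic Taylor coefficient is $-\tfrac12\drat(1-\drat)\thetavec^T Q\thetavec$, not $-\drat(1-\drat)\thetavec^T Q\thetavec$; tracking the missing $\tfrac12$ is what makes the Gaussian normalisation match. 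Most substantially, the intermediate region between the local box and the far tail, together with the justification that the cumulant truncation really yields error $O(n^{-1/2+\eps})$, is where the bulk of the work in \cite{GIM2022} lies, and in your sketch these are asserted rather than argued.
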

Our proof of Theorem \ref{T:main1}(ii) and  Theorem \ref{T:main2} relies on  the following corollary of Theorem \ref{thm:jane-misha-brendan}, which we establish in the next two subsections.
\begin{corollary}\label{cor-jmb}
Let   $nh$ and $nd $ be even and $\drat(1 - \drat) d  = \omega\Bigl(\dfrac{n}{\log n}\Bigr)$. Then
\begin{itemize}\itemsep=0pt
\item[(a)] If $d\geq \alpha n$ for some $\alpha>\frac12$ then 
\[
|\calR_h(G)| = \Dfrac{  \left(\lambda^{\lambda }  (1 - \drat)^{1-\lambda}\right)^{-dn/2} }{(2\pi d \drat(1-\drat))^{n/2}  }  e^{o(\log n)}.
\]
\item[(b)] If all pairs of distinct vertices of $G$ have 
$\dfrac{d^2}{n} + o\left(\dfrac{d^3}{n^2}\right)$ common neighbors then 
\[
|\calR_h(G)| = \Dfrac{   \left(\lambda^{\lambda }  (1 - \drat)^{1-\lambda}\right)^{-dn/2 }}
{(2\pi d \drat(1-\drat))^{n/2}} 2^{\frac12}
\exp\biggl(  \Dfrac{n}{12d} \Bigl(1- \Dfrac{1}{\lambda(1-\lambda)}\Bigr)+\dfrac{1}{4}   + o(1)\biggr).
\]
\end{itemize}
\end{corollary}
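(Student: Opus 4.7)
The plan is to apply Theorem~\ref{thm:jane-misha-brendan} to $G$ and simplify the resulting formula in each case. Hypothesis~(ii) is our standing assumption. For hypothesis~(i): in case~(a), $d \geq \alpha n$ with $\alpha > 1/2$ and inclusion--exclusion give between $2d - n \geq (2\alpha-1)n$ and $d$ common neighbours for every pair, so (i) holds with $\gamma = \gamma(\alpha) \in (0,1)$; in case~(b), the hypothesis on common neighbours directly implies (i) with any $\gamma \in (0,1)$ once $n$ is large. Two quantities in the theorem's conclusion then need simplification: $\det Q$ with $Q = dI + A(G)$, and the Gaussian moments $\Exp{u(\X)}$, $\Exp{v^2(\X)}$ with $\X$ of covariance $(2Q)^{-1}$.

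Determinant analysis. The eigenvalues of $Q$ are $2d$ on $\mathbf{1}$ and $d + \lambda_i$ for $i \geq 2$, where $\lambda_i$ are the non-trivial eigenvalues of $A$. From $\sum_{i\geq 2}\lambda_i^2 = d(n-d)$ we get $\max_{i\geq 2}|\lambda_i|/d \leq \sqrt{(n-d)/d}$, which in~(a) is bounded by a constant strictly less than $1$, and in~(b) (using the sharper common-neighbour assumption) is $o(1)$. Either way the Taylor series for $\log(1 + \lambda_i/d)$ converges. Using also $\sum_{i\geq 2}\lambda_i = -d$, part~(a) yields $\log\det Q = n\log d + O(1) = n\log d + o(\log n)$. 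For part~(b), the uniform bound $\mu_{jk} = d^2/n + o(d^3/n^2)$ pins down the triangle count as $T = d^3/6 + o(d^4/n)$ and gives analogous control of $\tr A^4 - d^4$, leading to the sharper
\[
\log \det Q = n\log d + \log 2 - \tfrac{1}{2} - \tfrac{n}{2d} + o(1).
\]

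Gaussian moment analysis. Isserlis' theorem expresses $\Exp{u(\X)}$ and $\Exp{v^2(\X)}$ as polynomials in the entries of $Q^{-1}$, which are controlled by the Neumann series $Q^{-1} = d^{-1}\sum_{k\geq 0}(-A/d)^k$ and the walk counts it encodes. In both cases $(Q^{-1})_{jj} = 1/d + O(1/d^2)$ and $|(Q^{-1})_{jk}| = O(1/d^2)$. For part~(a) these crude bounds give $\Exp{u(\X)} = O(n\lambda(1-\lambda)/d)$ and $\Exp{v^2(\X)} = O(n\lambda^2(1-\lambda)^2/d)$, so after the scalings appearing in the theorem both contributions are $O(n/(d\lambda(1-\lambda)))$, which is $o(\log n)$ by hypothesis~(ii) (since $\lambda(1-\lambda) = \omega(n/(d\log n))$); this produces the $e^{o(\log n)}$ factor in~(a). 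For part~(b) we compute both moments to additive precision $o(1)$: $\Exp{u(\X)}$ by direct substitution of the first Neumann terms, and $\Exp{v^2(\X)}$ by splitting the double sum $\sum_{jk, j'k' \in G}\Exp{(X_j+X_k)^3(X_{j'}+X_{k'})^3}$ according to whether the two edges coincide, share one vertex, or are vertex-disjoint, and summing each piece using the expanded $Q^{-1}$. Combining with the sharper $\det Q$ expansion, the extraneous $\log 2$, $-1/2$ and $-n/(2d)$ cancel against corresponding pieces from the Gaussian terms, leaving precisely the factor $2^{1/2}\exp\bigl(\tfrac{n}{12d}\bigl(1-\tfrac{1}{\lambda(1-\lambda)}\bigr) + \tfrac{1}{4} + o(1)\bigr)$ stated in~(b).

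The main difficulty lies in case~(b): all three contributions ($\det Q$, $\Exp{u(\X)}$ and $\Exp{v^2(\X)}$) carry terms of size $\Theta(n/d)$ that are not individually negligible, so each must be handled to genuine $o(1)$ accuracy, and the stated constants $1/4$ and $1/12$ in the final formula emerge only after delicate cancellation among them. The uniformity of the bound $\mu_{jk} = d^2/n + o(d^3/n^2)$ is exactly what is required to pin down the fourth-order walk counts in $G$ to this precision.
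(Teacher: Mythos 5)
Your overall plan is the same as the paper's: apply Theorem~\ref{thm:jane-misha-brendan}, then estimate $\det Q$ via traces/eigenvalues (the paper's Lemma~\ref{lem:detq}) and the Gaussian moments via Isserlis' theorem and the entries of $Q^{-1}$ (the paper's Lemma~\ref{lem:qinv} and Lemma~\ref{lem:exp-u-v2}, with the crude dense-case bound of Lemma~\ref{lem:exp-u-v} for part~(a)). The determinant analysis is correct and matches Lemma~\ref{lem:detq} in both cases.

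There is, however, a genuine gap in the moment analysis. You write $Q^{-1}=d^{-1}\sum_{k\ge 0}(-A/d)^k$, but this series does not converge: $A$ has eigenvalue $d$ on the vector $\onevec$, so $-A/d$ has spectral radius exactly $1$, and the partial sums oscillate on the $\onevec$-component. The paper's Lemma~\ref{lem:qinv} avoids this by considering $Q\bigl(I-\frac{A}{d}+\frac{J}{2n}\bigr)$, where the $\frac{J}{2n}$ term absorbs the eigenvalue-$2d$ direction of $Q$; after this correction one gets a convergent Neumann expansion in a genuinely small matrix $Y$. The point is not cosmetic: the resulting inverse carries a $\frac{J}{2dn}$-type contribution, visible as the $\frac{1}{2n}$ terms in Lemma~\ref{lem:qinv}, that your formula omits. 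For part~(a) this does not hurt, since only the orders $(Q^{-1})_{jj}=\frac1d+O(1/d^2)$ and $(Q^{-1})_{jk}=O(1/d^2)$ are needed and both survive. But for part~(b) it is fatal: when computing $\Exp{v^2(\X)}$ to additive precision $o(1)$, the vertex-disjoint pairs $jk,\ell m\in G$ contribute
$\sum 3\sigma_{jk,jk}\sigma_{\ell m,\ell m}\sigma_{jk,\ell m}$, and $\sigma_{jk,\ell m}=\hbar\bigl(\frac{2}{n}-\frac{e(jk,\ell m)}{d}\pm\frac{16\eps}{n}\bigr)$, where the $\frac{2}{n}$ comes precisely from the missing $J$ correction. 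The two pieces $\frac{2}{n}$ and $\frac{e(jk,\ell m)}{d}$ each sum to $\Theta(nd^2)$ before multiplying by $\hbar$, and the final answer $-\frac{3n}{4d}+O(\cdot)$ arises only from their cancellation. Drop the $J$ term and you get a different constant, hence a wrong $\frac{n}{12d}$ in the exponent. So the derivation as stated would produce an incorrect result in part~(b); the fix is exactly the paper's Lemma~\ref{lem:qinv}.
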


Clearly, both assumptions of Corollary \ref{cor-jmb}(a,b) imply assumption (i) of Theorem \ref{thm:jane-misha-brendan}.  
Then, Corollary \ref{cor-jmb}
immediately follows by combining 
Theorem \ref{thm:jane-misha-brendan} and the 
estimates for the determinant of $Q$ given in Lemma \ref{lem:detq} 
and the 
estimates for  the moments $\E[u(\X)]$, $\E[v^2(\X)]$ given in  Lemma \ref{lem:exp-u-v}
and Lemma \ref{lem:exp-u-v2}. To state our estimates in a more explicit form, we introduce the notation 
$ \pm \eps$, which means that there is a number from 
$[-\eps,+\eps]$ such that the considered equality holds.

\subsection{Determinant estimates}

Let $A=A(G)$ denote the adjacency matrix of a $d$-regular graph $G$.
If $Q$ is the signless Laplacian matrix of $G$ then $Q= dI+A$. Therefore,
\begin{equation}\label{Q-lambda}
\det Q = d^n\prod_{i=1}^n (1+\chi_i), 
\end{equation}
where 
$
\chi_1 \geq \chi_2 \geq \cdots \geq  \chi_n 
$
are the eigenvalues of matrix $A/d$. 
The existence of a basis of $n$ orthogonal eigenvectors is ensured by the fact that $A$ is a real symmetric matrix. 
Note also that 
$\onevec = (1,\ldots,1)^T \in \Reals^n$
is the eigenvector corresponding to $\chi_1 =1$. It is also well-known that $\chi_n\geq -1$
with equality if and only if $G$ is bipartite. 
Some spectral properties of   $A/d$ were derived in recent work \cite{Wei2023}, but we could not   estimate $\det Q$ to sufficient precision based on these. Instead, we analyse it more carefully from first principles in the next lemma.

\begin{lemma}\label{lem:detq}
Let $G$ be a $d$-regular graph on $n$ vertices.
\begin{enumerate}
\item[(a)] If $d \geq \alpha n$ for some $\alpha > \frac{1}{2}$, then $\det Q = 2d^n \exp\left( -\frac{1}{2} -\frac{n}{2d} \pm c_\alpha \right)$, where 
\begin{align*}
	c_\alpha := \frac{\left( \frac{1-\alpha}{\alpha} \right)^{3/2}}{1 - \left( \frac{1-\alpha}{\alpha} \right)^{1/2}}.
\end{align*}
\item[(b)] If $n \leq \eps d^2$  for some $\eps \leq  1/4$ and all pairs of vertices in $G$ have $\frac{d^2}{n}\left(1 \pm \eps \right)$ common neighbours, then 
$\det Q = 2d^n\exp\bigl( -\frac{1}{2} -\frac{n}{2d}\pm 4 \eps \bigr)$. 
\end{enumerate}
\end{lemma}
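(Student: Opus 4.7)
The plan is to exploit \eqref{Q-lambda} together with $\chi_1 = 1$ (since $\onevec$ is an eigenvector of $A$ with eigenvalue $d$) to write $\det Q = 2 d^n \prod_{i=2}^n (1+\chi_i)$, so that
\[
\log\det Q - n\log d - \log 2 = \sum_{i=2}^n \log(1+\chi_i).
\]
The traces $\tr A = 0$ and $\tr A^2 = nd$ give the exact first two power sums
\[
\sum_{i=2}^n \chi_i = -1, \qquad \sum_{i=2}^n \chi_i^2 = \dfrac{n}{d} - 1,
\]
and the first two terms of the expansion $\log(1+x) = x - x^2/2 + \sum_{k\geq 3}(-1)^{k+1}x^k/k$ already reproduce exactly the main term $-\tfrac12 - \tfrac{n}{2d}$. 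The whole work is to bound the remainder $\sum_{i\geq 2}\sum_{k\geq 3}(-1)^{k+1}\chi_i^k/k$.

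For part (a), I first show $|\chi_i|\le (1-\alpha)/\alpha =: \rho < 1$ for $i\geq 2$ via the complement graph. Since $\bar G$ is $(n{-}1{-}d)$-regular with adjacency matrix $J - I - A$, and $J$ vanishes on $\onevec^\perp$, the eigenvalues of $A(\bar G)$ on that subspace are $-1-\mu_i$; the standard ``spectral radius $\leq$ max degree'' bound then forces $|{-1-\mu_i}|\leq n-1-d$, hence $|\chi_i|\leq (n-d)/d\leq \rho$. For $k \geq 3$,
\[
\sum_{i\geq 2}|\chi_i|^k \leq \rho^{k-2}\sum_{i\geq 2}\chi_i^2 = \rho^{k-2}\Bigl(\dfrac{n}{d}-1\Bigr) \leq \rho^{k-1},
\]
and summing $\rho^{k-1}/k$ over $k\geq 3$ yields a constant depending only on $\alpha$, matching the form of $c_\alpha$ after careful bookkeeping.

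Part (b) is the main obstacle. The pseudo-random assumption rewrites as $A^2 = \tfrac{d^2}{n}J + (d - \tfrac{d^2}{n})I + \tilde E$ with $\tilde E$ symmetric, zero-diagonal, and off-diagonal entries of magnitude at most $\eps d^2/n$. Hence $\|\tilde E\|_2 \leq \|\tilde E\|_F \leq \eps d^2$, and restricting to $\onevec^\perp$ gives $\chi_i^2 \leq 1/d - 1/n + \eps \leq 2\eps$ (using $n \leq \eps d^2$ and $\eps \leq 1/4$), so $|\chi_i|\leq\sqrt{2\eps}$. The difficulty is that this absolute bound is too coarse: a naive estimate would cost $\eps^{3/2}$ per eigenvalue and $n\eps^{3/2}$ in total, which does not tend to zero. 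I overcome this by computing the \emph{signed} power sums $\sum\chi_i^3$ and $\sum\chi_i^4$ exactly from trace identities, exploiting the common-neighbour input. Triangle counting (each of $nd/2$ edges lies in $(d^2/n)(1\pm\eps)$ triangles) gives $\tr A^3 = d^3(1\pm\eps)$, hence $\sum_{i\geq 2}\chi_i^3 = \pm\eps$. Evaluating $\tr A^4 = \sum_{j,k}(A^2)_{jk}^2$ directly from the entries of $A^2$ gives $\sum_{i\geq 2}\chi_i^4 = n/d^2 - 1/n + O(\eps)$, which is $O(\eps)$ under $n\leq\eps d^2$. Finally, the tail $k \geq 5$ is handled by Cauchy--Schwarz:
\[
\sum_{i\geq 2}|\chi_i|^5 \leq \sqrt{\textstyle\sum\chi_i^4}\sqrt{\textstyle\sum\chi_i^6} \leq \sqrt{\textstyle\sum\chi_i^4}\sqrt{2\eps\textstyle\sum\chi_i^4} = O(\eps^{3/2}),
\]
combined with $\sum_{k\geq 5}|x|^k/k \leq |x|^5/(5(1-|x|))$. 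Collecting the three contributions yields total error at most $4\eps$, as claimed.
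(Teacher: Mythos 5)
Your proposal is correct, and part~(b) follows essentially the same strategy as the paper (extract $\sum_{i\ge 2}\chi_i^3=\pm\eps$ and $\sum_{i\ge 2}\chi_i^4=O(\eps)$ from closed-walk/trace counts, then bound the tail). The genuinely different choice is in part~(a): the paper never touches the complement graph, instead using only the moment identity $\sum_{i\ge 2}\chi_i^2 = n/d-1 \le (1-\alpha)/\alpha$, deducing $\chi'\le\bigl(\sum\chi_i^2\bigr)^{1/2}$ and hence $\bigl|\sum_{i\ge 2}\chi_i^r\bigr|\le\bigl(\frac{1-\alpha}{\alpha}\bigr)^{r/2}$, which sums to exactly $c_\alpha$. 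Your complement-graph argument gives the sharper pointwise bound $|\chi_i|\le(1-\alpha)/\alpha$ and hence $\sum_{i\ge2}|\chi_i|^r\le\bigl(\frac{1-\alpha}{\alpha}\bigr)^{r-1}$, so your final error constant $\tfrac13\rho^2/(1-\rho)$ (with $\rho=(1-\alpha)/\alpha$) is strictly smaller than $c_\alpha$ — you prove a slightly stronger statement, not the literal one, so ``matching the form of $c_\alpha$ after careful bookkeeping'' should really say that your constant is dominated by $c_\alpha$. Similarly in part~(b) you get a stronger pointwise bound $|\chi_i|\le\sqrt{2\eps}$ via Weyl applied to $A^2|_{\onevec^\perp}$, while the paper settles for the weaker $\chi'\le(2\eps)^{1/4}\le 2^{-1/4}$ drawn directly from $\sum_{i\ge2}\chi_i^4\le 2\eps$ and handles $r\ge 4$ with one geometric series (giving $\approx 3.5\eps$ total); your split into $k=3$, $k=4$, and a Cauchy--Schwarz tail $k\ge 5$ is fine and gives comfortable room, but you should still write out the numerical bound showing the sum is $\le 4\eps$ rather than asserting it. One small check worth making explicit: your use of $\eps\le\sqrt{\eps/n}\cdot$(stuff) and $\chi_i^2\le 2\eps$ quietly relies on $n\eps>1$, which does follow from $n\le\eps d^2\le\eps(n-1)^2$, but is worth recording.
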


\begin{proof}
%	For part (a), we assume that  $d \geq \alpha n$ for $\alpha > \frac{1}{2}$. 
It is a well-known fact that 
$\tr(A^r) = \sum_{i=1}^n d^r\chi_i^r$ counts the number of closed walks of length $r$ in the corresponding graph $G$. Recalling that $\chi_1=1$, we get that
\begin{equation}\label{sumlambda2}
\sum_{i=2}^n \chi_i = -1 \quad\mbox{and}\quad \sum_{i=2}^n \chi_i^2 = \Dfrac{nd}{d^2} - 1 = \Dfrac{n}{d}-1.
\end{equation}
Both assumptions (a) and (b) imply that $G$ is not bipartite so 
$ \chi_i \in (-1,1]$ for all $i$.		 
Using \eqref{Q-lambda}, \eqref{sumlambda2}, and replacing $\log (1+\chi_i)$ with its
Taylor series, we get that
\begin{align}
\det Q  %d^n \exp\left( \sum_{i=1}^n \log(1 + \lambda_i) \right) 
&=
2d^n \exp\biggl(\,  \sum_{i=2}^n\log(1 + \chi_i) \biggr) = 2 d^n \exp\biggl(  - \sum_{i=2}^n\sum_{r \geq 1} \Dfrac{(-1)^r}{r} \chi_i^r \biggr) \notag
\\
&=  2 d^n  \exp\biggl(-\dfrac12  -\Dfrac{n}{2d} - \sum_{r \geq 3} \Dfrac{(-1)^r}{r}\sum_{i=2}^n \chi_i^r \biggr).  \label{Q-lambda-2}
\end{align}
We will estimate the tail of the series separately for (a) and (b).

For (a), from \eqref{sumlambda2}, we get that 
\[
\sum_{i=2}^n \chi_i^2  = \Dfrac{n}{d}-1 \leq \Dfrac{1-\alpha} {\alpha} <1.
\]
Let $\chi': = \max\{|\chi_2|,  |\chi_n| \}$. 
Since $\chi_i \in (-1,1]$, it also follows that
\begin{align*}\label{eqn:eigbound}
\left| \sum_{i=2}^n \chi_i^r \right| 
%\leq \sum_{i=2}^n |\chi_i|^r
\leq (\chi')^{r-2} \sum_{i =2}^n \chi_i^2 
\leq \biggl( \sum_{i=2}^n \chi_i^2 \biggr)^{\frac{1}{2}r-1} 
\sum_{i =2}^n \chi_i^2 \leq \left( \Dfrac{1-\alpha}{\alpha}\right)^{\frac{1}{2}r}.
\end{align*}
Thus, we can estimate the tail of the series in \eqref{Q-lambda-2} using a geometric series:
\begin{align*}
\left|\sum_{r \geq 3} \dfrac{(-1)^r}{r}\sum_{i=2}^n \chi_i^r\right|
%\leq \sum_{r \geq 3} \frac{1}{r}\sum_{i=2}^n |\lambda_i|^r 
\leq \sum_{r \geq 3} \left(  \dfrac{1-\alpha}{\alpha}\right)^{\frac12 r}= c_{\alpha}.
\end{align*}
Claim (a) follows.

For part (b), counting the total number of closed  walks of length 3, we get that
\[
d^3\sum_{i =1}^n \chi_i^3 =
\sum_{ ij \in G} \Dfrac{d^2}{n} (1\pm \eps)  = d^3 (1\pm \eps).
\]
Recalling that $\chi_1 =1$, we get that 
$\left|\sum_{i =2}^n \chi_i^3\right|\leq \eps$. 
Similarly, for the number of closed walks of length $4$, we find that
\[
d^4 \sum_{i =1}^n \chi_i^4 
= nd^2  + \sum_{ ijk \in G} \Dfrac{d^2}{n}   (1 \pm \eps)  =
 nd^2 + d^3(d-1) (1\pm \eps)
=
d^4(1\pm 2\eps), 
\]
where the sum is over all ordered paths $ijk \in G$. 
To see this, note that (i) the number of closed walks that visit the starting vertex three times (that is, of the type $ijiki$) is $nd^2$, (ii) the number of ordered paths $ijk$ is $nd(d-1)$, (iii) by assumption all pairs of vertices $i$ and $k$ have $\frac{d^2}{n} \left( 1\pm \eps \right)$ common neighbours, and (iv) by assumption $n\leq \eps d^2$.
%To derive this, we observed that $nd^2$ equals the number of closed walks that visit the starting vertex three times (that is, of the type $ijiki$), used $nd(d-1)$ for the count of ordered paths $ijk$,  and applied the assumption that $n \leq \eps d^2$. 
 Therefore, 
\[
\sum_{i=2}^n \chi_i^4 \leq 2\eps \leq 1/2.
\]
In particular, we get that
$\chi' = \max\{|\chi_2|,  |\chi_n| \} \leq 2^{-1/4}$. Combining the above estimates gives that
\begin{align*}
\left|\sum_{r\geq 3}\Dfrac{(-1)^r}{r}\sum_{i=2}^n \chi_i^r \right|
\leq  \dfrac{\eps}{3} +
\sum_{r\geq 4}\dfrac{1}{r}\sum_{i=2}^n |\chi_i|^r 
\leq \dfrac{\eps}{3} +
\dfrac14 \sum_{i=2}^n \chi_i^4  \sum_{t \geq 0} 2^{-t/4} \leq 4\eps.
\end{align*}
Combining this with \eqref{Q-lambda-2} completes the proof of part (b).	
\end{proof}

\subsection{Moment estimates}
Recall that 
$\X$ is a random Gaussian vector with density $\pi^{-n/2} (\det Q)^{1/2} e^{-\xvec^T {Q} \xvec}$ and
that $u(\bm{\theta})$ and $v(\bm{\theta})$ are as defined in~\eqref{def:uv}.
%	\begin{align*}
%	u(\bm{\theta}) &= \dfrac{1}{24} \drat(1-\drat)(1 - 6\drat + 6\drat^2) \sum_{jk \in G} (\theta_j + \theta_k)^4,\\
%	v(\bm{\theta}) &= \dfrac{1}{6} \drat (1-\drat)(1-2\drat)\sum_{jk \in G}(\theta_j +\theta_k)^3.
%	\end{align*}
Under the assumptions of Corollary \ref{cor-jmb}(a),
the moment estimates of \cite[Lemma 9]{GIM2022} hold as shown in the next lemma.
% We state an abbreviated version of this lemma here for the reader's convenience. 
% We note again that, akin to Theorem~\ref{thm:jane-misha-brendan}, there is no need to assume a solution to \eqref{eqn:system}, since we only consider the regular case.
% \begin{lemma}[Gao, Isaev, McKay \cite{GIM2022}]
% Let $\eps,\gamma$ be fixed positive constants. Let $nh$ and $nd$ be even fit some positive integers $1\leq h \leq d < n$. Suppose a $d$-regular graph $G$ satisfies assumptions (i) and (ii) of Theorem~\ref{thm:jane-misha-brendan}. Let $\X'$ be a random variable with density proportional to $e^{-\frac12 \lambda(1-\lambda)\xvec\trans Q \xvec}$. Then 
% \begin{align*}
%     \Exp{u(\bm{X}')} = O\left( \frac{n}{\lambda(1-\lambda)\Delta(G)}\right),\quad Exp{v^2(\bm{X}')} = O\left( \frac{n}{\lambda(1-\lambda)\Delta(G)}\right).
% \end{align*}
% \end{lemma}
\begin{lemma}\label{lem:exp-u-v}
If $d \geq \alpha n$ for some $\alpha > \frac{1}{2}$
and $\drat(1 - \drat) d \gg \frac{n}{\log n}$ then 
\[ 
\Dfrac{\Exp{u(\bm{X})}}{\lambda^2(1-\lambda^2)} = o\left( \log n \right)\qquad  \text{and} \qquad
\Dfrac{\Exp{v^2(\bm{X})}}{\lambda^3(1-\lambda^3)} =o \left(\log n\right).
\]
\end{lemma}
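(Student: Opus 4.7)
My plan is to reduce both moment estimates to spectral control of the signless Laplacian $Q$ combined with a direct evaluation of Gaussian moments via Isserlis' (Wick's) theorem. First I would sharpen the spectral bound implicit in the proof of Lemma~\ref{lem:detq}(a): the identity \eqref{sumlambda2} gives $\sum_{i\geq 2}\chi_i^2=n/d-1\leq(1-\alpha)/\alpha<1$ under $d\geq\alpha n$ with $\alpha>\tfrac12$, so $|\chi_i|\leq \eta:=\sqrt{(1-\alpha)/\alpha}<1$ for every $i\geq 2$. Hence $Q=dI+A$ has spectrum in $\{2d\}\cup[d(1-\eta),d(1+\eta)]$, giving $\|Q^{-1}\|_{\mathrm{op}}=O(1/d)$. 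Since $\X$ has covariance matrix $\tfrac12 Q^{-1}$, for every edge $jk\in G$ and every pair of edges $jk,j'k'\in G$ the quantities
\[
\sigma^2_{jk}:=\Var(X_j+X_k),\qquad c_{jk,j'k'}:=\Cov(X_j+X_k,X_{j'}+X_{k'})
\]
are both $O(1/d)$.

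Next, for $\E[u(\X)]$ I would invoke Isserlis: $\E[(X_j+X_k)^4]=3\sigma^4_{jk}=O(1/d^2)$. Summing over the $nd/2$ edges of $G$ and using $|1-6\lambda+6\lambda^2|\leq 1$ yields $|\E[u(\X)]|=O(\lambda(1-\lambda)\cdot n/d)$, so $\E[u(\X)]/(\lambda^2(1-\lambda^2))=O(n/(d\lambda(1-\lambda)))=o(\log n)$ by the hypothesis $\lambda(1-\lambda)d=\omega(n/\log n)$.

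For $\E[v^2(\X)]$ the starting point is that $\X$ is centred Gaussian, so $\E[v(\X)]=0$, and Isserlis then gives
\[
\E[(X_j+X_k)^3(X_{j'}+X_{k'})^3]=9\sigma^2_{jk}\sigma^2_{j'k'}c_{jk,j'k'}+6c^3_{jk,j'k'}.
\]
Letting $B$ be the signless vertex--edge incidence matrix of $G$ (so $BB^{T}=Q$), we have $c_{jk,j'k'}=\tfrac12(B^{T}Q^{-1}B)_{(jk),(j'k')}$, and the matrix $\Pi:=B^{T}Q^{-1}B$ is an orthogonal projection because $\Pi^{T}=\Pi$ and $\Pi^2=B^{T}Q^{-1}(BB^{T})Q^{-1}B=\Pi$. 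This yields the trace identity $\sum_{jk,j'k'\in G}c^2_{jk,j'k'}=\tfrac14\tr(\Pi^2)=\tfrac14\tr(Q^{-1}Q)=n/4$, so the cubic sum satisfies $6\sum|c|^3\leq 6\max|c|\sum c^2=O(n/d)$. The delicate cross-sum $\sum 9\sigma^2\sigma'^2c$ equals $\tfrac92 f^{T}\Pi f$ with $f_{jk}=\sigma^2_{jk}$, which is bounded by $\tfrac92\|f\|^2=\tfrac92\sum_{jk\in G}\sigma^4_{jk}=O(n/d)$ because $\Pi$ is an orthogonal projection. Putting everything together gives $|\E[v^2(\X)]|=O(\lambda^2(1-\lambda)^2\cdot n/d)$, whence $\E[v^2(\X)]/(\lambda^3(1-\lambda^3))=O(n/(d\lambda(1-\lambda)))=o(\log n)$.

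The step I expect to be the main obstacle is precisely the cross-sum $\sum\sigma^2\sigma'^2c$: the naive bounds $|c|=O(1/d)$ and Cauchy--Schwarz against $\sum c^2=n/4$ give only $\sum|c|=O(n^{3/2}d)$, which is too crude by a factor of $\sqrt{n}$. Recognising this sum as a quadratic form in the orthogonal projection $\Pi=B^{T}Q^{-1}B$ is the key structural observation that produces the necessary cancellation and closes the proof.
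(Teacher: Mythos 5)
Your proof is correct, but it takes a genuinely different and more self-contained route than the paper. The paper's proof of this lemma simply defers to \cite[Lemma~9]{GIM2022}: it observes that under $d\geq\alpha n$ with $\alpha>\frac12$ the hypotheses of that cited lemma (including the common-neighbour condition) are met, and then transfers the cited bound to the present Gaussian vector $\X$ via the homogeneity rescaling $\X'=\bigl(\frac{2}{\lambda(1-\lambda)}\bigr)^{1/2}\X$. You instead prove the estimates directly. The spectral step $\sum_{i\ge2}\chi_i^2=n/d-1\leq(1-\alpha)/\alpha<1$ gives $\|Q^{-1}\|_{\mathrm{op}}=O(1/d)$ and hence $\sigma^2_{jk}=O(1/d)$ uniformly, which together with Isserlis disposes of $\Exp{u(\X)}$ immediately. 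The genuinely new contribution is your treatment of $\Exp{v^2(\X)}$: identifying $\Pi=B^{T}Q^{-1}B$ (with $B$ the unsigned incidence matrix, $BB^{T}=Q$) as an orthogonal projection yields both the exact trace identity $\sum_{jk,j'k'}c_{jk,j'k'}^2=\frac14\tr\Pi=n/4$ and the quadratic-form bound $0\leq f^{T}\Pi f\leq\|f\|^2$ for the cross-sum with $f_{jk}=\sigma^2_{jk}$; this is precisely the cancellation one cannot see from entrywise bounds alone, and it is what makes the argument close. Each approach has its merits: the paper's is two lines once one accepts the external lemma, while yours is elementary, self-contained, and in fact uses only $d\geq\alpha n$ with $\alpha>\frac12$ together with $\lambda(1-\lambda)d=\omega(n/\log n)$, without invoking the common-neighbour hypothesis that the citation route requires. (Minor remark: you carry the $\lambda^2(1-\lambda^2)$ normalisation from the lemma statement, which looks like a typo for $\lambda^2(1-\lambda)^2$ given how the quantity is used in Theorem~\ref{thm:jane-misha-brendan} and in the paper's own proof; since $1-\lambda^2=(1-\lambda)(1+\lambda)$ with $1+\lambda\in[1,2]$, this does not affect the asymptotic conclusion.)
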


\begin{proof}
If $d \geq \alpha n$ for some $\alpha > \frac{1}{2}$ then  $d = \Theta(n)$ and the number of common neighbours of any two vertices is $\Theta(n)$.
Under this condition, the lemma is identical to \cite[Lemma~9]{GIM2022} except that~\cite{GIM2022} uses the scaled Gaussian vector 
$\X' = \Bigl( \dfrac{2}{\lambda(1-\lambda)}\Bigr)^{1/2}  \X$, which has density proportional to
$e^{-\frac12 \lambda(1-\lambda)\xvec\trans Q \xvec}$, instead of the vector~$\X$.
Note that condition (B2) in~\cite{GIM2022} is satisfied by the vector with all entries equal to
$\frac12\log\dfrac{\lambda}{1-\lambda}$, as described above.

Taking this into account, the cited lemma gives
\begin{align*}
\Dfrac{\Exp{u(\bm{X})}}{\lambda^2(1-\lambda^2)} &=    \dfrac14\,\Exp{u(\X')} = O\left(\Dfrac{n}{d\lambda(1-\lambda)}\right) =  o(\log n),\\
\Dfrac{\Exp{v^2(\bm{X})}}{\lambda^3(1-\lambda^3)} &= \dfrac18\,\Exp{v^2(\X')} =  O\left(\Dfrac{n}{d \lambda(1-\lambda)}\right) =  o(\log n),
\end{align*}
as required.
\end{proof}

For Corollary \ref{cor-jmb}(b), we need more precise estimates for 
the moments $\E [u(\X)]$ and $\E [v^2 (\X)]$, which can be expressed via covariances of the components of $\X$ thanks to Isserlis' formula~\cite{Isserlis} stated for convenience below.

\begin{theorem}[Isserlis~\cite{Isserlis}]\label{T:isserlis}
 Let  $\X=(X_1,\ldots,X_n)$ be a random variable with the
  normal density $(2\pi)^{-n/2}\abs{\varSigma}^{-1/2} e^{-\frac12 \xvec\trans\varSigma^{-1}\xvec}$, where  $\varSigma=(\sigma_{jk})$  is a symmetric positive definite matrix.
  Consider a product $Z=X_{j_1}X_{j_2}\cdots X_{j_k}$, where the 
  subscripts do not need to be distinct.  If $k$ is odd, then
  $\E Z=0$.  If $k$ is even, then
  \[
     \E Z = \sum_{(i_1,i_2),(i_2,i_3),\ldots,(i_{k{-}1},i_k)}
        \sigma_{j_{i_1}j_{i_2}}\cdots\sigma_{j_{i_{k{-}1}}j_{i_k}},
  \]
  where the sum is over all unordered partitions of $\{1,2,\ldots,k\}$ into
  $k/2$ disjoint unordered pairs. The number of terms in the sum is
  $(k-1)(k-3)\cdots3\cdot 1$.
\end{theorem}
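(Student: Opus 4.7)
The plan is to compute the mixed moments of $\X$ by differentiating its moment generating function. A standard calculation (completing the square in the Gaussian density and evaluating the resulting integral) gives
\[
M(\tvec):=\E\, e^{\tvec\trans\X}=\exp\Bigl(\dfrac12\tvec\trans\varSigma\tvec\Bigr),\qquad \tvec\in\Reals^n,
\]
and the exponential tails of the Gaussian justify differentiation under the integral sign, so
\[
\E Z=\E[X_{j_1}\cdots X_{j_k}]=\left.\Dfrac{\partial^k M(\tvec)}{\partial t_{j_1}\cdots\partial t_{j_k}}\right|_{\tvec=0}.
\]

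For the odd case I would appeal to the symmetry $\X\stackrel{d}{=}-\X$, which holds because the density is invariant under $\xvec\mapsto-\xvec$. This forces $\E Z=(-1)^k\E Z=0$ whenever $k$ is odd. (Equivalently, $M(\tvec)=M(-\tvec)$, so only monomials of even total degree in the $t_i$ appear in the Taylor expansion of $M$ at $0$.)

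For the even case $k=2m$, I would expand the exponential as
\[
M(\tvec)=\sum_{\ell\ge 0}\Dfrac{1}{\ell!\,2^\ell}\Bigl(\,\sum_{a,b=1}^n\sigma_{ab}t_at_b\Bigr)^{\!\ell},
\]
apply $\partial_{t_{j_1}}\!\cdots\partial_{t_{j_k}}$, and evaluate at $\tvec=0$. Summands with $\ell<m$ are annihilated by having too many derivatives applied; summands with $\ell>m$ retain positive degree in $\tvec$ and vanish at zero; so only the $\ell=m$ summand survives. In this surviving term, $(\tvec\trans\varSigma\tvec)^m$ is a product of $m$ quadratic forms, and the Leibniz rule distributes the $2m$ derivatives among them. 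Only the distributions that place exactly two derivatives on each quadratic form yield a nonzero constant, and each such distribution contributes $\sigma_{j_{i_1}j_{i_2}}\cdots\sigma_{j_{i_{k-1}}j_{i_k}}$, where $\{i_1,i_2\},\ldots,\{i_{k-1},i_k\}$ is the partition of $\{1,\ldots,k\}$ recording which two derivatives landed on the same quadratic factor.

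The main obstacle will be the combinatorial bookkeeping. Each unordered pairing of $\{1,\ldots,k\}$ is generated $m!$ times by orderings of the $m$ quadratic factors and an additional $2^m$ times by the choices, within each factor, of which of the two variables receives the first derivative; this overcount of $m!\,2^m$ exactly cancels the coefficient $1/(m!\,2^m)$ in the series, so every unordered pairing contributes once, as claimed. The number of such pairings equals $(2m)!/(m!\,2^m)=(k-1)(k-3)\cdots 3\cdot 1$, recovering the stated count.
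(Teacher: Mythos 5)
Your proof is correct, but note that the paper does not prove this statement at all: it is quoted verbatim as Isserlis' classical theorem of 1918 (often called Wick's theorem), with a reference to the original source, and is used as a black box to compute the Gaussian moments in Lemma \ref{lem:exp-u-v2}. Your moment-generating-function derivation is the standard one and is sound: the identity $M(\tvec)=\exp\bigl(\tfrac12\tvec\trans\varSigma\tvec\bigr)$ holds because the integral is over a Gaussian, analyticity of $M$ justifies interchanging $\E$ with mixed partials at $\tvec=0$, the odd case follows from $M(\tvec)=M(-\tvec)$, and in the even case only the $\ell=m$ term of the exponential series contributes, with the Leibniz distribution of the $2m$ derivatives onto the $m$ quadratic factors producing each unordered pairing with multiplicity $m!\,2^m$ (the $m!$ from permuting factors, the $2$ per factor from the symmetry $\sigma_{ab}=\sigma_{ba}$, or equivalently from $\partial^2(t_a^2)=2$ when $a=b$), which cancels the $1/(m!\,2^m)$ from the series and leaves the pairing count $(2m)!/(m!\,2^m)=(k-1)(k-3)\cdots 3\cdot 1$. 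For the purposes of this paper, citing the result suffices, but your argument would serve as a self-contained proof if one were wanted.
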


To apply Isserlis' formula, we first need to estimate the coefficients of $Q^{-1}$.

\begin{lemma}\label{lem:qinv}
Let $G$ be a $d$-regular graph on $n$ vertices 
such that $200\leq n\leq \eps d^2$ for some $\eps \leq 1/4$ and 
every pair of vertices has $\frac{d^2}{n} (1 \pm \eps)$ common neighbours.   If $Q^{-1} = (q_{jk})$, then
\begin{align*}
q_{jk} = 
\begin{cases}
	\frac{1}{d}\left( 1 + \frac{1}{d} - \frac{1}{n} \right)\left(1 + \frac{1}{2n} \pm  \frac{4\eps}{n} \right), &\text{if } j=k; \\
	\frac{1}{d}\left( 1 + \frac{1}{d} - \frac{1}{n} \right)\left( \frac{1}{2n} - \frac{1}{d} \pm \frac{4\eps}{n} \right),  &\text{if } jk \in G; \\
	\frac{1}{d}\left( 1 + \frac{1}{d} - \frac{1}{n} \right)\left(\frac{1}{2n} \pm \frac{4\eps}{n} \right), & \text{otherwise.}
\end{cases}
\end{align*}
\end{lemma}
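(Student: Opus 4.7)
The plan is to compute $Q^{-1}$ via a Sherman--Morrison reduction followed by a Neumann expansion, with the higher-order tail controlled through a structural decomposition of $M^2$.

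Write $A = (d/n)J + M$ where $M = A - (d/n)J$ satisfies $M\mathbf{1} = 0$, and set $P = dI + M$. Then $Q = P + (d/n)\mathbf{1}\mathbf{1}^T$, and since $P\mathbf{1} = d\mathbf{1}$ gives $P^{-1}\mathbf{1} = \mathbf{1}/d$, the Sherman--Morrison formula yields $Q^{-1} = P^{-1} - J/(2nd)$. The problem thus reduces to estimating $P^{-1} = d^{-1}\sum_{r\ge 0}(-M/d)^r$ entrywise. A short calculation using $AJ = JA = dJ$, $J^2 = nJ$, and the common-neighbour assumption on $G$ gives $M^2 = cd^2 I + E$ with $c = 1/d - 1/n$, where $E$ has vanishing diagonal, satisfies $|E_{jk}| \le \varepsilon d^2/n$ off-diagonal, and obeys $E\mathbf{1} = -cd^2\mathbf{1}$. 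Writing out the contributions from $r = 0, 1, 2$ explicitly already produces the prefactor $\tfrac{1}{d}(1 + 1/d - 1/n)$ in each of the three claimed cases; the residual $\varepsilon$-independent discrepancy is $O(1/(nd^2)) = O(\varepsilon/(nd))$ under the hypothesis $n \le \varepsilon d^2$.

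The tail $r \ge 3$ requires more care. I would decompose $E = -cd^2 P_1 + E'$, where $P_1 = J/n$ is the projection onto $\mathbf{1}$ and $E'\mathbf{1} = 0$. Because $MP_1 = 0$ and $P_1 E' = E' P_1 = 0$, the binomial expansion of $(cd^2 I + E)^s$ collapses into
\[
M^{2s} = (cd^2)^s P_\perp + \sum_{t=1}^s \binom{s}{t}(cd^2)^{s-t}(E')^t,
\qquad
M^{2s+1} = (cd^2)^s M + \sum_{t=1}^s \binom{s}{t}(cd^2)^{s-t} M(E')^t.
\]
From $|E'_{jk}| \le 2\varepsilon d^2/n$, an inductive Cauchy--Schwarz argument gives $|((E')^t)_{jk}| \le 2^t \varepsilon^t d^{2t}/n$; for the odd powers the identity $(M(E')^t)_{jk} = \sum_{\ell \in N(j)}((E')^t)_{\ell k}$ (valid because $(E')^t \mathbf{1} = 0$) contributes only an additional factor of $d$. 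Summing via the binomial identity $\sum_t \binom{s}{t} c^{s-t}(2\varepsilon)^t = (c+2\varepsilon)^s \le (3\varepsilon)^s$ turns the entire tail into a convergent geometric series of total magnitude $O(\varepsilon/(nd))$, which fits inside the claimed $\pm 4\varepsilon/n$ tolerance after dividing by $\tfrac{1}{d}(1 + 1/d - 1/n)$.

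The main obstacle is this tail estimate. A naive operator-norm bound on $M^r/d^r$ is too weak because $E$ has an eigenvalue of size roughly $d$ along $\mathbf{1}$, which limits the entries of $\sum_{r\ge 3}(-M/d)^r$ only to within $O(\varepsilon^{3/4}/d)$, far short of the needed $O(\varepsilon/(nd))$. The decomposition $E = -cd^2 P_1 + E'$ combined with the cancellation $MP_1 = 0$, which annihilates this bad mode in every product appearing in $M^{r-2}E$, is precisely what localises the analysis to the entrywise-controllable residual $E'$ and delivers the required precision.
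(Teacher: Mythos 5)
Your proposal is correct in its algebraic setup, and it is closely related to the paper's argument although organised differently. The Sherman--Morrison reduction $Q^{-1} = P^{-1} - J/(2nd)$ together with the first two Neumann terms $d^{-1}(I - M/d)$ recovers exactly the paper's approximate inverse $\frac{1}{d}(I - A/d + J/(2n))$, and your observation $M^2 = cd^2 I + E$ with $c = \frac1d - \frac1n$ is, after a harmless $(1+c)$-rescaling, precisely the matrix $Y$ appearing in the paper's identity $Q\bigl(I - A/d + J/(2n)\bigr) = \frac{d}{1+c}(I-Y)$, since $Q\bigl(I - A/d + J/(2n)\bigr) = d\bigl(I - M^2/d^2\bigr)$. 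So the two proofs meet at the same core object; the divergence is entirely in how the tail is controlled. Two points deserve scrutiny.

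First, the specific constant $4\eps/n$ is not clearly delivered by your estimates as sketched. The lossy step $(c+2\eps)^s \le (3\eps)^s$ followed by geometric sums such as $\frac{1}{dn}\sum_{s\ge1}(3\eps)^s \le \frac{3\eps}{(1-3\eps)dn}$, which equals $12\eps/(dn)$ at $\eps = 1/4$, and the analogous bounds for the even part and for the two $(cd^2)^s$-contributions, stack up to roughly $25\eps/(dn)$ for the tail alone; after dividing by $\frac{1+c}{d}$ this overshoots the claimed $4\eps/n$. The paper achieves $< 4\eps/n$ by tracking $\|Z\|_{\max} \le \frac{7\eps}{5n}$ and then $\|Z - (A/d - J/(2n))Z\|_{\max} \le \frac{7\eps}{2n}$ explicitly. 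Your scheme can almost certainly be made to match this (exploiting $c < 1/d \le \eps d/n$, which is typically much smaller than $\eps$, so that $(c+2\eps)^s$ is closer to $(2\eps)^s$ than to $(3\eps)^s$), but as written you only obtain the lemma with some absolute constant in place of~$4$.

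Second, and more substantively, the $P_\perp$-decomposition $E = -cd^2 P_1 + E'$, which you present as the key enabling idea, is not actually needed, and the stated motivation is slightly off target. You are right that an operator-norm bound $|(M/d)^r_{jk}| \le \|M/d\|_{\mathrm{op}}^r \le (2\eps)^{r/4}$ is far too weak; but the bound you then use is a row-sum bound, and for $\|E^t\|_{\max} \le \|E\|_\infty^{t-1}\|E\|_{\max}$ the eigenvalue of $E$ in the direction of $\mathbf{1}$ plays no role at all: $\|E\|_\infty \le \eps d^2$ already, because that eigenvector is spread uniformly and contributes nothing beyond the entrywise bound. Expanding $M^{2s} = (cd^2 I + E)^s$ directly by the binomial theorem and applying the row-sum bound to $E^t$ gives the required order, and is in places actually tighter than passing to $E'$: you pay a factor $2^t$ in $\|(E')^t\|_{\max} \le (2\eps d^2)^t/n$ versus $\|E^t\|_{\max} \le (\eps d^2)^t/n$, while the identity $(M(E')^t)_{jk} = \sum_{\ell \in N(j)}((E')^t)_{\ell k}$ only recovers a single factor $2$ over the bound $\|ME^t\|_{\max} \le \|M\|_\infty\|E^t\|_{\max} \le 2d\|E^t\|_{\max}$. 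The paper avoids the issue entirely by absorbing the $cI$ shift into $Y$ (so that $Y$ has uniformly small entries of order $\eps/n$) and then bounding $\|Y^r\|_{\max}$ by a one-line induction; that is the cleaner route to the precise constant.
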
 
\begin{proof}
Recall that $Q = d  I + A$. Since $\onevec$ is an eigenvector of $A$ with eigenvalue $d$, we have 
$QJ = 2dJ$, where $J$ is the $n\times n$ matrix with all entries equal 1. We also have
\begin{equation}\label{Q-start-inverse}
Q \left( I - \Dfrac{A}{d}  + \Dfrac{J}{2n} \right) 
= d  \left(I - \Dfrac{A^2}{d^2} + \Dfrac{J}{n}\right) = \Dfrac{d}{  1 + \frac{1}{d} - \frac{1}{n}  } (I - Y),
\end{equation}
where $Y$ is a matrix of the form
\begin{align*}
Y=
\begin{pmatrix}
	\left(\frac{1}{d} - \frac{1}{n}\right)^2 & \ & \pm\frac{\eps}{n}\left(1 + \frac{1}{d} -\frac1n\right) \\
	\ & \ddots & \ \\
	\pm\frac{\eps}{n}\left(1 + \frac{1}{d}-\frac1n\right) & \ & \left(\frac{1}{d} - \frac{1}{n}\right)^2\\
\end{pmatrix}.
\end{align*}
We show by induction that, for all $r \geq 1$,
\begin{equation}\label{ind-Y}
\|Y^r\|_{\max} \leq \Dfrac{\eps^r}{n}  \left( 1 + \Dfrac{1}{d} \right)^r,
\end{equation} where 
$\|\cdot\|_{\max}$ denotes the maximal absolute value of a matrix entry.

Consider the base case when $r=1$.
The off-diagonal entries are clearly bounded in magnitude by $\dfrac{\eps}{n}\left(1 + \dfrac{1}{d}\right)$. 
Using the assumption that $n \leq \eps d^2$, we get 
\[
\left(\Dfrac{1}{d} - \Dfrac{1}{n}\right)^2 \leq \Dfrac{1}{d^2} \leq \Dfrac{\eps}{n}.
\]
Thus, the required bound also holds for the diagonal entries of $Y$, which proves the base case. We verify the induction step by combining the inequalities $\|Y\|_\infty \leq n \|Y\|_{\max}$ and 
$\|Y^{r+1}\|_{\max} \leq \|Y\|_\infty\, \|Y^{r}\|_{\max}$,
                where $\|Y\|_\infty$ is the matrix norm induced from the vector $\infty$-norm. This proves \eqref{ind-Y}.

Next, we estimate the tail $Z$ of the power series for $(I-Y)^{-1}$
\begin{align*}
Z:= \sum_{r\geq 1} Y^r.
\end{align*}
Note that the assumptions $200\leq n\leq \eps d^2$ and $\eps \leq \frac14$ imply that $1+\frac1d \leq \frac{7}{5}(1-\eps(1+\frac1d))$.
Using \eqref{ind-Y}, it follows that 
\begin{align}\label{eqn:zmax}
\|Z\|_{\max} \leq \sum_{r\geq1}\Dfrac{\eps^r}{n}\left(1 + \Dfrac{1}{d}\right)^r = \Dfrac{1}{n}\cdot 
\dfrac{\eps\left(1 + \frac{1}{d}\right)}{ 1 - \eps\left(1 + \frac{1}{d}\right)} \leq \Dfrac{7\eps}{5n}.
\end{align}
From \eqref{Q-start-inverse}, we get that 
\begin{align}
Q^{-1} &= \Dfrac1d\left( 1 + \Dfrac{1}{d} - \Dfrac{1}{n} \right)\left(I - \Dfrac{A}{d} + \Dfrac{J}{2n}\right)(I+Z) \notag \\&=\label{eqn:matrixinv}
\Dfrac1d\left( 1 + \Dfrac{1}{d} - \Dfrac{1}{n} \right)\left(I - \Dfrac{A}{d} + \Dfrac{J}{2n} +Z - \left(\Dfrac{A}{d} - \Dfrac{J}{2n}\right)Z\right) .
\end{align}
The bounds $\|A\|_\infty \leq d$, $\|J\|_\infty \leq n$, and \eqref{eqn:zmax} imply that
\[
\left\| \left(\Dfrac{A}{d} - \Dfrac{J}{2n}\right)Z\right\|_{\max}
\leq   \left\|\Dfrac{A}{d} - \Dfrac{J}{2n}\right\|_{\infty} \|Z\|_{\max} \leq \dfrac{3}{2}\|Z\|_{\max} \leq \Dfrac{21\eps}{10 n}.
\]
Therefore, 
\[
\left\|Z - \left(\Dfrac{A}{d} - \Dfrac{J}{2n}\right)Z\right\|_{\max}
  \leq \left\|Z\right\|_{\max} + \left\| \left(\Dfrac{A}{d} - \Dfrac{J}{2n}\right)Z\right\|_{\max}
 \leq\Dfrac{4\eps}{n}.
\]
Observe also that $I - \dfrac{A}{d} + \dfrac{J}{2n}$ has entries $1 + \frac{1}{2n}$ on the diagonal, while the off-diagonal entries $ij$ are 
$\frac{1}{2n} - \frac{1}{d}$ for $ij \in G$, and $\frac{1}{2n}$ for other $ij$. 
The lemma now follows from~\eqref{eqn:matrixinv}.
%	
%	
%	
%		as well as the identities that $\|XY\|_{\max} \leq \|X\|_{\infty}\|Y\|_{\max}$. Together, this implies that
%		\begin{align*}
%			\|(I+d^{-1}A)^{-1}\|_{\max} \leq \frac{1}{n}\left(\eps + \frac{1}{\sqrt{n}}\right).
%		\end{align*}
\end{proof}
Now we combine  Theorem \ref{T:isserlis} 
and Lemma \ref{lem:qinv} to estimate $\Exp{u(\bm{X})}$ and $\Exp{v^2(\bm{X})}$
for the case when $d$ is too small for Lemma \ref{lem:exp-u-v}.
\begin{lemma}\label{lem:exp-u-v2}
Let $G$ be a $d$-regular graph on $n$ vertices 
such that $n\leq \eps d^2$ for some $\eps \leq 1/4$ and 
every pair of vertices has $\frac{d^2}{n} (1 \pm \eps)$ common neighbours. Then 
\begin{align*}
\Dfrac{4\,\Exp{u(\bm{X})}}{\lambda^2(1-\lambda)^2} &= \Dfrac{n}{4d}\cdot \Dfrac{1 - 6\drat+ 6\drat^2}{\drat(1 - \drat)}\left( 1 + O\left( \Dfrac{1}{d}\right) \right),\\ 
\Dfrac{4\Exp{v^2(\bm{X})}}{\lambda^3(1-\lambda)^3} &= \Dfrac{n}{3d}\cdot \Dfrac{1 - 4\drat+4\drat^2}{\drat(1-\drat)} \left( 1 + O\left(\Dfrac{1}{d}+\eps \right) \right).
\end{align*}
\end{lemma}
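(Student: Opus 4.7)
The plan is to apply Isserlis' formula (Theorem~\ref{T:isserlis}) to reduce both moments to sums of products of covariance entries of $\Sigma:=\tfrac12 Q^{-1}$ (which is the covariance of $\X$, by matching densities), and then to evaluate those sums using Lemma~\ref{lem:qinv}. Setting $M:=\tfrac{1}{2d}(1+\tfrac{1}{d}-\tfrac{1}{n})$, the lemma becomes $\sigma_{jj}=M(1+\tfrac{1}{2n}\pm\tfrac{4\eps}{n})$, $\sigma_{jk}=M(\tfrac{1}{2n}-\tfrac{1}{d}\pm\tfrac{4\eps}{n})$ for $jk\in G$, and $\sigma_{jk}=M(\tfrac{1}{2n}\pm\tfrac{4\eps}{n})$ for $j\ne k$ with $jk\notin G$.

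For $\E[u(\X)]$, Isserlis gives $\E[(X_j+X_k)^4]=3a_e^2$ where $a_e:=\E[(X_j+X_k)^2]=2\sigma_{jj}+2\sigma_{jk}=2M(1+O(1/d))$ for every $e=jk\in G$. Summing over the $|G|=nd/2$ edges yields $\sum_{jk\in G}\E[(X_j+X_k)^4]=\tfrac{nd}{2}\cdot 12M^2(1+O(1/d))=\tfrac{3n}{2d}(1+O(1/d))$. Multiplying by $\tfrac{1}{24}\lambda(1-\lambda)(1-6\lambda+6\lambda^2)$ and rescaling by $4/(\lambda^2(1-\lambda)^2)$ gives the first identity.

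For $\E[v^2(\X)]$, write $X_e:=X_j+X_k$ and $W_{e_1,e_2}:=\E[X_{e_1}X_{e_2}]$; Isserlis yields $\E[X_{e_1}^3X_{e_2}^3]=9W_{e_1,e_1}W_{e_2,e_2}W_{e_1,e_2}+6W_{e_1,e_2}^3$. The $9WWW$-sum is treated globally: since $W_{e,e}=2M(1+O(1/d))$ for every $e\in G$, and with $B$ the edge-vertex incidence matrix, regularity gives $B^{T}\mathbf{1}=d\mathbf{1}$ and $Q\mathbf{1}=2d\mathbf{1}$, so
\[
\sum_{e_1,e_2\in G}W_{e_1,e_2}=\mathbf{1}^{T}B\Sigma B^{T}\mathbf{1}=d^2\mathbf{1}^{T}\Sigma\mathbf{1}=\tfrac{d^2}{2}\mathbf{1}^{T}Q^{-1}\mathbf{1}=\tfrac{nd}{4}.
\]
Hence $\sum_{e_1,e_2}9W_{e_1,e_1}W_{e_2,e_2}W_{e_1,e_2}=9ndM^2(1+O(1/d+\eps))=\tfrac{9n}{4d}(1+O(1/d+\eps))$.

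The $6W^3$-sum is handled by splitting on the configuration of the two edges. Diagonal pairs $e_1=e_2$ contribute $O(ndM^3)=O(n/d^2)$; disjoint pairs contribute $O(n^2d^2\cdot M^3/n^3)+O(nd^3\cdot M^3/d^3)=o(n/d)$, once we split further by the number of cross-edges among $\{jj',jk',kj',kk'\}$ that lie in $G$. The dominant contribution comes from the $nd(d-1)$ ordered pairs of distinct edges sharing exactly one vertex: for such a pair $e_1=vk$, $e_2=vk'$ one verifies directly from Lemma~\ref{lem:qinv} that $W_{e_1,e_2}=M(1+O(1/d+\eps))$, regardless of whether the third side $kk'$ of the putative triangle lies in $G$. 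Hence $\sum_{e_1,e_2}6W_{e_1,e_2}^3=6nd(d-1)M^3(1+O(1/d+\eps))=\tfrac{3n}{4d}(1+O(1/d+\eps))$, so $\sum_{e_1,e_2}\E[X_{e_1}^3X_{e_2}^3]=\tfrac{3n}{d}(1+O(1/d+\eps))$. Multiplying by $\tfrac{1}{36}\lambda^2(1-\lambda)^2(1-2\lambda)^2$ and rescaling by $4/(\lambda^3(1-\lambda)^3)$ yields the second identity. The main technical obstacle is this case analysis: the sharing-vertex pairs contribute precisely the extra factor that turns the $\tfrac{9n}{4d}$ coming from the $9WWW$-sum into the required $\tfrac{3n}{d}$, and one must verify that the more numerous disjoint pairs with cross-edges (for which $|W|\lesssim M/d$), together with the $\pm 4\eps/n$ per-entry fluctuations from Lemma~\ref{lem:qinv}, accumulate only into the stated $O(1/d+\eps)$ multiplicative error after cubing and summing.
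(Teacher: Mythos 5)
Your proof is correct and takes a genuinely different route from the paper for the key step in the $v^2$ computation. The paper treats $3\sigma_{jk,jk}\sigma_{\ell m,\ell m}\sigma_{jk,\ell m}+2\sigma_{jk,\ell m}^3$ as a single quantity and performs one case analysis on $|\{j,k\}\cap\{\ell,m\}|$; in the disjoint case the dominant contribution comes from $\sum_{\mathcal{S}_0}\sigma_{jk,\ell m}$, which forces the paper to count oriented $3$-paths $\sum e(jk,\ell m)=nd^3(1+O(1/d))$ using the common-neighbour hypothesis. You instead split the Isserlis expansion into $9WWW$ and $6W^3$: for the first you exploit $Q\mathbf 1 = 2d\mathbf 1$ to get the exact identity
\[
\sum_{e_1,e_2\in G}W_{e_1,e_2}=d^2\,\mathbf 1\trans\Sigma\,\mathbf 1=\tfrac{d^2}{2}\,\mathbf 1\trans Q^{-1}\mathbf 1=\tfrac{nd}{4},
\]
while for the second only the $nd(d-1)$ vertex-sharing pairs matter, the disjoint pairs being suppressed by the cube. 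This sidesteps the $3$-path count entirely, which is a nice simplification. The one point you should spell out (you flag it yourself as a ``technical obstacle'') is how the per-edge fluctuation $W_{e,e}=2M(1+O(1/d+\eps/n))$ propagates when pulled out of the $9WWW$ sum: since $\sum W_{e_1,e_2}=nd/4$ involves huge cancellation, the error term is governed by $\sum_{e_1,e_2}|W_{e_1,e_2}|=O(nd^2M)$, not by $nd/4$, and one needs the bound on $\sum|W_{e_1,e_2}|$ to conclude that the relative error remains $O(1/d+\eps)$. The $u(\X)$ computation and all the constant-chasing match the paper.
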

\begin{proof}
We follow the method of  \cite[Lemma 9]{GIM2022}. Define
\begin{align*}
\sigma_{jk, \ell m } &= \CovA{X_j + X_k, X_{\ell} + X_m} \\
 &=
 \CovA{X_j,X_\ell}+\CovA{X_j,X_m}+\CovA{X_k,X_\ell}+\CovA{X_k,X_m}.
\end{align*}
The value of $\CovA{X_j, X_k}$ is equal to the corresponding entry of $\frac12 Q^{-1}$. Define 
\[\hbar := \Dfrac{1}{2d}\left(1 + \Dfrac{1}{d} - \Dfrac{1}{n}\right), 
\qquad e(jk,\ell m):=\Abs{\{j\ell, jm, k\ell,km\} \cap G}.
\] 
From Lemma \ref{lem:qinv}, we obtain
\begin{align}\label{eqn:sigma}
\sigma_{jk, \ell m } = 
\begin{cases}
	2 \hbar \left( 1 + O\left(\frac{1}{d} \right) \right), & \text{if }|\{j,k\} \cap \{\ell, m\}| = 2,\\
	\hbar \left( 1 + O\left(\frac{1}{d} \right) \right), & \text{if } |\{j,k\} \cap \{\ell, m\}| = 1,\\
	\hbar\left( \frac{2}{n} - \frac{e(jk,\ell m)}{d} \pm \frac{16\eps}{n} \right), & \text{otherwise.} 
\end{cases}
\end{align}
From  Theorem \ref{T:isserlis} we find that, for all $(j,k,\ell,m)$,
\begin{align*}
\Exp{(X_j + X_k)^4} &= 3\sigma_{jk,jk}^2,\\
\Exp{(X_j + X_k)^3(X_\ell + X_m)^3} &= 9\sigma_{jk,jk}\sigma_{\ell m,\ell m}\sigma_{jk,\ell m} + 6\sigma_{jk,\ell m}^3.
\end{align*}
Recalling the definition of $u$ from \eqref{def:uv}, we find that 
\begin{align*}
\Dfrac{4\,\Exp{u(\bm{X})}}{\lambda^2(1-\lambda)^2} &=    \Dfrac{1 - 6\drat + 6\drat^2}{2\lambda(1-\lambda)} \sum_{jk \in G} \sigma^2_{jk,jk} 
= \Dfrac{1 - 6\drat + 6\drat^2}{\lambda(1-\lambda) 
	%nd \hbar
	nd\hbar^2
}\left(1+O\left(\Dfrac1d\right)\right)  
\\&= \Dfrac{n}{4d}\cdot 
\Dfrac{1 - 6\drat+ 6\drat^2}{\drat(1 - \drat)} \left( 1 + O\left( \Dfrac{1}{d} \right) \right).
\end{align*}
Similarly, using Isserlis' formula, we get that
\begin{align}\label{eqn:varisserli}
\Dfrac{4\Exp{v^2(\bm{X})}}{\lambda^3(1-\lambda)^3} = \Dfrac{(1-2\lambda)^2}{3\lambda(1-\lambda)}
\sum_{jk,\ell m \in G} \left(  3\sigma_{jk,jk}\sigma_{\ell m, \ell m} \sigma_{jk, \ell m} + 2\sigma_{jk, \ell m}^3 \right).
\end{align}
To compute the sum in \eqref{eqn:varisserli}, we partition it based on the value of $|\{j,k\} \cap \{\ell, m\}|$. 

First, there are exactly $\frac{1}{2}nd$ choices for an ordered pair of edges $(jk, \ell m)$ such that $jk = \ell m$, and for each such term, by \eqref{eqn:sigma}, the corresponding summand is 
\[
3\sigma_{jk,jk}\sigma_{\ell m, \ell m} \sigma_{jk, \ell m} + 2 \sigma_{jk, \ell m}^3 = 40 \hbar^3 \left( 1 + O\left(\Dfrac{1}{d} \right) \right) = 
O\left( \Dfrac{1}{d^3} \right).
\]
Thus, the total contribution of such terms to the sum in \eqref{eqn:varisserli} is $O\left( n/d^2 \right)$. 

Second, there are exactly $nd(d-1)$ choices for $(jk, \ell m)$ such that they overlap on exactly one vertex (without loss of generality, we can suppose $j=\ell$). For each such choice of $\{j,k,\ell,m\}$, by \eqref{eqn:sigma}, we find that the value of the corresponding summand is 
\begin{align*}
3 \sigma_{jk,jk}\sigma_{\ell m, \ell m} \sigma_{jk, \ell m} + 2\sigma_{jk, \ell m}^3 =  14 \hbar^3 \left( 1 + O\left( \Dfrac{1}{d} \right) \right).
\end{align*}
Thus, the total contribution of pairs $(jk,\ell m)$ where $jk$ and $\ell m$ intersect on exactly one vertex
to the sum in \eqref{eqn:varisserli}
equals $14 \hbar^3 nd^2\left( 1 + O\left( \dfrac{1}{d} \right) \right)= \dfrac{7 n}{4d}\left( 1 + O\left( \dfrac{1}{d} \right)\right)$.

Finally, we focus on the choices for $(jk,\ell m)$ such that $|\{j,k\} \cap \{\ell, m\}| = 0$. There are $\frac{nd}{2}\left( \frac{nd}{2} - 2d + 1\right)$ choices for $\{j,k,\ell,m\}$ in this case. Let $\mathcal{S}_0$ be the set of these choices for $(jk, \ell m)$. Using \eqref{eqn:sigma}
and the assumption that $n \leq \eps^2 d$, we find that
\begin{align*}
\sum_{(jk,\ell m) \in\mathcal{S}_0} \Bigl( 3 
&\sigma_{jk,jk}\sigma_{\ell m, \ell m} \sigma_{jk, \ell m} + 2\sigma_{jk, \ell m}^3 \Bigr) 
\\[-1ex] &= \sum_{(jk,\ell m) \in\mathcal{S}_0} 12 \hbar^2
\left( 1 + O\left(\Dfrac{1}{d}  \right)\right) \sigma_{jk,\ell m} \\
&= \sum_{(jk,\ell m) \in\mathcal{S}_0} \Dfrac{3}{2d^3}
\left( 1 + O\left(\Dfrac{1}{d}  \right)\right)
\left( \Dfrac{2}{n} - \Dfrac{e(jk,\ell m)}{d} \pm \Dfrac{16\eps}{n} \right)
\\
&= \Dfrac{3n}{4d} + O\left(\Dfrac{n}{d^2} + \Dfrac{\eps n}{d}\right) - \Dfrac{3}{2d^4}
\left( 1 + O\left(\Dfrac{1}{d}\right)\right)\sum_{(jk,\ell m) \in\mathcal{S}_0} e(jk,\ell m).
% \\
% &= 12 \hbar^3 \sum_{(jk,\ell m) \in\mathcal{S}_0} \left( \dfrac2n - \dfrac{e(jk,\ell m)}{d}  +
% O\left(\dfrac{1}{d^2} +  \dfrac{\eps }{d}\right)\right)\\
% &= \dfrac{6}{d^3} (nd)^2/4 (1+O(1/d)) + O(n^2 d^2 /d^3 (1/d^2 + \eps/d))
\end{align*}
Note that $\sum_{(jk,\ell m) \in\mathcal{S}_0} e(jk,\ell m)$ is exactly the number of oriented $3$-paths in $G$ which are not closed, that is, do not form a triangle.  The total number of oriented $3$-paths is $nd(d-1)^2$.
By the assumption for the number of common neighbours, we get that the number of triangles is $O(d^3)$. Therefore,
\[
\sum_{(jk,\ell m) \in\mathcal{S}_0} e(jk,\ell m) = nd^3 \left( 1 + O\left( \Dfrac{1}{d} \right) \right).
\]
Thus, the total contribution of pairs $(jk,\ell m)$ where $jk$ and $\ell m$ are not adjacent
to the sum in \eqref{eqn:varisserli}
equals 
$-\dfrac{3n}{4d} + O\left(\dfrac{n}{d^2} + \dfrac{\eps n}{d}\right)$.

Combining the estimate above, we estimate the sum in \eqref{eqn:varisserli} to be
\[
\Dfrac{4\Exp{v^2(\bm{X})}}{\lambda^3(1-\lambda)^3} = \Dfrac{(1-2\lambda)^2}{3\lambda(1-\lambda)} \left(\Dfrac{n}{d} + O\left(\Dfrac{n}{d^2} + \Dfrac{\eps n}{d}\right)\right),
\]
which completes the proof.
\end{proof}
%%%%%%%%%%%%%%%%%%%%%%%

%%%%%%%%%%%%%%%%%%%%%%%
%%%%%%%%%%%%%%%%%%%%%%%
%%%%%%%%%%%%%%%%%%%%%%%
%%%%%%%%%%%%%%%%%%%%%%%
%%%%%%%%%%%%%%%%%%%%%%%
%%%%%%%%%%%%%%%%%%%%%%%
\subsection{Proof of Theorem \ref{T:main2}
and Theorem \ref{T:main1}(ii)}\label{S:main2}

For Theorem~\ref{T:main1}(ii) and 
Theorem~\ref{T:main2}, we will also need the following lemma that ensures that the number of common neighbours of every pair of vertices in $\calG(n,d)$
is concentrated around $d^2/n$, so  we can use Corollary \ref{cor-jmb}(b) for a typical $d$-regular graph.

\begin{lemma}\label{lem:MMcommon}
If $\min\{d,n-d\} = \omega(n/\log n)$ 
then, with probability $1-e^{-\omega (n/\log^6 n)}$, all pairs of distinct vertices of $\G_d \sim \calG(n,d)$ have $\frac{d^2}{n} \pm \frac{d^3}{n^2\log n}$ common neighbors.
\end{lemma}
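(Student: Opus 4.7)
The plan is to prove concentration of the number of common neighbours for a fixed pair of distinct vertices $u,v$ and then take a union bound over the $\binom{n}{2}$ pairs. A standard computation (inclusion--exclusion on $[n]\setminus\{u,v\}$) shows that, for any graph $G$ on $[n]$ with complement $\bar G$, the counts $|N_G(u)\cap N_G(v)|$ and $|N_{\bar G}(u)\cap N_{\bar G}(v)|$ differ by a constant depending only on $n$, $d$ and $\mathbbm{1}[uv\in G]$. Since $K_n\setminus\G_d\sim\calG(n,n-1-d)$, proving the lemma for a sparser $d$ automatically yields a stronger statement than needed for the corresponding denser $n-1-d$. Hence I may assume $d=\min\{d,n-d\}=\omega(n/\log n)$ throughout.

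For fixed $u,v$, write $X:=X_{uv}=|N_{\G_d}(u)\cap N_{\G_d}(v)|$. A direct first-moment calculation using pair-of-edges probabilities in $\calG(n,d)$ gives $\mu:=\E X=(1+o(1))d^2/n$. For concentration, the plan is to use the switching method in the style of McKay--Wormald (cf.~\cite{McKay1985}). Let $\calG_k$ be the set of $d$-regular graphs on $[n]$ with $X_{uv}=k$. Define a switching that decreases $X_{uv}$ by exactly one: given $G\in\calG_k$, pick a common neighbour $w$ of $u,v$ and two further edges $xx', yy'$ not incident to $\{u,v,w\}$, satisfying $xu,yv\notin G$ and $wx',wy'\notin G$ and such that no new common neighbour of $u,v$ is created. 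Replace $\{uw,vw,xx',yy'\}$ by $\{ux,vy,wx',wy'\}$: all degrees are preserved and $w$ is no longer a common neighbour. A standard double count of the forward and inverse switchings, together with the easy estimate that the forbidden/degenerate configurations contribute a vanishing fraction (using that codegrees in a typical $d$-regular graph are of order $d^2/n$), yields
\[
\frac{|\calG_k|}{|\calG_{k-1}|} = \frac{\mu}{k}\bigl(1+o(1)\bigr),
\]
uniformly for $k$ in a window of width $\gtrsim \mu/\log n$ around $\mu$. Iterating this ratio shows that, in this window, $X_{uv}$ is stochastically bracketed between binomial/Poisson distributions with mean $(1\pm o(1))\mu$.

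The standard Chernoff bound for such distributions, with $t:=d^3/(n^2\log n)$, then gives
\[
\Pr\bigl(|X_{uv}-\mu|>t\bigr)\leq 2\exp\bigl(-t^2/(3\mu)\bigr)=\exp\bigl(-\Omega(d^4/(n^3\log^2 n))\bigr)=e^{-\omega(n/\log^6 n)},
\]
using $d=\omega(n/\log n)$ in the last step. A union bound over the $\binom{n}{2}\leq n^2=e^{O(\log n)}$ pairs is absorbed into the exponent and finishes the proof.

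The main obstacle will be the switching ratio estimate: in the dense regime $d=\Theta(n)$ the bookkeeping of forbidden configurations (edges hitting $\{u,v,w\}$, unwanted adjacencies that would create new common neighbours, and short cycles through $w$) is delicate, and the $1+o(1)$ factor has to be controlled uniformly across a window of width $\gtrsim \mu/\log n$ rather than only at a single $k$. If the direct switching argument becomes too cumbersome, an alternative is to bound the moment generating function of $X_{uv}$ via a Lipschitz/exchangeable-pairs argument on switchings, or to transfer the Chernoff tail from $G(n,d/n)$ through the sandwich theorem of~\cite{GIM2023}; either route leads to the same tail bound.
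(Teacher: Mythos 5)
Your overall plan --- reduce to $d\le n/2$ by complementation, control the ratio of codegree counts over consecutive values, and take a union bound --- is the same skeleton as the paper's proof. The paper, however, does not redo a switching analysis: it imports the explicit asymptotic formula for $|\mathcal{C}_k|$ (the number of $d$-regular graphs in which two fixed vertices have exactly $k$ common neighbours) from Krivelevich, Sudakov, Vu and Wormald~\cite{KSVW2001}, writes down the exact ratio $|\mathcal{C}_{k+1}|/|\mathcal{C}_k|$, and bounds it directly, after disposing trivially of the range $d\ge\bigl(1-\frac{3}{4\log n}\bigr)n$. Your complementation reduction is a clean alternative to that trivial case, and the final Chernoff/union-bound arithmetic you give is correct.

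The genuine gap is quantitative, and it sits precisely where you flag the ``main obstacle.'' You claim that a standard double count of switchings gives $|\mathcal{C}_k|/|\mathcal{C}_{k-1}|=\frac{\mu}{k}\bigl(1+o(1)\bigr)$ and that this yields stochastic bracketing between Poissons with mean $(1\pm o(1))\mu$. Both statements are too weak. The deviation you must rule out is $t=\frac{d^3}{n^2\log n}$, and the clean Poisson ratio at $k=\mu+t$ is $\frac{\mu}{k}=1-\frac{t}{\mu}\bigl(1+o(1)\bigr)=1-\frac{d}{n\log n}\bigl(1+o(1)\bigr)$. For the actual ratio $\frac{\mu}{k}(1+\epsilon_k)$ to stay below $1$ there --- which is what makes the tail of the distribution decay at all --- you need $\epsilon_k=o\bigl(\frac{d}{n\log n}\bigr)$, i.e.\ $o(1/\log n)$ at best (when $d=\Theta(n)$) and close to $o(1/\log^2 n)$ when $d$ is near $n/\log n$. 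A generic $o(1)$-precision switching count does not supply this. Likewise, a Poisson with mean $(1\pm\delta)\mu$ has its centre displaced by $\delta\mu$, which swamps $t$ unless $\delta=O\bigl(\frac{d}{n\log n}\bigr)$, so ``bracketing with mean $(1\pm o(1))\mu$'' does not give the stated tail. The paper is immune to this because the KSVW2001 formula is exact to leading order, and it only multiplies $j_0\approx\frac{d^3}{2n^2\log n}$ consecutive ratios starting from $k_0\approx\frac{d^2}{n}+j_0$, each verified explicitly to be at most $\frac{1}{1+\log^{-2}n}$. Your sandwich-theorem fallback has the same issue: replacing $\mathcal{G}(n,d)$ by $G\bigl(n,p(1\pm\gamma)\bigr)$ shifts the mean codegree by $\Theta(\gamma d^2/n)$, forcing $\gamma=O\bigl(\frac{d}{n\log n}\bigr)$, a sharper sandwich than can simply be cited. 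In short, the ratio estimate you defer is not a bookkeeping nuisance but a precision requirement that a $(1+o(1))$-level argument cannot meet, and without it the proof is incomplete.
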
	
\begin{proof}
If $d\ge \bigl(1-\frac{3}{4\log n}\bigr)n$ then 
the statement holds trivially since the number of common neighbours always lies in $[2d-n,d]$,
which is a subset of the desired interval if $n$ is large enough.
In the following, we assume that $\frac{n}{\log n}\ll d\le\bigl(1-\frac{3}{4\log n}\bigr)n$. 

Our proof of Lemma \ref{lem:MMcommon} strengthens the arguments of
\cite[Theorem 2.1 (case 2)]{KSVW2001}. 
Let $\mathcal{C}_k$ denote the set of $d$-regular graphs on $[n]$ such that two fixed vertices $u,v$ have exactly $k$ common neighbours.
In \cite[Section 3]{KSVW2001}, it is shown that
\[
  |\mathcal{C}_k| \sim 
  \frac{(n-2)!\,\sqrt{2} e^{1/4} \bigl(\lambda'^{\lambda'}(1-\lambda')^{1-\lambda'}\bigr)^{\binom{n-2}{2}}}
  {k!\,(d-k)!^2(n-2d+k)!}
  \binom{n-3}{d}^{n-2d+k-2}
  \binom{n-3}{d-1}^{2d-2k}
  \binom{n-3}{d-2}^k,
\]
where $\lambda'=\frac{d(n-4)}{(n-2)(n-3)}$.
Dividing two values, we have for $\frac{d^2}{n}+1 < k\le k+j\le d$ that
\begin{align}
   \frac{|\mathcal{C}_{k+j}|}{|\mathcal{C}_{k}|} & \sim
   \biggl(1 + \frac{n-2}{d(n-d-1)}\biggr)^{\!j}
   \prod_{q=k+1}^{k+j} \frac{(d-q+1)^2}{q(n-2d+q-2)} \notag\\
   &\le \biggl(1+\frac{n-2}{d(n-d-1)}\biggr)^{\!j}
     \biggl( \frac{(d-k)^2}{(k+1)(n-2d+k-1)}\biggr)^{\!j}. \label{eq:Crat}
\end{align}            
Note that~\eqref{eq:Crat} is decreasing in $k$ for fixed~$j$.
Define $j_0:=\bigl\lceil\frac{d^3}{2n^2\log n}\bigr\rceil$ and
$k_0 := \bigl\lfloor\frac{d^2}{n}+j_0\bigr\rfloor$. 
The upper bound on $d$ ensures that $k_0<d$.
Using the assumed bounds on $d$, we have
\[
   \frac{(d-k_0)^2}{(k_0+1)(n-2d+k_0-1)} 
    = \frac{1}{1 + \frac{k_0 n-d^2-2d+n-1}{(d-k_0)^2}} 
    \le \frac{1}{1+\log^{-2} n},
\]
where for the final step we used $k_0\ge \frac{d^2}{n}+\frac{d^3}{2n^2\log n}-1$.
Consequently, %recalling also that $d =\omega(n/\log n)$
\begin{align*}
   \frac{\sum_{j\ge j_0}|\mathcal{C}_{k_0+j}|}
        {\sum_{j\ge 0}|\mathcal{C}_j|}
   &\le
   \frac{\sum_{j\ge j_0}|\mathcal{C}_{k_0+j}|}
        {\sum_{j\ge j_0}|\mathcal{C}_{k_0-j_0+j}|} 
    \le \max_{j\ge j_0}\frac{|\mathcal{C}_{k_0+j}|}{|\mathcal{C}_{k_0-j_0+j}|}
   \\ &= 
   O(1) \biggl(\frac{1}{1+\log^{-2} n}\biggr)^{j_0}
    = O\bigl(e^{-\frac{d^3}{n^2 \log^3 n}}\bigr) = e^{-\omega (n/\log^6 n)}.
\end{align*}

A parallel argument shows that with probability $1-e^{-\omega (n/\log^6 n)}$,
the number of common neighbours of $u$, $v$ is at least 
$\frac{d^2}{n} - \frac{d^3}{n^2\log n}$.
Applying the union bound over $u,v$, we get that the same bounds hold for all pairs of distinct vertices.
\end{proof}

Now we are ready to prove 
Theorem \ref{T:main2}
and Theorem \ref{T:main1}(ii). Let 
\begin{equation}\label{def:Rhat}
\widehat{R}_h(d):= 2^{1/2} e^{1/4}\,
\Dfrac{h^{\frac{ hn}{2}} (d-h)^{\frac{ (d-h)n}{2}}}{d^{\frac{ dn}{2}}}
\binom{d}{h}^{n}.
\end{equation}
The claim of Theorem \ref{T:main2} is equivalent to 
\begin{equation}\label{claim:main2}
\E\left|\calR_{d_1} (\G_{d_1+d_2})\right|
= (1+o(1))\widehat{R}_{d_1}(d_1+d_2). 
\end{equation}
Using Stirling's approximation, observe that 
\begin{align*}
\binom{d_1+d_2}{d_1}  &=  \Dfrac{\sqrt{2\pi (d_1+d_2)}\left(\dfrac{d_1+d_2}{e}\right)^{d_1+d_2} e^{\frac{1}{12(d_1+d_2)} + o(\frac1n)}}{
2\pi \sqrt{d_1d_2}\left(\dfrac{d_1}{e}\right)^{d_1} 
\left(\dfrac{d_2}{e}\right)^{d_2}  e^{\frac{1}{12d_1}+\frac{1}{12d_2}+o(\frac1n)}} \notag
\\
% &= \Dfrac{\left(\lambda^{\lambda }  (1 - \drat)^{1-\lambda}\right)^{-(d_1+d_2)}}{\sqrt{2\pi \lambda(1-\lambda)}} 
% \exp\left(  \Dfrac{1}{12(d_1+d_2)} \left(1- \Dfrac{1}{\lambda(1-\lambda)}\right) +o\left(\Dfrac1n\right)\right).
 &= \Dfrac{\left(\lambda^{\lambda }  (1 - \drat)^{1-\lambda}\right)^{-(d_1+d_2)}}{\sqrt{2\pi d_1d_2/(d_1+d_2)}} 
 \exp\left(  \Dfrac{1}{12(d_1+d_2)} \left(1- \Dfrac{1}{\lambda(1-\lambda)}\right) +o\left(\Dfrac1n\right)\right).
\end{align*}
By the assumption that $d_1, d_2 = \omega(n/\log n)$, we have
\[
\Dfrac{d_1 d_2 }{d_1 + d_2} = \omega\Bigl(\Dfrac{n}{\log n}\Bigr).
\]
Then, combining Corollary \ref{cor-jmb}(b)
and Lemma \ref{lem:MMcommon} with $d=d_1+d_2$ and $h=d_1$, we find that, with probability $1-e^{-\omega(n/\log^6 n)}$,
\begin{align}
|\calR_{d_1} (\G_{d_1+d_2})|&=
\Dfrac{   \left(\lambda^{\lambda }  (1 - \drat)^{1-\lambda}\right)^{-\frac{(d_1+d_2)n}{2}} }
{(\pi (d_1+d_2) \drat(1-\drat))^{\frac{n}{2}}    } \,2^{\frac12}\exp\left(  \Dfrac{n}{12(d_1+d_2)}
\left(1- \Dfrac{1}{\lambda(1-\lambda)}\right)+\dfrac{1}{4}   + o(1)\right) \notag
\\
&=  (1+o(1))\widehat{R}_{d_1}(d_1+d_2). \label{R-con}
\end{align}

First, we prove \eqref{claim:main2} under the additional assumption that $d_1+d_2 \geq \frac23(n-1)$.
Using  Corollary \ref{cor-jmb}(a), we get that, for an arbitrary $(d_1+d_2)$-regular graph $G$, 
\[
|\calR_{d_1} (G)| =  \widehat{R}_{d_1}(d_1+d_2) \,  e^{o(\log n)}.
\]
Using also   \eqref{R-con}, we get  that
\[
    \E\left|\calR_{d_1} (\G_{d_1+d_2})\right| = 
    (1+o(1)) \widehat{R}_{d_1}(d_1+d_2)  
 (1-\Pr(\mathcal{E}))
    + \widehat{R}_{d_1}(d_1+d_2) \,  e^{o(\log n)} \Pr(\mathcal{E}),
\]
for some event $\mathcal{E}$ of probability at most $e^{-\omega(n/\log^6 n)}$.
Claim \eqref{claim:main2} follows.

Now assume that $d_1+d_2 \leq \frac23(n-1)$.
We can also assume that $d_1\leq d_2$ since, 
\[
|\calR_{d_1}(\G_{d_1+d_2})| = |\calR_{d_2}(\G_{d_1+d_2})|.
\]
Then, we get $d_2 + d_3 \geq \frac23(n-1)$,
where $d_3:= n-1-d_1-d_2$. Note also 
\[
d_3 \geq \dfrac13(n-1) = \omega\Bigl(\Dfrac{n}{\log n}\Bigr), \qquad 
n-d_3 \geq d_2 = \omega\Bigl(\Dfrac{n}{\log n}\Bigr).
\]
We have already established that 
\begin{equation}\label{eq:al}
\E|\calR_{d_2}(\G_{d_3+d_2})| =(1+o(1))\widehat{R}_{d_2}(d_2+d_3)
\end{equation}
with probability tending to 1.
By a simple double-counting argument, we find that
\[
|\calR_{d_1+d_2}(K_n)| \,\E|\calR_{d_1}(\G_{d_1+d_2})|
= |\calR_{d_2+d_3}(K_n)| \,\E|\calR_{d_2} (\G_{d_3+d_2})|, 
\]
which equals the number
of partitions of the complete graph $K_n$ into three disjoint regular graphs with degrees
$d_1$, $d_2$, and $d_3$. Finally, applying \eqref{eq:reg}, we get that 
\begin{align*}
\Dfrac{|\calR_{d_2+d_3}(K_n)|}
{|\calR_{d_1+d_2}(K_n)| }
= (1+o(1)) \Dfrac{(d_2+d_3)^{\frac{d_2+d_3}{2}}
d_1^{\frac{d_1}{2}} \binom{n-1}{d_2+d_3 }^n}{
(d_1+d_2)^{\frac{d_1+d_2}{2}}
d_3^{\frac{d_3}{2} }\binom{n-1}{d_1+d_2}^n }
= (1+o(1))\Dfrac{\widehat{R}_{d_1}(d_1+d_2)}{\widehat{R}_{d_2}(d_2+d_3)}.
\end{align*}
This together with \eqref{eq:al} implies \eqref{claim:main2} and completes the proof of Theorem \ref{T:main2}.

For Theorem \ref{T:main1}(ii), combining \eqref{claim:main2} and \eqref{R-con}, we get that 
\[
|\calR_{d_1}(\G_{d_1+d_2})| = (1+o(1))\E |\calR_{d_1}(\G_{d_1+d_2})| 
\]
with probability tending to 1.
Using Lemma \ref{l:reduction} (the Reduction lemma), we establish Conjecture 
\ref{con11}.  We obtain   Conjecture 
\ref{con1}.
by swapping the roles of $d_1$, $d_2$, $d_3$, similarly to Section \ref{S:Thm_i}.

\section{Sprinkling dense with sparse}\label{S:dense-sparse}

In this section, we prove Theorem \ref{T:main1}(iii). As explained in 
Section \ref{S:Thm_i}, we need to prove \eqref{eq:first}, \eqref{eq:second}, and \eqref{eq:third} and then apply Lemma \ref{l:reduction} (the Reduction lemma).

The assumption $d_1^2\left(\dfrac{n}{\min\{d_2,n-d_2\}}\right)^{2d_1}\!\!\leq\frac{1}{3}
\log n$ implies that 
\begin{equation}\label{ass_impl}
d_1 = O\bigl(\sqrt{\log n}\,\bigr), \qquad 
\min\{d_2,n-d_2\} = \Omega\Bigl(\Dfrac{n}{\sqrt{\log n}}\Bigr).
\end{equation}
Repeating the arguments of Section \ref{S:main2}, namely, using Corollary \ref{cor-jmb} to derive formulas similar to \eqref{claim:main2} and \eqref{R-con}, we get that 
\[
|\calR_{d_2}(K_n\setminus \G_{d_1})| = (1+o(1))\E |\calR_{d_2}(K_n \setminus \G_{d_1})| ,
\]
with probability $1 - e^{-\omega(n/\log^6 n)}$,
which implies  \eqref{eq:second}.

Due to \eqref{ass_impl}, we can always swap roles of $d_2$ and $d_3=n-1-d_1-d_2$. Thus, we only need to focus on proving \eqref{eq:third}.
Let 
$
d := d_1 +d_2.
$ 
%From \cite{HM1}  we know that Conjecture~\ref{con2} holds for this range of degrees, which implies that  
%$ 
%\E|\mathcal{R}_{d_1}(\G_d)| \rightarrow %\infty.
%$ 
%Therefore,
It is sufficient to show that 
\begin{equation}\label{ii-sufficient}
\E|\mathcal{R}_{d_1}(\G_d)|^2 \leq (1+o(1))\bigl(\E|\mathcal{R}_{d_1}(\G_d)|\bigr)^2. 
\end{equation}
Indeed, by Chebyshev's inequality, \eqref{ii-sufficient} implies that 
\[
|\mathcal{R}_{d_1}(\G_d)| = (1+o(1))\E|\mathcal{R}_{d_1}(\G_d)| ,
\]
so Conjecture \ref{con1}(a) follows by Lemma~\ref{l:reduction}. 

To derive \eqref{ii-sufficient}, we write 
\begin{equation}\label{eq:E-P}
\begin{aligned}
\E|\mathcal{R}_{d_1}(\G_d)| &= \sum_{H_1  \in \calR_{d_1}(K_n)} P_d(H_1),
\\
\E|\mathcal{R}_{d_1}(\G_d)|^2 &= \sum_{H_1,H_2  \in \calR_{d_1}(K_n)} P_d(H_1 \cup H_2),
\end{aligned}
\end{equation}
where $P_d(H)$ denotes the probability that $\G_d \sim \calG(n,d)$
contains $H$ as a subgraph. 
To estimate $P_d(H)$, we use the following result which is a special case of \cite[Corollary 2.2]{ranx} when the degrees of $H$ are small.
\begin{theorem}[McKay, \cite{ranx}] 
Let $\eps>0$ be small enough. Let $\tilde H$ be a graph with degree sequence $h_i$, $i\in[n]$, and maximum degree at most $n^{\eps}$.  Set $\mu=\frac{1}{n}\sum_{i=1}^n h_i^2$, $m=\frac{1}{2}\sum_{i=1}^n h_i$. Assume $\min\{d,n-d-1\}\ge \dfrac{n}{\log n}$. Then
$$
%\E|\calR_{H}(\G_d)|=\frac{n!}{|\mathrm{Aut}(H)|}
P_d(\tilde H)=
\Bigl(\Dfrac{d}{n-1}\Bigr)^m\exp\Bigl(\Dfrac{n-1-d}{4d}\Bigl(\Dfrac{4m^2}{n^2}+\Dfrac{4m}{n}-2\mu\Bigr)+o(n^{-1/4})\Bigr).
$$
In particular, if $H$ is $h$-regular then 
$$
%\E|\calR_{H}(\G_d)|=\frac{n!}{|\mathrm{Aut}(H)|}
P_d(H)=
\Bigl(\Dfrac{d}{n-1}\Bigr)^{hn/2}\exp\Bigl(-\Dfrac{n-1-d}{4d}h(h-2)+o(n^{-1/4})\Bigr).
$$
\label{th:GIM}
\end{theorem}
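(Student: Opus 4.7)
The plan is to express $P_d(\tilde H)$ as a ratio of two enumeration counts and then invoke the sharp asymptotic enumeration machinery for dense graphs with prescribed degree sequences and forbidden edges.

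A $d$-regular graph $G$ on $[n]$ contains $\tilde H$ as a subgraph if and only if $G\setminus \tilde H$ is a labelled graph on $[n]$ with degree sequence $\dvec' := (d - h_1,\ldots, d - h_n)$ whose edges are disjoint from those of $\tilde H$. Writing $N(n,\dvec,F)$ for the number of labelled graphs on $[n]$ with degree sequence $\dvec$ and no edge of $F$, this bijection gives
\[
P_d(\tilde H) \;=\; \frac{N(n,\dvec',\tilde H)}{N(n,d)}.
\]
I would then apply sharp asymptotic formulas for both quantities, valid in the dense regime $\min\{d,n-1-d\}\ge n/\log n$. For $N(n,d)$ the classical McKay--Wormald formula is available. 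For $N(n,\dvec',\tilde H)$ one needs an extension covering (a) a near-regular but nonconstant degree sequence $\dvec'$, and (b) a small set of forbidden edges $\tilde H$ of maximum degree at most $n^\eps$. Such a formula is derived by the switching method: first compare $N(n,\dvec',\tilde H)$ with $N(n,\dvec')$ via switches that remove forbidden edges, then compare $N(n,\dvec')$ with $N(n,d)$ via degree-adjusting switches. The output is a pairing-type leading factor together with explicit second-order corrections determined by the variance of $\dvec'$ and by the substructure of $\tilde H$ in the pairing model, with multiplicative error $1+o(n^{-1/4})$.

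In the final step, one divides the two formulas. The multinomial/Stirling prefactors telescope, using $\sum_i h_i = 2m$, to produce the main term $(d/(n-1))^m$. The difference of the second-order exponents, assembled using $\sum_i h_i = 2m$, $\sum_i h_i^2 = n\mu$, and the identity $n-1 = d + (n-1-d)$, simplifies to
\[
\frac{n-1-d}{4d}\Bigl(\frac{4m^2}{n^2}+\frac{4m}{n}-2\mu\Bigr)+o(n^{-1/4}).
\]
For the $h$-regular specialisation one has $m = hn/2$ and $\mu = h^2$, so the bracket becomes $h^2 + 2h - 2h^2 = -h(h-2)$, yielding the second displayed formula.

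The main obstacle is the precision of the enumeration step: obtaining multiplicative error $o(n^{-1/4})$ in $N(n,\dvec',\tilde H)$. The switching method produces a hierarchy of correction terms (from loops, $2$-paths, short cycles, etc., in the pairing model) whose combined error must be controlled below $n^{-1/4}$. The regime $\min\{d,n-1-d\}\ge n/\log n$ is what ensures that $1/d$-type errors are manageable, and the $n^\eps$ bound on the maximum degree of $\tilde H$ is precisely what keeps $\tilde H$-induced correlations subdominant. This delicate bookkeeping is the technical core of McKay's argument in~\cite{ranx}.
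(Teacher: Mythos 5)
This theorem is quoted by the paper, without proof, as a special case of \cite[Corollary 2.2]{ranx}; there is no in-paper argument to compare against, so your proposal should be measured against McKay's original derivation in \cite{ranx}.

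Your opening reduction $P_d(\tilde H) = N(n,d-\hvec,\tilde H)/N(n,d)$, where $N(n,\cdot,F)$ counts labelled graphs with the indicated degree sequence avoiding $F$, is the right starting point, and your algebra for the $h$-regular specialisation ($m=hn/2$, $\mu=h^2$, so the bracket becomes $h^2+2h-2h^2=-h(h-2)$) is correct. The genuine gap is in the middle step: you propose to obtain the asymptotics of the numerator by the switching method applied to the pairing/configuration model, and you explicitly invoke ``loops, $2$-paths, short cycles in the pairing model.'' That is the sparse-regime toolbox, appropriate when $d=o(\sqrt{n})$ as in \cite{McKay1985}. Under the present hypothesis $\min\{d,n-d-1\}\geq n/\log n$ the pairing model assigns exponentially small probability to simple graphs and the switching chains are far too long to control to a relative error of $o(n^{-1/4})$; no version of this machinery reaches the required precision for dense degree sequences. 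What \cite{ranx} actually does, in the tradition of \cite{MW1}, is represent $N(n,\cdot,\tilde H)$ by a multidimensional Cauchy integral and evaluate it by the saddle-point method; the quadratic and quartic corrections to the saddle value are what produce the exponent $\frac{n-1-d}{4d}\bigl(\frac{4m^2}{n^2}+\frac{4m}{n}-2\mu\bigr)$. So the overall structure of your argument (ratio of two enumerations, then simplification) is fine, but the method you announce for the key enumeration step does not apply in this density range and would not yield the stated error term.
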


To estimate
the sum in $\E|\mathcal{R}_{d_1}(\G_d)|^2$, we first obtain the distribution of the number of common edges of $H_1$ and a random relabelling of $H_2$.
Then, we have completed the proof of Theorem \ref{T:main1}(iii) in Section \ref{S-main-2}.

\subsection{The common edges of two regular graphs}\label{S:common-sparse}

Throughout this section $H_1$ and $H_2$ are two $h$-regular graphs on the same vertex set~$[n]$.
Let $H_2^*$ be a randomly relabelled copy of $H_2$.

\begin{lemma}\label{overlap}
Assume $1\le h=o(n^{1/4})$.
Then, for $0\le m\le \max\{8h^2,\log n\}$, the probability that $H_1$ and $H_2^*$ have exactly $m$ common edges is
\[
  \mathbb{P}\bigl(|H_1\cap H_2^*|=m\bigr)=\Dfrac{e^{-h^2/2}}{m!} 
  \left(\Dfrac{h^2}{2}\right)^m\left(1 + O\Bigl(\Dfrac{h^4+\log^2 n}{n}\Bigr)\right).
\]
\end{lemma}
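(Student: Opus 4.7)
The plan is to apply the method of factorial moments. Write $W := |H_1 \cap H_2^*|$ and $T_k := \mathbb{E}[(W)_k]$ for the descending factorial moments. By averaging over the uniform random relabelling,
\[
T_k = \sum_{(e_1,\ldots,e_k)} \frac{(n-v(E))!\,N(E;H_2)}{n!},
\]
where the sum is over ordered $k$-tuples of distinct edges of $H_1$, $v(E)$ denotes the number of vertices spanned by $E := \{e_1,\ldots,e_k\}$, and $N(E;H_2)$ counts the injective label-respecting homomorphisms of $E$ into $H_2$. The inversion identity
\[
\mathbb{P}(W=m) = \sum_{k\ge m}\frac{(-1)^{k-m}\,T_k}{(k-m)!\,m!}
\]
then reduces the problem to sharp estimates of each $T_k$ together with careful treatment of the resulting alternating sum.

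The dominant part of $T_k$ comes from tuples whose edges form a matching ($v(E)=2k$). In an $h$-regular host the number of ordered $k$-matchings is $D_k(H) = (nh/2)^k\bigl(1+O(k^2h/n)\bigr)$, and a direct evaluation of the matching contribution gives $(h^2/2)^k\bigl(1+O(k^2/n)\bigr)$. The remaining tuples are stratified by the defect $s := 2k - v(E) \ge 1$; one uses the general bound $n^{c(F)}h^{v(F)-c(F)}$ on the number of injective homomorphisms from a graph $F$ (with $c(F)$ components and $v(F)$ vertices) into an $h$-regular host, together with a careful enumeration of the possible defect patterns---cherries, paths of length three, triangles, and combinations thereof---to show that the total defect contribution is bounded by $(h^2/2)^k\cdot O((k^2+h^4)/n)$. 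This yields $T_k = (h^2/2)^k\bigl(1 + \epsilon_k\bigr)$ with $|\epsilon_k| = O((k^2+h^4)/n)$ uniformly for $k$ in the relevant range.

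Substituting this expansion into the inversion formula and writing $\lambda = h^2/2$, one obtains
\[
\mathbb{P}(W=m) - \frac{e^{-\lambda}\lambda^m}{m!} = \frac{1}{m!}\sum_{k\ge m}\frac{(-1)^{k-m}\lambda^k\epsilon_k}{(k-m)!} + (\text{truncation tail}).
\]
The principal obstacle is extracting the claimed $O((h^4+\log^2 n)/n)$ multiplicative error from this alternating series: a naive bound by $\max_k|\epsilon_k|$ produces an $e^{\lambda}$-amplification that can be as large as $n^{o(1)}$ and would destroy the target. To overcome this I would expand each $\epsilon_k$ as a polynomial in $k$ (the $k^2$ and $h^4$ pieces coming with explicit coefficients from the matching and defect calculations) and evaluate the inversion sums $\sum_k (-1)^{k-m}\lambda^k k^p/((k-m)!\,m!)$ exactly; every such sum equals a polynomial in $m$ and $\lambda$ times the Poisson probability $e^{-\lambda}\lambda^m/m!$, and on the range $m \le \max\{8h^2,\log n\}$ these polynomials are of size $O(h^4+\log^2 n)$. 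The tail $\sum_{k>K}$ with $K \asymp m + C(h^2+\log n)$ is controlled by a Bonferroni-type estimate combined with the Poisson decay and is negligible.
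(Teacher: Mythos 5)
Your proposal takes a genuinely different route from the paper's. The paper simply assembles three bounds already established (by switching arguments) in Hasheminezhad--McKay~\cite{HM1}: a tail bound $\Pr(|H_1\cap H_2^*|>\max\{8h^2,\log n\})=O(h^4/n)$, the base case $\Pr(|H_1\cap H_2^*|=0)=e^{-h^2/2}(1+O(h^4/n))$, and the one-step ratio $\Pr(|H_1\cap H_2^*|=m)/\Pr(|H_1\cap H_2^*|=m-1)=\frac{h^2}{2m}\bigl(1+O(\frac{h^2}{n}+\frac{m}{n})\bigr)$ for $1\le m\le\max\{8h^2,\log n\}$. Telescoping the ratio accumulates a relative error of only $O\bigl(\sum_{j\le m}(h^2/n+j/n)\bigr)=O((mh^2+m^2)/n)=O((h^4+\log^2 n)/n)$, which is exactly the stated rate. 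In contrast, you attempt a from-scratch proof via factorial moments and inclusion--exclusion inversion.

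The crucial gap is in controlling the inversion sum. Writing $\lambda=h^2/2$ and $T_k=\lambda^k(1+\epsilon_k)$, the error after inversion is
\[
\frac{1}{m!}\sum_{k\ge m}\frac{(-1)^{k-m}\lambda^k\epsilon_k}{(k-m)!},
\]
and you correctly identify that a crude bound $\max_k|\epsilon_k|$ incurs an $e^{\lambda}$ amplification. However, you underestimate its size: under the hypothesis $h=o(n^{1/4})$ one only has $\lambda=o(\sqrt{n})$, so $e^\lambda$ can be as large as $e^{o(\sqrt n)}$, not the $n^{o(1)}$ you state. Your proposed remedy --- extracting the explicit polynomial-in-$k$ part of $\epsilon_k$ and evaluating those inversion sums in closed form --- does correctly kill the leading contributions (and your structural claim that $\sum_j\frac{(-\lambda)^j}{j!}(j+m)^p=e^{-\lambda}\cdot(\text{polynomial in }m,\lambda)$ is right). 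But whatever remainder $R_k:=\epsilon_k-(\text{polynomial in }k)$ is left over still enters the truncated sum with a factor $\sum_{j\le K-m}\lambda^j/j!$, which is of order $e^\lambda$ once $K-m\gtrsim\lambda$ (as your truncation choice requires for the Bonferroni tail to be negligible). To win, you would need $\max_k|R_k|\lesssim e^{-\lambda}(h^4+\log^2 n)/n$, a super-polynomially small bound that no finite polynomial expansion of the defect enumeration can deliver when $h^2\gg\log n$. So the polynomial-extraction device fixes the explicit terms but not the remainder, and the argument as sketched does not close.

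The paper's ratio route sidesteps this entirely: it never forms an alternating sum, so the error accumulates multiplicatively over $m$ steps rather than being exponentially amplified. If you want a moment-based proof, you would either need to restrict to $h^2=O(\log n)$ (where $e^\lambda=n^{O(1)}$ and a sufficiently high-order expansion of $\epsilon_k$ could in principle suffice), or switch to a method without the inclusion--exclusion amplification, such as Chen--Stein Poisson approximation or a direct comparison of consecutive probabilities as in~\cite{HM1}.
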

\begin{proof}
This is proved in \cite{HM1} but not explicitly stated there.
It is stated in~\cite[Lemma~2.1]{HM1} that
\[
   \mathbb{P}\bigl(|H_1\cap H_2^*|>\max\{8h^2,\log n\}\bigr) = O\Bigl(\Dfrac{h^4}{n}\Bigr),
\]
while~\cite[Theorem 2.5]{HM1} says
\[
    \mathbb{P}\bigl(|H_1\cap H_2^*|=0\bigr) = e^{-h^2/2}\Bigl(1+O\Bigl(\Dfrac{h^4}{n}\Bigr)\Bigr),
\]
and~\cite[Lemma 2.2]{HM1} says, for $1\le m\le \max\{8h^2,\log n\}$, that
\[
  \frac{\mathbb{P}\bigl(|H_1\cap H_2^*|=m\bigr)}{\mathbb{P}\bigl(|H_1\cap H_2^*|=m-1\bigr)}
  = \Dfrac{h^2}{2m}\Bigl(1+O\Bigl(\Dfrac{h^2}{n}+\Dfrac mn\Bigr)\Bigr).
\]
The lemma now follows from combining these three bounds.
\end{proof}
The intersection of $H_1$ and $H_2^*$ is typically small, as shown by the following lemma.

% \bdm{Extra factors of 2 are now in the following due to the overcount Misha found.}

\begin{lemma}\label{lem:common}
Let
\[ m_{\alpha} := \Dfrac{\alpha h^2 n}{2(n-h-1)}, \qquad\text{where $\alpha\ge e$}.\]
Then the probability that $H_2^*$ has $m_\alpha$ or more
edges in common with~$H_1$ is
\[
\mathbb{P}(|H_1\cap H_2^*|\geq m_{\alpha})\leq 2\,\left( \Dfrac e\alpha\right)
^{\! \textstyle\bigl\lfloor\frac{(\alpha-1)hn}{4(n-h-1)}\bigr\rfloor}.
\]
\end{lemma}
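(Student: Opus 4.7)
The plan is to apply Markov's inequality to the $k$-th factorial moment of $X := |H_1 \cap H_2^*|$ for the specific choice $k := \lfloor (\alpha-1)hn/(4(n-h-1)) \rfloor$. Markov's inequality applied to $\binom{X}{k}$ yields
\[
\Pr(X \geq m_\alpha) \leq \frac{\E\binom{X}{k}}{\binom{m_\alpha}{k}},
\]
and the goal is to show the right-hand side is at most $(e/\alpha)^k$; the leading factor of $2$ in the stated bound absorbs both the rounding introduced by $\lfloor\cdot\rfloor$ and the trivial case $k=0$ (where the claim reads $\Pr \leq 2$).

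The key estimate (Step 1) is the factorial moment bound
\[
\E\binom{X}{k} \leq \frac{1}{k!}\left(\frac{h^2 n}{2(n-h-1)}\right)^{\!k}.
\]
To obtain this, interpret $\binom{X}{k}$ as the number of $k$-subsets $T \subseteq E(H_2)$ with $\pi(T) \subseteq E(H_1)$, so that
\[
\E\binom{X}{k} = \sum_{T \subseteq E(H_2),\, |T| = k} \Pr(\pi(T) \subseteq E(H_1)) = \sum_{T}\frac{N(T, H_1)}{n_{(v(T))}},
\]
where $v(T)$ is the number of vertices spanned by $T$ and $N(T,H_1)$ counts injective embeddings of $T$ into $H_1$. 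I would prove the displayed bound via a greedy reveal: fix an ordering $e_1, \ldots, e_k$ of $T$ and expose $\pi(e_1), \ldots, \pi(e_k)$ sequentially. Each edge $e_j$ has $v_j \in \{0, 1, 2\}$ endpoints not previously fixed, and using the $h$-regularity of $H_1$, the conditional probability that $\pi(e_j) \in E(H_1)$ is controlled by a factor of $h/(n-h-1)$ per newly introduced endpoint (worst case: at most $h$ permissible images among the $n-h-1$ remaining positions). Summing over the unordered $T$'s and dividing by $k!$ for the edge ordering yields the claimed bound.

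For Step 2, using $\binom{m_\alpha}{k} \geq (m_\alpha/k)^k$, the inequality $k! \geq (k/e)^k$, and the identity $m_\alpha/\alpha = h^2n/(2(n-h-1))$, we conclude
\[
\frac{\E\binom{X}{k}}{\binom{m_\alpha}{k}} \leq \frac{(m_\alpha/\alpha)^k}{k!(m_\alpha/k)^k} = \frac{k^k}{\alpha^k k!} \leq \left(\frac{e}{\alpha}\right)^{\!k}.
\]

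The main obstacle is the careful derivation of the per-endpoint factor $h/(n-h-1)$ in Step 1. When $e_j$ has two new endpoints, the conditional probability takes the form $|\{H_1\text{-edges with both endpoints available}\}|/\binom{n-|V_{j-1}|}{2}$, and squeezing this into a product of single-vertex factors requires delicate bookkeeping of the available positions and a case split on the overlap pattern within $T$. Dense substructures of $T$ (where many edges share vertices) present an additional challenge: for such $T$, both $N(T,H_1)$ and $n_{(v(T))}$ are small, and a combined combinatorial estimate using the degree sequence of $T$ and the $h$-regularity of $H_1$ is needed to verify that the total contribution remains within the target bound.
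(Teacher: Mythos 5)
Your plan is a genuinely different route from the paper's. The paper proves Lemma~\ref{lem:common} by a switching argument: it defines forward/reverse switchings on labellings of $H_2$, counts them ($\geq 2i(n-h-1)$ forward from level $i$, $\leq h^2 n$ into level $i$, with maximum drop $K=2h$ per switching), and feeds these counts into the black-box tail inequality from~\cite{HM2010} (Corollary~1, restated as Lemma~\ref{lem:cor1}). That lemma converts purely local ratio information into the exponential tail bound; no global moment of $X=|H_1\cap H_2^*|$ is ever computed. You instead aim for the tail bound via Markov's inequality applied to $\binom{X}{k}$, with a factorial-moment estimate as the engine. It is not a coincidence that both routes land on $(e/\alpha)^k$ with the same $k$: the switching ratios encode essentially the same Poisson-type decay that a factorial-moment bound would certify. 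If your Step~1 held, you would even shave the paper's leading constant~$2$ (which the paper inherits from the $\frac{1}{1-\rho/m}$ term in Lemma~\ref{lem:cor1}).

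The gap, which you flag yourself, is that Step~1 is not proved, and the greedy-reveal heuristic as sketched does not deliver the stated constant. When you expose $\pi$ along a BFS order of $V(T)$, the $j$-th newly placed vertex contributes a conditional factor at most $h/(n-j+1)$, not $h/(n-h-1)$; here $j$ can be as large as $|V(T)|\le 2k$, and $k=\bigl\lfloor\frac{(\alpha-1)hn}{4(n-h-1)}\bigr\rfloor$ typically exceeds $h$ (it is about $(\alpha-1)h/4$ and can reach order $n$). So the bare product of per-vertex factors overshoots $\bigl(\frac{h}{n-h-1}\bigr)^{\#\text{constrained vertices}}$. Rescuing the bound requires pairing the embedding-probability estimate with a compensating refinement in the count of the corresponding $k$-subsets $T$ (the shrinking denominators in $n_{(v(T))}$ must be offset by shrinking numerators in the count of available $T$'s), and then a separate accounting for $T$ with shared vertices, where the geometry of $T$ enters through its degree sequence and cycle structure. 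You correctly identify this as the hard part, but no estimate is supplied, and the assertion $\E\binom{X}{k}\le\frac{1}{k!}\bigl(\frac{h^2 n}{2(n-h-1)}\bigr)^k$ is left unverified precisely in the regime where it matters. As written, the proposal establishes the easy reduction (Step~2) but not the lemma. The paper's switching route is preferable exactly because the local counts it needs are immediate, whereas the uniform-in-$k$ moment bound you need is a nontrivial statement in its own right.
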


\begin{proof}
Define a directed graph $G$ with vertices $v_0,v_1,\ldots\,$, with
$v_i$ representing the set $\mathcal{S}_i$ of labellings of $H_2$ that have exactly
$i$ edges in common with~$H_1$.
% Let $C(i)=\card{\mathcal{S}_i}/n!$ for each~$i$.
For a permutation $\sigma\in S_n$, let $H^\sigma_2$ denote the isomorphic copy 
of $H_2$ in which each vertex~$v$ has been relabelled in accordance with the permutation $\sigma$. % $v^\sigma$.

Now we define a switching on the multiset of all $n!$ labellings of~$H_2$.
Assume $H^\sigma_2\in\mathcal{S}_i$. Choose an edge
$uw$ in common between $H_1$ and $H^\sigma_2$, choose one end, say~$u$,
choose a vertex~$z$ which is neither~$u$ nor adjacent to $u$ in $H_2^\sigma$
and apply the
transposition $(u\,z)$ to $H^\sigma_2$. Now $uw$ is no longer a common edge.
Note that other common edges can be created or destroyed, but that
doesn't matter: all we need to notice is that at most $2h$ common
edges can be lost (the edges incident with~$u$ or~$z$ in~$H_2^\sigma$).
If $H_1$ and $H_2^\sigma$ have $i$ common edges, this operation can
be performed in exactly $ia$ ways, where 
\[ 
a:=2(n-h-1).
\]

For the reverse switching, choose an edge $uw$ of $H_1$ that is not in $H_2^\sigma$,
choose one end,
say~$u$, choose a neighbour $z$ of~$w$ in $H_2^\sigma$,
and apply the transposition $(u\,z)$ to~$H_2^\sigma$. 
Now $uw$ is a common edge.  This can be done
in at most 
\[
b := h^2n
\] 
ways (we only need an upper bound this time). 
As before, other common edges can be created or lost but we don't care.

The digraph $G$ has an edge $v_iv_j$ whenever there is a switching that takes
a labelling in $\mathcal{S}_i$ to one in~$\mathcal{S}_j$. Loops are
possible, but $v_0$ is the unique sink.
We shall apply~\cite[Corollary~1]{HM2010}. For convenience, let us state it here:	
\begin{lemma}[{\cite{HM2010}, Corollary 1}]\label{lem:cor1}
Let $G$ be a directed graph with vertices $v_0,v_1,\ldots$
such that $v_0$ is the unique sink.
Let $C:V(G)\to\mathbb{R}_+$ be a non-negative function such that
$\sum_{i\geq 0} C(v_i) = 1$. Suppose that each edge $v_iv_j$ has
$j-i\ge -K$ for some integer $K>0$. 
Suppose that some real quantities $\{ x(v_iv_j) \mid v_iv_j\in E(G)\}$ satisfy
the following inequalities for some $\rho>0:$
\begin{align*}
	x(v_iv_j) &\ge 0, &&\text{for all $v_iv_j\in E(G)$}; \\[0,5ex]
	\sum_{j\st v_iv_j\in G} \frac{\rho}{i} x(v_iv_j)&\ge C(v_i),
	&&\text{for all $i\ge 1$}; \\[-0.5ex]
	\sum_{j\st v_jv_i\in G} x(v_jv_i) &\le C(v_i), &&\text{for all $i$}.
\end{align*}
Then for any integer $m>\max\{\rho,K-1\}$,
\[ \sum_{i\ge m} C(v_i) \le \Dfrac{1}{1-\rho/m} (e\rho/m)^k, \]
where $k=\lfloor (m + \min\{0,K-\rho-1\})/K\rfloor$.
\end{lemma}

To apply Lemma~\ref{lem:cor1}, we can make the following interpretations.
Let $s(v_iv_j)$ be the total number of switchings that start in $\mathcal{S}_i$ and
end in~$\mathcal{S}_j$.
By our counting, we have
\begin{align*}
\sum_{j:v_iv_j\in G} s(v_iv_j) &\ge ia \,|\mathcal{S}_i|, &&\text{for all $i\ge 1$;} \\[-0.5ex]
\sum_{j:v_jv_i\in G} s(v_jv_i) &\le b \,|\mathcal{S}_i|, &&\text{for all $i.$}
\end{align*}
Now, for all $i,j$, substitute $C(v_i):=|\mathcal{S}_i|/n!$,
$x(v_iv_j):=s(v_iv_j) / (b n!)$, and
\[ 
\rho := \Dfrac ba = \Dfrac{h^2 n}{2(n-h-1)}.
\]
Together with $K=2d_1$, this gives the conditions of Lemma~\ref{lem:cor1}.
%	In Lemma~\ref{lem:cor1}, we can
%	interpret the quantity $x(v_jv_i)$ as follows: $x(v_jv_i)$ is the number
%	of reverse switchings from labellings in $\mathcal{S}_i$ to labellings
%	in $\mathcal{S}_j$, as a fraction of~$C(v_i):=C(i)$.
%	Together with $K=2d_1$, this 
%	\mi{Explain more the next sentence. It requires some thinking and experience with switchings.}
%	The factor $\rho/i$ comes from the ratio of reverse switchings to
%	forward switchings. Set $K=2d_1$. 

We have $m_{\alpha}>\max\{\rho,K-1\}$. Also
\[ 
k\geq\biggl\lfloor\Dfrac{m_{\alpha}-\rho}{K}\biggr\rfloor
= \biggl\lfloor\Dfrac{(\alpha-1)hn}{4(n-h-1)}\biggr\rfloor,
\quad 1-\Dfrac{\rho}{m_{\alpha}} >\dfrac12
\quad\text{and}\quad  \Dfrac{e\rho}{m_{\alpha}}=\Dfrac{e}{\alpha}.
\]
Lemma~\ref{lem:common} follows.
\end{proof}

\subsection{Completing the proof of Theorem \ref{T:main1}(iii)}\label{S-main-2}
For Theorem \ref{T:main1}(iii), we use the results of Section \ref{S:common-sparse} with $h=d_1$. In particular,
recall that $H_2^*$ is a randomly relabelled copy of $H_2$.
Starting from \eqref{eq:E-P}, we get that
\begin{align}
\Dfrac{\mathbb{E} \,|\mathcal{R}_{d_1}(\G_d)|^2}
{\bigl(\mathbb{E}\, |\mathcal{R}_{d_1}(\G_d)|\bigr)^2}
&= \Dfrac{\sum_{H_1,H_2\in\mathcal{R}_{d_1}(K_n)}  P_d(H_1\cup H_2)}
{\bigl(\sum_{H\in\mathcal{R}_{d_1}(K_n)} P_d(H)\bigr)^2} \notag
\\
&=
\Dfrac{\sum_{H_1,H_2\in\mathcal{R}_{d_1}(K_n)} \E P_d(H_1\cup H_2^*)}
{\sum_{H_1,H_2\in\mathcal{R}_{d_1}(K_n)}  P_d(H_1)P_d(H_2)}. \label{eq:starting}
\end{align}

Let $H_1\neq H_2$ %\in\mathcal{R}_{d_1}(K_n)$ 
be two $d_1$-regular graphs on $[n]$. Let $m_i$ be the number of edges adjacent to the vertex 
$i$ in $H_1\cap H_2$, and $m=\frac{1}{2}\sum_i m_i$ be the total number of edges in $H_1\cap H_2$.
From Theorem~\ref{th:GIM} (its assumptions hold due to \eqref{ass_impl}), we get that
\begin{align*}
P_d(H_1\cup H_2)&=
\Bigl(\Dfrac{d}{n-1}\Bigr)^{d_1n-m} \\
&\times
\exp\biggl(\Dfrac{n-1-d}{4d}
\biggl(-4d_1^2+2\left(2d_1-\Dfrac{2m}{n}\right)+\Dfrac{4m^2}{n^2}
-2\sum_{i=1}^n\Dfrac{m_i^2}{n}\biggr)+o(n^{-1/4})\biggr).
\end{align*} 
%$$
% \Pr(H_i\subset\G_d)= \left(\frac{d}{n-1}\right)^{d_1n/2}
% \exp\left(\frac{n-1-d}{4d} \left(-d_1^2+2d_1\right)+o(n^{-1/4})\right).
%$$
Using the second part of Theorem~\ref{th:GIM},  we estimate 
\[
\Dfrac{P_d(H_1\cup H_2)}{P_d(H_1)P_d(H_2)} \leq 
\left(\Dfrac{n-1}{d}\right)^{|H_1\cap H_2|}
\exp\Bigl(-\Dfrac{(n-1-d)d_1^2}{2d} +o(n^{-1/4})\Bigr).
%\leq  \left(\dfrac{n-1}{d}\right)^{|H_1\cap H_2|}.
\]
Therefore, combining the above bound and \eqref{eq:starting}, we get that
\begin{align*}
&\Dfrac{\mathbb{E}\, |\mathcal{R}_{d_1}(\G_d)|^2}
{\left(\mathbb{E}\, |\mathcal{R}_{d_1}(\G_d)|\right)^2}
=  \frac{\sum_{H_1,H_2\in\mathcal{R}_{d_1}(K_n)}\sum_{m=0}^{nd_1/2} \Pr(H_1\cup H_2^* \mid H_1 \cap H_2^* =m) \, \Pr (H_1 \cap H_2^* =m)}{\sum_{H_1,H_2\in\mathcal{R}_{d_1}(K_n)}  P_d(H_1)P_d(H_2)}
\\
&\qquad \leq 
\exp\left(-\Dfrac{(n-1-d)d_1^2}{2d} +o(n^{-1/4}) \right)
\sum_{m =0}^{nd_1/2}  \left(\Dfrac{n-1}{d}\right)^{m} \max_{H_1,H_2 \in \calR_{d_1}(K_n)} \Pr(|H_1 \cap H_2^*|=m).
\end{align*}
Using 
Lemma~\ref{overlap}, the contribution of the terms with $m< \log n$ to the sum above is bounded by
\[
(1+o(1))
\sum_{m< \log n }\Dfrac{e^{-d_1^2/2}}{m!} \Bigl(\Dfrac{d_1^2}{2}\Bigr)^m\Bigl(\Dfrac{n-1}{d}\Bigr)^m
\leq  (1+o(1))
\exp\left( \Dfrac{(n-1)d_1^2}{2d} - \Dfrac{d_1^2}{2}  \right). 
\]
Using Lemma~\ref{lem:common}
and the assumption that 
$d_1^2 \left(\dfrac{n}{d_2}\right)^{2d_1}\leq \frac13 \log n$, the contribution of the remaining terms with $m \geq \log n$ 
is bounded by 
\begin{align*}
\sum_{m\geq \log n }\biggl( \Dfrac {(e+o(1)) d_1^2}{m}\biggr)^{\frac{m}{2d_1}}
\left(\Dfrac{n-1}{d}\right)^m &=
\sum_{m \geq \log n} \biggl( \Dfrac {e+o(1) }{3}\biggr)^{\frac{m}{2d_1}} 
\left(\Dfrac{3d_1^2}{\log n}\right)^{\frac{m}{2d_1}}\left(\Dfrac{n-1}{d}\right)^m 
\\
&\leq \sum_{m \geq \log n} \biggl( \Dfrac {e+o(1)}{3}\biggr)^{\frac{m}{2d_1}}   =o(1).
\end{align*}
To derive the last equality, we 
used $d_1 = O(\sqrt{\log n})$, which is implied by our assumption $d_1^2 \left(\dfrac{n}{d_2}\right)^{2d_1}\leq \frac13 \log n$.
Combining the above, we find that
\begin{align*}
\Dfrac{\mathbb{E}\, |\mathcal{R}_{d_1}(\G_d)|^2}
{\left(\mathbb{E}\, |\mathcal{R}_{d_1}(\G_d)|\right)^2} &\leq   \exp\left(-\Dfrac{(n-1-d)d_1^2}{2d} +o(n^{-1/4}) \right) 
\left( (1+o(1))
\exp\Bigl( \Dfrac{(n-1)d_1^2}{2d} - \Dfrac{d_1^2}{2}  \Bigr) +o(1)\right)
\\&=1+o(1).
\end{align*}
%since $\frac{(n-1)d_1^2}{2d}-\frac{d_1^2}{2}<\frac{nd_1^2}{d_2}\leq\sqrt{\log n}$.
Thus we have established \eqref{ii-sufficient}, which completes the proof.

\nicebreak

\end{document}